\renewcommand{\geq}{\geqslant}
\renewcommand{\leq}{\leqslant}
\newcommand{\qf}[1]{{\langle{#1}\rangle}} 
\newcommand{\C}{\mathcal{C}}
\newcommand{\co}{\mathcal {O}}
\newcommand{\s}{\sigma}
\newcommand{\mz}{\mathbb{Z}}
\newcommand{\HH}{\mathbb{H}}
\newcommand{\pfr}[1]{\mbox{$\langle\!\langle #1 ]]$}}
\newcommand{\qform}[1]{{\langle{#1}\rangle}} 
\DeclareMathOperator{\End}{End}
\DeclareMathOperator{\ad}{ad}
\DeclareMathOperator{\Int}{Int}
\DeclareMathOperator{\Ad}{Ad}
\DeclareMathOperator{\id}{Id}
\DeclareMathOperator{\Nrd}{Nrd}
\DeclareMathOperator{\Trd}{Trd}
\DeclareMathOperator{\Srd}{Srd}
\DeclareMathOperator{\Sym}{Sym}
\DeclareMathOperator{\Symd}{Symd}
\DeclareMathOperator{\Skew}{Skew}
\DeclareMathOperator{\Alt}{Alt}
\newcommand{\can}{\overline{\rule{2.5mm}{0mm}\rule{0mm}{4pt}}}
\DeclareMathOperator{\Spin}{Spin}
\DeclareMathOperator{\PGO}{PGO}
\newcommand{\pff}[1]{\mbox{$\langle\!\langle #1
\rangle\!\rangle $}}
\newtheorem{lem}{Lemma}[section]
\newtheorem{prop}[lem]{Proposition}
\newtheorem{thm}[lem]{Theorem}
\newtheorem{cor}[lem]{Corollary}
\theoremstyle{remark}
\newtheorem{remark}[lem]{Remark}
\newtheorem{defi}[lem]{Definition}
\newtheorem{ex}[lem]{Example}
\title[]{ The canonical quadratic pair on a Clifford algebra and triality}  
\author{Andrew Dolphin}
\address{Universiteit Antwerpen, Departement Wiskunde-Informatica, Middelheimlaan 1, 2020 Antwerpen, Belgium}
\email{Andrew.Dolphin@uantwerpen.be}
\author{Anne Qu\'eguiner-Mathieu}
\address{Universit\'e Paris 13\\
Sorbonne Paris Cit\'e\\
LAGA - CNRS (UMR 7539)\\
F-93430 Villetaneuse, France}
\email{queguin@math.univ-paris13.fr}
\thanks{}
\keywords{} 
\subjclass[2010]{}
\date{\today} 
\begin{document}

\begin{abstract}
We define a canonical quadratic pair on the Clifford algebra of an algebra with quadratic pair over a field. 
This allows us to extend to the characteristic $2$ case the notion of trialitarian triples, from which we derive a characterization of totally decomposable quadratic pairs in degree $8$. We also describe trialitarian triples involving algebras of small Schur index. 
\end{abstract}

\maketitle

\section{Introduction}

Triality is a phenomenon that arises due to the high level of symmetry in the Dynkin diagram $D_4$.
 This symmetry is reflected in objects associated to  groups of type $D_4$, such as  $8$-dimensional quadratic forms, and degree $8$ central simple algebras with orthogonal involution. 
 More precisely, consider a degree $8$ central simple algebra $A$ over a field $F$ of characteristic different from $2$. Assume $A$ is endowed with an orthogonal involution $\sigma_A$ with trivial discriminant. The Clifford algebra $\C(A,\s_A)$, with its canonical involution $\underline{\s_A}$,  is a direct product of two central simple algebras with involution, which also have degree $8$ and are of orthogonal type, so that we actually get a triple 
  \[\bigl((A,\sigma_A),(B,\sigma_B),(C,\sigma_C)\bigr),\]
  called a trialitarian triple, and an isomorphism 
\begin{equation}\label{equation:tri}
 (\C(A,\s_A), \underline{\s_A})\simeq (B,\s_B)\times (C,\s_C)\,.\tag{$\star$}\end{equation}
 By~\cite[\S 42]{KMRT}, triality then permutes the algebras with involution in this expression. That is, \eqref{equation:tri} induces isomorphisms
 \begin{eqnarray*}(\C(B,\s_B),\underline{\s_B})&\simeq& (C,\s_C)\times (A,\s_A)\\
 (\C(C,\s_C),\underline{\s_C})&\simeq& (A,\s_A)\times (B,\s_B)\,.
\end{eqnarray*} 
In particular, it follows that the Clifford algebra, viewed as an algebra with involution, is a complete invariant for orthogonal involutions with trivial discriminant on a degree $8$ algebra.  

This trialitarian relation has proven to be extremely fruitful; roughly speaking, triality plays the same role in degree $8$ as the so-called exceptional isomorphisms in smaller degree. For instance, it can be used to characterize totally decomposable orthogonal involutions on algebras of degree $8$ (see \cite[\S42.B]{KMRT} and connected problems in \cite{BPQM}). It is related to the  classification of groups of type $D_4$ (see \cite[\S44]{KMRT} and~\cite{Gar:triality}). It makes the degree $8$ case a crucial test case for some general questions on algebras with involution, see for instance~\cite[Thm.~5.2]{QT:conic} and~\cite[\S 4]{QT:Arason}. Finally, this better understanding of the degree $8$ case can in turn be used to answer questions in larger degree; for instance, it leads to an example of a degree $16$ non-totally decomposable algebra with involution that is totally decomposable after generic splitting of the underlying algebra, see~\cite[Thm.~1.3]{QT:Pfister}. 

For fields of characteristic 2, triality is not as well studied  due to complications arising when studying quadratic forms and orthogonal groups over these fields. 
In particular, the notions of symmetric bilinear forms and quadratic forms are no longer equivalent. Over such a base, the automorphism group of a bilinear form is not semisimple anymore, so the corresponding twisted objects, in particular orthogonal involutions, cannot be used to describe such algebraic groups. 
Twisted groups of type $D$ in characteristic $2$ were initially studied by Tits, who used so-called generalised quadratic forms (see \cite{Tits:genquadforms} or \S\ref{triples.sec}), which appear to be a good replacement for hermitian forms in this setting. 
Involution-like corresponding objects, namely quadratic pairs, were introduced later in~\cite[\S5]{KMRT}. They are related to generalised quadratic forms by an adjunction process, and behave better than generalised quadratic forms, for instance under scalar extention. They provide an appropriate tool to describe groups of type $D$ over a field of characteristic different from $2$. This theory is  developed in the Book of Involutions~\cite{KMRT}, where most of the material, about involutions and quadratic pairs, their invariants, and relations to algebraic groups, is developped over a field of arbitrary characteristic. 
However, Chapter X, about Trialitarian Central Simple Algebras, is one of the rare exceptions  in~\cite{KMRT}; the base field is assumed to be of characteristic not $2$ in that section. 

That the group $\mathrm{Spin}_8$ has this exceptionally large group of outer automorphisms is true independent of the characteristic of the underlying field, and hence some trialitarian relation should hold for quadratic pairs in characteristic $2$ also.
 One has a notion of a Clifford algebra of a quadratic pair, and we again  have that the Clifford algebra of a quadratic pair with trivial discriminant is the direct product of two degree $8$ central simple algebras with involution (see \cite[\S7 and \S8]{KMRT}). However, in order to fully  recapture the trialitarian relation, one also needs that the Clifford algebra be equipped with a canonical quadratic pair, not just a canonical involution.
A definition of this canonical quadratic pair is briefly sketched out in  \cite[p.149]{KMRT}, in the particular case where $A$ is split, of degree divisible by $8$, and endowed with a hyperbolic quadratic pair. From this, one can define a canonical quadratic pair in the more general case  via Galois descent. However this definition is not easy to use, and the lack of a  `rational'  definition, that is a definition that avoids the use of Galois descent, is one reason why  the results in \cite[Chapt.~X]{KMRT} are restricted to fields of characteristic different from $2$ (see  \cite[Chapt.~X, Notes]{KMRT} for more details). 

The main purpose of this paper is to provide a rational definition for the canonical quadratic pair of the Clifford algebra of an algebra with quadratic pair, see~\cref{def}. We use as a crucial tool the Lie algebra structures described in~\cite[\S 8.C]{KMRT}. We also provide an explicit description of this canonical quadratic pair in the split case in \cref{split.section}. With this in hand, we may extend to arbitrary fields the main results of~\cite[(\S 42)]{KMRT}. In particular, we define a notion of trialitarian triple, and describe the trialitarian action in~\cref{sec:tri}, and we characterize totally decomposable algebras with quadratic pair in degree $8$ in~\cref{thm:totdecomp}. The last section describes all trialitarian triples of small enough Schur index, see~\cref{triples.sec}. Partial results in this direction were previsouly obtained by Knus and Villa~\cite[\S\,7]{trialityquat}. 

We first recall some notation and basic results (\S 2.1 to 2.3), and make some preliminary observations on quadratic pairs and tensor products (\S~\ref{prod.section}). 

We are grateful to Seidon Alsaody, Philippe Gille and Jean-Pierre Tignol for insightful discussions. 

\section{Preliminaries}

Throughout the paper, $F$ is a field. 
We refer the reader to~\cite{pierce:1982} as a general reference on central simple algebras, \cite{KMRT} for involutions and quadratic pairs and~\cite{Elman:2008} for hermitian, bilinear and quadratic forms.

\subsection{Algebras with involution} 
\label{AI.sec}
Let $A$ be a central simple algebra of degree $n$ over $F$. 
To all $a\in A$, we associate its reduced characteristic polynomial 
$$ \mathrm{Prd}_{A,a}(X)= X^n- s_1(a)X^{n-1}+s_2(a)X^{n-2}-\ldots+(-1)^ns_n(a),$$ 
which is the characteristic polynomial of $a\otimes 1\in A\otimes_F \Omega\simeq M_n(\Omega)$, where $\Omega$ is an algebraic closure of $F$, see~\cite[\S 16.1]{pierce:1982}. The coefficients of  $\mathrm{Prd}_{A,a}$ belong to $F$; 
 $s_1(a)$ and  $s_n(a)$ are the {reduced trace} and the {reduced norm of $a$}, respectively denoted by $\Trd_A(a)$ and $\Nrd_A(a)$, and $s_2(a)$ is denoted by $\Srd_A(a)$.

All the involutions considered in this paper are $F$-linear. 
If the algebra $A$ is split, that is, $A\simeq \End_F(V)$, an $F$-linear involution on $A$ is the adjoint of a nondegenerate symmetric or skew-symmetric bilinear form $b:V\times V\rightarrow F$, uniquely defined up to a scalar factor. We denote this algebra with involution by $\Ad_b$. The involution is symplectic if $b$ is alternating, and orthogonal if $b$ is symmetric and non-alternating. 

Let $\sigma$ be an $F$-linear involution on $A$. 
We use the same notation as in~\cite[\S 2.A]{KMRT} for the subvector spaces $\Sym(A,\sigma)$, $\Symd(A,\sigma)$, $\Skew(A,\sigma)$ and $\Alt(A,\sigma)$ of symmetric, symmetrized, skew-symmetric and alternate elements, respectively.
Recall $\Sym(A,\sigma)=\Symd(A,\sigma)$ if the base field has characteristic different from $2$, while in characteristic $2$, $\Symd(A,\sigma)=\Alt(A,\sigma)$ is a strict subspace of $\Sym(A,\sigma)=\Skew(A,\sigma)$, and they have dimension $\frac{n(n-1)}{2}$ and $\frac{n(n+1)}{2}$, respectively.  
Still assuming the base field has characteristic $2$, one may prove that the involution $\sigma$ is symplectic if and only if $1$ is a symmetrised element, or equivalently all symmetric elements have reduced trace $0$~\cite[(2.6)]{KMRT}. In particular, in characteristic $2$, a tensor product of involutions with at least one symplectic factor always is symplectic.  
In characteristic different from $2$, a tensor product of involutions is symplectic if and only if there are an odd number of symplectic involutions in the product (see \cite[(2.23)]{KMRT}).

An $F$-quaternion algebra has a basis $(1,u,v,w)$ such that
$$u(1-u) =a, v^2=b\textrm{ and }w=uv=v(1-u)\,$$
for some  $a\in F$ with $4a\neq -1$  and $b\in F^\times$
 (see  \cite[Chap.~IX, Thm.~26]{Albert:1968}); any such basis is called a quaternion basis throughout this paper. 
 Conversely, for  $a\in F$ and $b\in F^\times$
 the above relations uniquely determine an $F$-quaternion algebra, which we denote by $H=[a,b)$. 
 If the characteristic of $F$ is different from $2$, substituting $i=u-\frac{1}{2}$ and $j=v$ give the more usual basis of $Q$, $\{1,i,j,ij\}$, with $i^2=c$, $j^2=d$ for $c,d\in F^\times$ and $ij=-ji$. In this case we denote $Q$ by $(c,d)$.
  
 Recall $H=[a,b)$ has a unique symplectic involution, called the canonical involution, which is determined by the conditions that $\overline{u}=1-u$ and $\overline{v}=-v$. 
Considering $H$ as a $4$-dimensional vector space over $F$, we may view $\Nrd_H$ as a $4$-dimensional quadratic form over $F$, which we call the norm form of $H$.

\subsection{Quadratic forms and their Clifford algebras} 

For $b\in F^\times$, we denote the $2$-dimensional  symmetric bilinear form $(x_1,x_2)\times (y_1,y_2)\mapsto x_1y_1+ bx_2y_2$
by $\qf{1,b}^{bi}$. 
For a nonnegative integer m, by an m-fold bilinear Pfister form, we mean a nondegenerate symmetric bilinear form isometric to a tensor product of $m$ binary symmetric bilinear forms representing $1$; we use the notation 
$\pff{b_1,\dots, b_m}\simeq\qf{1,-b_1}^{bi}\otimes\dots\otimes\qf{1,-b_m}^{bi}$. 

Let $q:\,V\rightarrow F$ be a quadratic form and denote  its polar form by $b_q$, defined by $b_q(x,y)=q(x+y)-q(x)-q(y)$, so that $b_q(x,x)=2q(x)$ for all $x\in V$. It is a symmetric  bilinear form over $V$, and further, it is alternating if $F$ is of characteristic $2$. The quadratic form $q$ is called nonsingular if its polar form is nondegenerate. In characterictic $2$, this implies it is hyperbolic~\cite[Prop. 1.8]{Elman:2008}.
For all $b_1,b_2\in F$, we let $[b_1,b_2]$ be the  quadratic form $(x,y)\rightarrow b_1x^2+xy+b_2y^2$.
This form is nonsingular if and only if $-1\neq 4 b_1b_2$.
Note that the hyperbolic plane $\HH=[0,0]$ satisfies $\HH\simeq [a,0]\simeq [0,a]$ for all $a\in F$, so for every nonsingular $2$-dimensional quadratic form we may chose a presentation $[a,b]$ with $a\not=0$. Any nonsingular quadratic form $\phi$ over $F$ has a Witt decomposition $\phi \cong \phi_{an} \perp i_W(\phi)\times \HH$,  where $\phi_{an}$ is the anisotropic part and the integer $i_W(\phi)$ is the Witt index of $\phi$.  

To a quadratic form $q:\,V\rightarrow F$ and a symmetric bilinear form $b:\,W\times W\rightarrow F$, one associates the quadratic form, denoted by $b\otimes q$ and defined on $W\otimes V$ by $$(b\otimes q)(w\otimes v)=b(w,w)q(v) \,\textrm{ for all $w\in W,v\in V$}\,. $$
(see~\cite[p.51]{Elman:2008}).
For any positive  integer $m$, by an $m$-fold quadratic Pfister form we mean a quadratic form that is isometric to the tensor product of an $(m-1)$-fold bilinear Pfister form and a nonsingular binary quadratic form representing $1$. We use the notation 
$\pfr{b_1,\dots,b_{m-1},c}=\pff{b_1,\ldots, b_{m-1}}\otimes [1,c]$.
Our definition is equivalent to the definition in~\cite[\S 9.B]{Elman:2008}, even though we use a different notation in characteristic different from $2$. 
 Pfister forms are either anisotropic or hyperbolic (see \cite[(9.10)]{Elman:2008}). 

Recall the Clifford algebra of a quadratic space $(V,q)$ is a factor of the tensor algebra $T(V)$ by the ideal  $I(q)$ generated by elements of the form $v\otimes v - q(v)\cdot 1$ for $v\in V$. 
It has a natural $\mathbb{Z}/2\mathbb{Z}$-gradation, and the subalgebra $\C_0(q)$ of degree $0$ elements is called the even Clifford algebra.
The identity map on $V$ extends to an involution on $\C(q)$ and $\C_0(q)$ called the canonical involution and denoted $\underline{\sigma_q}$.
If $q$ is nonsingular, the center of $\C_0(q)$ is a quadratic \'etale extension of $F$. It is determined by a class which belongs to the multiplicative group of square classes $F^\times/F^{\times 2}$ in characteristic different from $2$ and the additive group $F/\wp(F)$ in characteristic $2$, where $\wp(F)=\{a^2+a\mid a\in F\}$. In both cases, we will refer to this class as the discriminant of $q$ and denote it by $\Delta(q)$. If $F$ has characteristic $2$ and $q\simeq [a_1,{b_1}]\bot\ldots\bot [{a_n},b_n]$ for some $a_i,b_i$ in $F$, then $\Delta(q)$ is the class of $a_1b_1+\dots+a_nb_n$ in $F/\wp(F)$. 

\begin{ex} \label{ex:cliffex} 
For ease of reference in the sequel, we give an explicit description of the even Clifford algebra of a nonsingular quadratic form over a field of characteristic $2$. Given such a form $q$, with polar form $b$, pick a decomposition \[q \simeq [a_1,b_1]\perp\ldots\perp [a_m,b_m],\] and let $(e_i,e_i')_{1\leqslant i \leqslant m}$ be the corresponding symplectic basis of the underlying vector space $V$. That is,  for all $i$ with $1\leqslant i \leqslant m$, we have  $q(e_i)=a_i$, $q(e_i')=b_i$, $b(e_i,e'_i)=1$ and $b(e_i,e_j)=b(e'_i,e'_j)=b(e_i,e'_j)=0$ for all $i\neq j$. We may assume $a_i\not=0$ for all $i$. 

The elements $u_i= e_ie'_i$ and  $v_i=e_ie_m$, for $i\in\{1,\dots, m-1\},$
belong to the even part $\C_0(q)$ of the Clifford algebra, and satisfy 
\[u_i(1+u_i)= a_ib_i,\ v_i^2  =  a_ia_m,\ u_iv_i=v_i(1+u_i).\]
They generate pairwise commuting quaternion subalgebras. Further, we have that 
$$\underline{\sigma_q}(u_i) = 1+u_i \quad \textrm{ and }\quad \underline{\sigma_q}(v_i)=v_i\,.$$

We have that
 \[\xi= \sum_{i=1}^m e_ie'_i\] also belongs to $\C_0(q)$, commutes with $u_i$ and $v_i$ for all $i$, $1\leq i\leq m-1$, and satisfies 
$ \xi^2 = \xi +\Delta(\rho)\,.$
Hence, $F[\xi]$ is a quadratic \'etale extension of $F$, central in $\C_0(q)$. 
We also  have that 
$$\underline{\sigma_q}(\xi)=\left\{\begin{array}{l}1+ \xi \text{ if }m\text{ is odd,}\\\xi\text{ if }m\text{ is even.}\\\end{array}\right.$$
So we finally get
\[\bigl(\C_0(q),\underline{\sigma_q}\bigr)\simeq \otimes_{i=1}^{m-1}(Q_i,\can)\otimes (F[\xi],\gamma)\,,\] 
    where $Q_i=[a_ib_i,a_ia_m)$, $\can$ stands for the canonical involution, $F[\xi]$ is the centre of $\C_0(q)$, and $\gamma$ is the identity if $m$ is even and the non-trivial $F$-automorphism of $F[\xi]$ if $m$ is odd. 
 \end{ex}

We finish this subsection with a characteristic free version of \cite[(Example 9.12)]{Kneb} which we require in the sequel. The proof is similar, but we provide the full details for convenience.
\begin{prop}\label{prop:8dim}
Let $q$ be an $8$-dimensional nonsingular quadratic form over $F$ with trivial discriminant  and $\mathrm{ind}(\C_0(q))\leqslant 2$. Then there exists a $4$-dimensional symmetric bilinear form $B$ and a $2$-dimensional nonsingular form $\phi$ over $F$ such that $q\simeq B\otimes \phi$. 
\end{prop}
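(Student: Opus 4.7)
My approach would follow the characteristic-different-from-$2$ proof of Knebusch~\cite{Kneb}, adapted to work uniformly. The strategy is to identify a suitable $1$-fold quadratic Pfister form $\phi$ dividing $q$, produced via the Clifford algebra structure. Starting from a diagonalization $q \simeq [a_1, b_1] \perp \cdots \perp [a_4, b_4]$ with all $a_i \neq 0$, \cref{ex:cliffex} combined with $\Delta(q) = 0$ identifies, in characteristic~$2$,
\[\C_0(q) \simeq (Q_1 \otimes Q_2 \otimes Q_3) \times (Q_1 \otimes Q_2 \otimes Q_3),\]
where $Q_i = [a_i b_i, a_i a_4)$; note that here $F[\xi] \simeq F\times F$ because $m=4$ is even and $\Delta(q)=0$. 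In particular $\mathrm{ind}(\C_0(q)) = \mathrm{ind}(Q_1 \otimes Q_2 \otimes Q_3)$, so the hypothesis becomes $[Q_1]+[Q_2]+[Q_3] = [Q]$ in $\mathrm{Br}(F)$ for some quaternion algebra $Q$.

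I would then choose $\phi$ to be a nonsingular binary form representing~$1$ (a $1$-fold quadratic Pfister form $[1,c]$) whose function field $F(\phi) = F[\wp^{-1}(c)]$ is a separable quadratic extension splitting $Q$; such $\phi$ exists since any quaternion algebra $[a,b)$ in characteristic~$2$ is split by $F[\wp^{-1}(a)]$. Over $E := F(\phi)$, the Clifford algebra of $q_E$ is split, so $q_E$ has trivial discriminant and trivial Clifford invariant, placing $q_E$ in $I_q^3 E$. Since $\dim q_E = 2^3$, the Arason--Pfister Hauptsatz (Arason--Baeza in characteristic~$2$, see~\cite{Elman:2008}) implies that $q_E$ is either hyperbolic or isometric to an anisotropic $3$-fold quadratic Pfister form over $E$. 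The main obstacle I anticipate is ruling out the Pfister case: this should follow from the fact that the Brauer class $[Q]$ of index $\leq 2$ is split by many separable quadratic extensions, allowing $\phi$ to be refined (or $c$ to be modified) until $q_{F(\phi)}$ is actually hyperbolic, possibly via a Pfister-neighbor descent argument. Once $q_{F(\phi)}$ is hyperbolic, the subform theorem for quadratic Pfister forms (\cite[Thm.~23.4]{Elman:2008}, valid in characteristic~$2$ by Baeza) yields $q \simeq \tau \otimes \phi$ for a symmetric bilinear form $\tau$ over $F$, of dimension $4$ by comparison.

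A more concrete alternative, closer in spirit to Knebusch's original argument, would be to show directly that the quaternions $Q_1, Q_2, Q_3$ may be made to share a common first slot---in the sense that after suitably permuting and re-diagonalizing $q$, one may assume $a_1b_1 \equiv a_2b_2\equiv a_3b_3 \equiv a_4 b_4 \pmod{\wp(F)}$. Using the identity $[a,b] \simeq a\cdot[1,ab]$ in characteristic~$2$ (and the fact that $[a,b] \simeq [a, b+at^2+t]$ for any $t\in F$ leaves the Arf invariant fixed modulo $\wp(F)$), this would immediately give $q \simeq \langle a_1, a_2, a_3, a_4\rangle^{bi} \otimes [1,c]$ for the common Arf $c$. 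Proving the common-slot rearrangement from $\mathrm{ind}(Q_1\otimes Q_2\otimes Q_3)\leq 2$ is then the heart of the matter.
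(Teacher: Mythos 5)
The core step of your Approach 1 --- ruling out the case that $q_E$ is an anisotropic $3$-fold Pfister form over $E = F(\phi)$ --- is the gap, and you acknowledge it without closing it. Choosing $\phi$ so that $E$ merely splits $Q$ is not enough: the hypotheses of the proposition guarantee that \emph{some} binary form $\phi_0$ with $q \simeq B \otimes \phi_0$ exists (and one checks that $F(\phi_0)$ splits $Q$), but many other quadratic separable extensions of $F$ also split $Q$ without making $q$ hyperbolic, and nothing in your argument selects the right one. Pointing to a ``Pfister-neighbor descent'' or a refinement of $c$ is not a proof; for the statement to be nontrivial you need to produce $c$ satisfying \emph{two} conditions at once, and the harder of the two (isotropy of $q$ over $F[\wp^{-1}(c)]$) is exactly the one you do not address. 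The common-slot alternative in your second paragraph has the same status: you state that the common-slot rearrangement ``is the heart of the matter,'' i.e.\ it is the part you have not done, and indeed it is not a formal consequence of $\ind(Q_1\otimes Q_2\otimes Q_3)\leq 2$ by generalities about linked quaternion algebras.

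The paper's proof takes a different route that supplies precisely the missing isotropy input. It first observes that the anisotropic kernel of $q$ over $F(q)$ is at most an Albert form with Clifford invariant of index $\leq 2$, hence is isotropic by \cite[(16.5)]{KMRT}; this forces the first Witt index of $q$ to be at least $2$. One may then pick a separable quadratic extension $F(u)/F$ over which $q$ is isotropic, and \cite[(25.1)]{Elman:2008} upgrades the Witt index over $F(u)$ to $\geq 2$. Baeza's result now peels off a genuine $2$-fold quadratic Pfister factor, $q\simeq c\pff{b}\otimes[1,a]\perp q'$, and triviality of the discriminant makes $q'$ similar to a $2$-fold Pfister form $\pi$. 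A second application of the index-$\leq 2$ hypothesis forces $\pff{b}\otimes[1,a]$ and $\pi$ to have Witt index $\geq 2$ in their difference, and the common-slot lemma \cite[(24.2)]{Elman:2008} produces a \emph{shared} binary quadratic factor $[1,d]$ dividing both. This is how the paper manufactures the binary form you were trying to guess: it never has to decide in advance which separable quadratic extension will work, and never has to rule out the anisotropic-Pfister alternative, because the $2$-fold Pfister factor is extracted before the common-slot step, so the $3$-fold Arason--Pfister dichotomy never arises. If you want to keep the spirit of your argument you would need to fill in the selection of $c$ with the Witt-index estimate and the Baeza decomposition; at that point you have essentially reproduced the paper's proof.
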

\begin{proof}
Over its function field, $q$ is Witt equivalent to an Albert form, which is isotropic by~\cite[(16.5)]{KMRT}. Therefore, the first Witt index of $q$ is at least $2$. Choose a quadratic separable extension $F(u)/F$ where $u-u^2=a$ for some $a\in F$ with $-1\neq 4a$ such that $q$ becomes isotropic after extending scalars to $F(u)$. By~ \cite[(25.1)]{Elman:2008}, the Witt index of $q$ over $F(u)$ is at least $2$. 
Hence by \cite[Chapter V, (4.2)]{Baeza:1982}, we have that $q\simeq c\pfr{b,a}\perp q'$ for some $b,c\in F^\times$ and a $4$-dimensional  nonsingular quadratic form $q'$ over $F$. Since $q$ has trivial discriminant, it follows that $q'$ also have trivial discriminant and hence $q'$ is similar to a Pfister form, which we denote by $\pi$. The form $\pfr{b,a}\perp -\pi$ is Witt equivalent to an Albert form, and has the same Clifford invariant as $q$, of index $\leq 2$. Therefore, its Witt index is at least $2$, and by~\cite[(24.2)]{Elman:2008}, there exist symmetric bilinear forms $B'$ and $B''$ and $d\in F$ such that  $\pfr{b,a}\simeq B'\otimes [1,d]$ and $\pi\simeq B''\otimes [1,d]$. In particular, we have $q\simeq B\otimes [1,d]$ for some symmetric bilinear form $B$ over $F$. 

\end{proof}

\subsection{Quadratic pairs and their Clifford algebras}
In arbitrary characteristic, algebraic groups of type $D$ can be described in terms of quadratic pairs. 
For the reader's convenience, we recall here some basic facts on quadratic pairs which can be found in~\cite[\S 5,7.B, 8.B]{KMRT}, and which are used throughout the paper. 

A quadratic pair on a central simple algebra $A$ is a couple $(\sigma,f)$, where $\sigma$ is an $F$-linear involution on $A$, with $\Sym(A,\sigma)$ of dimension $\frac{n(n+1)}{2}$  and $f$ is a so-called semi-trace on $(A,\sigma)$. That is, $f$ is an $F$-linear map $f:\,\Sym(A,\sigma)\rightarrow F$ such that \[f(x+\sigma(x))=\Trd_A(x)\text{ for all }x\in A.\] In characteristic different from $2$, the dimension condition guarantees that the involution is of orthogonal type, and one may check that there is a unique semi-trace on $(A,\sigma)$ given  by $f(x)=\frac 12 \Trd_A(x)$ for all $x\in \Sym(A,\sigma)$. Therefore quadratic pairs and orthogonal involutions are equivalent notions when the  characteristic is not $2$. 
Conversely, in characteristic $2$, the existence of a semi-trace implies $\sigma$ is symplectic. Indeed, since $\Trd_A(c)=f(c+\sigma(c))=f(2c)=0$ for all $c\in \Sym(A,\sigma)$, the reduced trace vanishes on $\Sym(A,\sigma)$. 

One may easily check that for all $\ell\in A$ such that $\ell+\sigma(\ell)=1$, the $F$-linear map defined  by $f_\ell(s)=\Trd_A(\ell s)$ for all $s\in\Sym(A,\sigma)$ is a semi-trace on $(A,\sigma)$. Conversely, it is proved in~\cite[(5.7)]{KMRT} that any semi-trace $f:\Sym(A,\sigma)\rightarrow F$ coincides with $f_\ell$ for some  $\ell\in A$ satisfying $\ell+\sigma(\ell)=1$. We say that the element $\ell$ gives or determines the semi-trace $f_\ell$. Two distinct such elements $\ell$ and $\ell'$ determine the same semi-trace if and only if they differ by an alternating  element, that is $\ell-\ell'=x-\sigma(x)$ for some $x$ in $A$.

Let $(V,q)$ be a nonsingular quadratic space over the field $F$. The polar form $b_q$ of $q$ induces an involution $\sigma_q=\ad_{b_q}$ on $A=\End_F(V)$, and one may prove that $(\End_F(V),\sigma_q)\simeq (V\otimes V,\varepsilon)$, where $\varepsilon$ is the exchange involution, defined by $\varepsilon(x\otimes y)=y\otimes x$. Moreover, there exists a unique semi-trace $f$ defined on $\Sym(V\otimes V, \varepsilon)$ and satisfying $f(x\otimes x)=q(x)$ for all $x\in V$. 
Under the isomorphism above, $f$ defines a semi-trace $f_q$ on $(\End_F(V),\sigma_q)$. The quadratic pair $\ad_q=(\sigma_q,f_q)$ is called the adjoint of $q$, and we use the notation $\Ad_{q}$ for the algebra with quadratic pair $(\End_F(V),\sigma_q,f_q)$. As explained in~\cite[(5.11)]{KMRT}, any quadratic pair on a split algebra $\End_F(V)$ is the adjoint of a nonsingular quadratic form $q$ on $V$. 

Let $(A,\sigma,f)$ be an $F$-algebra with quadratic pair.  For further use, we briefly recall the definition of the discriminant and the Clifford algebra of 
$(A,\sigma,f)$, as featured in~\cite{KMRT}. Assume $A$ has even degree $n=2m$, and $\ell\in A$ determines the semi-trace $f$. 
If the characteristic of $F$ is different from $2$, the discriminant of $(\sigma,f)$ is given by  $$\textrm{disc}(\sigma, f)= (-1)^m\Nrd_A(a) \in F^\times /F^{\times 2}$$ for any element $a\in \Alt(A,\sigma)\cap A^\times$.
If $F$ is of characteristic $2$, the discriminant of $(\sigma,f)$ is given by 
$$ \textrm{disc}(\sigma, f)= \textrm{Srd}(\ell) +\frac{m(m-1)}{2} \in F/\wp(F)\,. $$
It extends the discriminant  invariant for quadratic forms. That is if $q$ is a nonsingular quadratic form, then
we have $\textrm{disc}(\ad_q)=\Delta(q)$. See \cite[\S7.B]{KMRT} for more details. 

Let $\mathrm{Sand}:\,\underline A\otimes\underline A\rightarrow \End_F(A)$ denote the sandwich linear map, as defined in~\cite[(3.4)]{KMRT}), where $\underline A$ denotes $A$ viewed as an $F$-vector-space. 
The Clifford algebra  $\C(A,\sigma,f)$ of the algebra with quadratic pair $(A,\sigma,f)$ is the factor of the tensor algebra $T(\underline{A})$:
$$\C(A,\sigma,f)= \frac{T(\underline{A})}{J_1({\sigma,f})+ J_2({\sigma,f})}$$
where 
\begin{itemize}
\item[$(1)$]
$J_1({\sigma,f})$ is the ideal generated by all the elements of the form $s-f(s)\cdot 1$ for $s\in \underline{A}$ such that $\sigma(s)=s$;
\item[$(2)$]
 $J_2(\sigma,f)$ is the ideal generated by all elements of the form $u-\mathrm{Sand}(u)(\ell)$   
for 
$u\in \underline{A}\otimes \underline{A}$  such that $\sigma_2(u)=u$ and 
where $\sigma_2$ is defined by the condition 
$$  \mathrm{Sand}(\sigma_2(u))(x) =\mathrm{Sand}(u)(\sigma(x)) \quad \textrm{for } u\in \underline{A}\otimes \underline{A},\  x\in\underline{A} \,.$$ 
\end{itemize}

The centre of the Clifford algebra of $(A,\sigma,f)$ is a quadratic \'etale extension, which is related to the discriminant as follows. In characteristic different from $2$, the centre of the Clifford algebra is given by $F\bigl({\sqrt{\mathrm{disc}(\sigma,f)}}\,\bigr)$ (see ~\cite[(8.25)]{KMRT}). In characteristic $2$, the centre is given by $F(u)$ where $u^2+u=\mathrm{disc}(\sigma,f)$ (see ~\cite[(8.27), (8.28)]{KMRT}). In either case, if the discriminant is trivial, this  quadratic \'etale extension is isomorphic to $F\times F$ and the Clifford algebra splits into two components. 

In addition, $\C(A,\sigma,f)$ is endowed with a canonical involution, denoted by $\underline\sigma$, and induced by the involution of $T(\underline A)$ acting as $\sigma$ on $\underline A$, that is $$ \underline{\sigma} (a_1\otimes \ldots \otimes a_r) = \sigma(a_r)\otimes \ldots \otimes \sigma(a_1)\quad \textrm{for all } a_1,\ldots, a_r\in\underline A\,.$$ 
The Clifford algebra of $(A,\sigma,f)$ extends the even Clifford algebra for quadratic spaces, that is if $(A,\sigma,f)\simeq\Ad_q$ for some nonsingular quadratic form $q$, there is a canonical isomorphism between $\C(A,\sigma,f)$ and $\C_0(q)$, and the canonical involution $\underline \sigma$ corresponds to $\underline{ \sigma_q}$ under this isomorphism. 
See \cite[\S8]{KMRT} for more details.

\subsection{Semi-traces and tensor products}\label{prod.section}
Consider an embedding \[i:\,(A,\sigma)\rightarrow(D,\rho),\] where $(A,\sigma)$ and $(D,\rho)$ are two algebras with symplectic involution. Any element $\ell\in A$ such that $\ell+\sigma(\ell)=1$ maps to an element $i(\ell)\in D$ such that $i(\ell)+\rho(i(\ell))=1$. In addition, symmetrised elements in $(A,\sigma)$ map to symmetrised elements in $(D,\rho)$. Therefore, to any semi-trace $f=f_\ell$ on $(A,\sigma)$, we may associate a well defined semi-trace $g=f_{i(\ell)}$ on $(D,\rho)$. Clearly, the semi-trace $g$ depends not only on $f$, but also on the embedding $i$. When $i$ is canonical, we   forget the embedding and use the same notation $f_\ell$ for both semi-traces. 
This correspondence is not  extending or restricting the semi-trace viewed as a map, even though $i$ maps symmetric elements in $(A,\sigma)$ to symmetric elements in $(D,\rho)$. 
For instance, if $D$ is $F$-central and $A$ has centre $Z_A$, then $f$ is $Z_A$-linear with values in $Z_A$, while $g$ is $F$-linear with values in $F$, and $Z_A$ may be strictly larger than $F$. We will refer to $g$ as the semi-trace induced by $f$ on $(D,\rho)$. 

Let $(B,\tau)$ be an algebra with involution, assumed to be orthogonal if $F$ is of characteristic different from $2$ and $(A,\sigma,f)$ an algebra with quadratic pair. 
The involution $\tau\otimes \sigma$  is then orthogonal if the characteristic of $F$ is different from $2$ and  symplectic otherwise. 
Therefore, the construction above applies to the canonical embedding $(A,\sigma)\subset (B\otimes A,\tau\otimes\sigma)$, so that $f$ induces a semi-trace $f_\star$ on $(B\otimes A, \tau\otimes \sigma)$. For all $b\in \Sym(B,\tau)$ and $a\in \Sym(A,\sigma)$, $b\otimes a\in\Sym(B\otimes A, \tau\otimes \sigma)$ and we have 
\[f_\star(b\otimes a)=\Trd_{B\otimes A}\bigl((1\otimes \ell) (b\otimes a)\bigr)=\Trd_B(b)\Trd_A(\ell a)=\Trd_B(b) f(a),\] 
where $\ell$ is an element defining the semi-trace $f$. 
In~\cite[(5.18)]{KMRT}, it is proved that this condition characterizes the semi-trace  $f_\star$. 
This construction defines a tensor product \[(B,\tau)\otimes(A,\sigma, f)=(B\otimes A,\tau\otimes \sigma,f_\star).\] 

One may check that this tensor product corresponds to the usual one in the split case, that is $\Ad_b\otimes \Ad_\rho=\Ad_{b\otimes\rho}$ for all nondegenerate symmetric bilinear forms $b$ and nonsingular quadratic forms $\rho$, see \cite[(5.19)]{KMRT}. 
In addition, it is associative, that is 
$$ \bigl((C,\gamma)\otimes (B,\tau)\bigr)\otimes(A,\sigma,f) \simeq  (C,\gamma)\otimes \bigl((B,\tau)\otimes (A,\sigma,f)\bigr),$$
for all algebra with involution $(C,\gamma)$, see~\cite[(5.3)]{dolphin:totdecomp}.
In particular 
we may write $(C,\gamma)\otimes(B,\tau)\otimes(A,\sigma,f)$ without any ambiguity. We say that $(A,\sigma,f)$ is totally decomposable if there exist $F$-quaternion algebras with involution $(Q_i,\sigma_i)_{1\leqslant i\leqslant n-1}$ and an $F$-quaternion algebra with quadratic pair $(Q_n,\sigma_n,g)$ such that \[(A,\sigma,f)\simeq \left(\bigotimes_{i=1}^{n-1}(Q_i,\sigma_i)\right)\otimes (Q_n,\sigma_n,g).\]

Consider now two algebras with symplectic involutions $(A,\sigma)$ and $(B,\tau)$. If the characteristic of $F$ is different from $2$, the involution $\tau\otimes \sigma$ is orthogonal, and hence there is a unique associated semi-trace. If the characteristic of $F$ is $2$, then we have $\Trd_B(b)=0$ for all $b\in\Sym(B,\tau)$ and the formula above shows that, given an arbitrary semi-trace $f$ on $\Sym(A,\sigma)$, the induced semi-trace $f_\star$ on $\Sym(B\otimes A,\tau\otimes\sigma)$ vanishes on \[\Sym(B,\tau)\otimes \Sym(A,\sigma)\subset \Sym(B\otimes A,\tau\otimes\sigma).\]
Again, this condition characterizes $f_\star$, see~\cite[(5.20)]{KMRT}, and in particular, $f_\star$ does not depend on the choice of $f$ when $\tau$ also is symplectic. We now extend this result to a product with $r$ factors. 
 \begin{prop}\label{canprod}
 Assume $F$ has characteristic $2$ and let
$(A_i,\sigma_i)_{1\leq i\leq r}$ be $r$ algebras with symplectic involutions for some $r\geq 2$. 
There exists a unique semi-trace $$f_\otimes:\Sym\left(\bigotimes_{i=1}^r(A_i,\sigma_i)\right)\rightarrow F$$ 
such that $$f_\otimes|_{\bigotimes_{i=1}^r \Sym(A_i,\sigma_i)} =0\,.$$ 
\end{prop}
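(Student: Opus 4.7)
The plan is to argue by induction on $r$, with the base case $r = 2$ being the result \cite[(5.20)]{KMRT} recalled just above the statement. Associativity of the quadratic-pair tensor product lets us write
\[
\bigotimes_{i=1}^r (A_i, \sigma_i) = (A', \sigma') \otimes (A_r, \sigma_r), \quad \text{where } (A', \sigma') := \bigotimes_{i=1}^{r-1}(A_i,\sigma_i),
\]
and a tensor product of involutions with at least one symplectic factor is symplectic in characteristic~$2$, so both $(A', \sigma')$ and $(A_r, \sigma_r)$ carry symplectic involutions. The two-factor case then produces a canonical semi-trace $f_\otimes$ on $(A,\sigma)$ that vanishes on $\Sym(A',\sigma') \otimes \Sym(A_r,\sigma_r)$, hence \emph{a fortiori} on the smaller subspace $\bigotimes_{i=1}^r \Sym(A_i,\sigma_i)$. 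This settles existence.

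For uniqueness, the key is the spanning identity $(\ast)$:
\[
\Sym(A,\sigma) = \Symd(A,\sigma) + \bigotimes_{i=1}^r \Sym(A_i,\sigma_i).
\]
Granted $(\ast)$, the difference of two semi-traces with the prescribed vanishing property is an $F$-linear map on $\Sym(A,\sigma)$ that vanishes on both summands on the right---it vanishes on $\Symd(A,\sigma)$ because any two semi-traces send $x + \sigma(x)$ to $\Trd_A(x)$---and is therefore identically zero. I would establish $(\ast)$ by a parallel induction on $r$: the $r = 2$ case is a reformulation of the uniqueness half of \cite[(5.20)]{KMRT}, and for $r \geq 3$ the two-factor identity applied to $(A',\sigma')\otimes(A_r,\sigma_r)$ gives
\[
\Sym(A,\sigma) = \Symd(A,\sigma) + \Sym(A',\sigma')\otimes \Sym(A_r,\sigma_r).
\]
Combining this with the inductive identity for $(A',\sigma')$ and the elementary inclusion $\Symd(A',\sigma')\otimes \Sym(A_r,\sigma_r) \subset \Symd(A,\sigma)$---immediate from $(x + \sigma'(x)) \otimes s = (x \otimes s) + \sigma(x \otimes s)$ for $s\in\Sym(A_r,\sigma_r)$---yields $(\ast)$ for $r$ factors.

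The main obstacle is isolating and proving the spanning property $(\ast)$; once this is in hand, the proposition reduces to a tidy induction on top of the two-factor case already recorded in \cite[(5.20)]{KMRT}.
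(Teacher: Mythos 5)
Your proof is correct and takes essentially the same approach as the paper: both establish existence by splitting off one factor and invoking the two-factor case, and both establish uniqueness via the spanning identity $\Sym = \Symd + \bigotimes_i \Sym(A_i,\sigma_i)$, proved by induction on $r$ using the observation that $\Symd(A',\sigma')\otimes\Sym(A_r,\sigma_r)\subset\Symd(A,\sigma)$. The only cosmetic difference is in the citation chain (the paper appeals to KMRT (5.17) plus the fact that a commuting product of a symmetrized and a symmetric element is symmetrized, whereas you derive the base case of $(\ast)$ from the uniqueness half of (5.20) and check the inclusion directly), but the underlying argument is the same.
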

We call $f_\otimes$  the {canonical semi-trace on the tensor product of $F$-algebras with symplectic involution $\bigotimes_{i=1}^r(A_i,\sigma_i)$}. In the sequel, we will also denote by $f_\otimes$ the unique semi-trace on the algebra with orthogonal involution $\bigotimes_{i=1}^r(A_i,\sigma_i)$ when $F$ has characteristic different from $2$ and $r$ is even. 

\begin{proof} Recall $F$ has characteristic $2$. For any algebra with involution $(A,\sigma)$, given $a\in \Symd(A,\sigma)$ and  $s\in \Sym(A,\sigma)$ such that $s$ and $a$ commute, the product $as$ belongs to $\Symd(A,\sigma)$.  From this and \cite[(5.17)]{KMRT}, an induction argument shows  that 
$$  \Sym\left(\bigotimes_{i=1}^r(A_i,\sigma_i)\right) = \Symd \left(\bigotimes_{i=1}^r(A_i,\sigma_i)\right) + \bigotimes_{i=1}^r \Sym(A_i,\sigma_i)\,.  $$
The uniqueness of the semi-trace $f_\otimes$ follows as the value of any semi-trace on $\Symd$ is fixed.

It remains to prove the existence of such a semi trace. Let $(B,\tau)=\bigotimes_{i=1}^{r-1} (A_i,\sigma_i)$; since $\tau$ is symplectic, we have $\Trd_B(b)=0$ for all $b\in\Sym(B,\tau)$. Pick an arbitrary semi-trace $f$ on $(A_r,\sigma_r)$ and consider the tensor product 
\[(B,\tau)\otimes(A_r,\sigma_r, f)=(B\otimes A_r,\tau\otimes \sigma_r,f_\star).\] The formula above shows that $f_\star$ vanishes on $\Sym(B,\tau)\otimes \Sym(A_r,\sigma_r)$ which contains $\bigotimes_{i=1}^r \Sym(A_i,\sigma_i)$, hence it satisfies the required condition. 
\end{proof}

\begin{remark}\label{part}
The proof actually shows that $f_\otimes$ is the semi-trace induced by an arbitrary semi-trace on one of the factors $(A_i,\sigma_i)$, that is 
\[\left( A_1\otimes\ldots\otimes A_r ,\sigma_1\otimes\ldots\otimes\sigma_r, f_\otimes \right) =\bigotimes_{1\leq k\leq r, k\not =i} (A_k,\sigma_k)\,\otimes (A_i,\sigma_{i},f_i) \,, \]
for any choice of $i$ and of a semi-trace $f_i$ on $(A_i,\sigma_i)$. 
More generally, given a non-trivial partition $I\cup J=\{1,\ldots, r\}$, that is $I\neq \emptyset\neq J$, $I\cap J=\emptyset$, we have 
\[\bigl( A_1\otimes\ldots\otimes A_r ,\sigma_1\otimes\ldots\otimes\sigma_r, f_\otimes \bigr)\simeq (A_I,\sigma_I)\otimes (A_J,\sigma_J,f_J),\]
where, for all subset $S\subset \{1,\ldots, r\}$, $(A_S,\sigma_S)=\bigotimes_{i\in S}(A_i,\sigma_i)$, and $f_J$ is an arbitrary semi-trace on $(A_J,\sigma_J)$. 
\end{remark}

\begin{remark}\label{differentdecomp}
Note that the semi trace $f_\otimes$ on the algebra with involution $(A,\sigma)\simeq \bigotimes_{i=1}^r(A_i,\sigma_i)$  does depend on the choice of $F$-algebras with involution $(A_i,\sigma_i)$ in  the decomposition.
Assume the characteristic of $F$ is $2$. 
 Since the canonical involution on a quaternion algebra is symplectic, for any $2$ quaternion $F$-algebras $Q_1$ and $Q_2$ we have
$$(Q_1,\can)\otimes (Q_1,\can)\simeq (Q_2,\can)\otimes (Q_2,\can)\simeq (M_4(F),\tau)$$
where  $\tau$ is the unique symplectic (and hyperbolic) involution on the $F$-algebra of $4\times 4$-matrices over $F$. 
 However for $i=1,2$  we have that 
$$ (Q_i\otimes Q_i,\can\otimes \can,f_\otimes)\simeq \Ad_{\Nrd_{Q_i}}$$ 
by \cite[(2.9)]{dolphin:conic}, and $\Ad_{\Nrd_{Q_1}}\simeq \Ad_{\Nrd_{Q_2}}$ holds if and only if $Q_1\simeq Q_2$. 
More generally, let $\pi_1$ and $\pi_2$ be $n$-fold Pfister forms. Then for $i=1,2$, $\Ad_{\pi_i}$  is a totally decomposable quadratic pair on the $2^n\times2^n$ matrix algebra $\mathbb{M}_{2^n}(F)$. By  \cite[(6.2)]{dolphin:totdecomp}, there exist quaternion algebras $Q_{1,i},\ldots, Q_{n,i}$ such that   
$\mathbb{M}_{2^n}(F)= \bigotimes_{j=1}^n Q_{j,i}$ and the quadratic pair $\Ad_{\pi_i}$ is the product of the canonical involutions on $Q_i$ together with  the semi-trace  $f_\otimes$. However, $\Ad_{\pi_1}\simeq \Ad_{\pi_2}$ if and only if $\pi_1\simeq \pi_2$. Therefore the semi-trace $f_\otimes$  depends on the choice of the quaternion algebras in the decomposition of $\mathbb{M}_{2^n}(F).$
\end{remark}

\section{Canonical quadratic pair on a Clifford algebra}\label{sec:canqp}

Throughout this section, $(A,\sigma,f)$ is an algebra with quadratic pair. We assume $A$ has degree $n=2m$, with $m$ even, and $m\equiv 0\mod 4$ if $F$ is of characteristic different from $2$.
Under this assumption on the degree of $A$, the canonical involution $\underline \sigma$ of the Clifford algebra $\C=\C(A,\sigma,f)$ has symplectic type in characteristic $2$, and orthogonal type in characteristic different from $2$, 
see
~\cite[(8.12)]{KMRT}. The purpose of this section is to define a semi-trace $\underline{f}$ on $\bigl(\C,\underline\sigma\bigr)$, which we call the canonical semi-trace of the Clifford algebra, provided $A$ satisfies the above conditions and 
has degree $2m\geq 8$. We also give an explicit description of $\underline f$ in the split case, with particular attention to the adjoint of a $3$-fold Pfister form. 

\subsection{Definition of the canonical semi-trace} \label{def}
Consider the $F$-linear canonical map $c:\,A\rightarrow \C$  induced by $A\rightarrow \underline{A}\rightarrow T(\underline{A})$.
By \cite[(8.16)]{KMRT} we have \[c(x)+\underline{\sigma}(c(x))=\Trd_A(x).\] 
Hence we have 
\begin{equation}
\label{trace.eq}
\text{for all }x\in A,\quad\left\{\begin{array}{l}
c(x)\in \Skew(\C,\underline{\sigma})\text{ if and only if }\Trd_A(x)=0,\\
c(x)+\underline{\sigma}(c(x))=1\text{ if and only if }\Trd_A(x)=1.\\
\end{array}\right. 
\end{equation}

The main result in this section is the following: 
\begin{prop}\label{prop:cansemi}
 Assume $A$ has degree $2m\geq 8$, with $m$ even and further that $m\equiv  0\mod 4$ if the characteristic is different from $2$. Then  $\underline \sigma$
 is symplectic in characteristic $2$ and orthogonal otherwise. For all $\lambda\in A$ with  $\Trd_A(\lambda)=1$, the element $c(\lambda)$ defines a semi-trace on $(\C,\underline{\sigma})$, which does not depend on the choice of $\lambda$ among reduced trace $1$ elements of $A$. \end{prop}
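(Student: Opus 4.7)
The plan is to address the three claims of the proposition separately. The assertion that $\underline\sigma$ is symplectic in characteristic $2$ and orthogonal otherwise is a direct consequence of \cite[(8.12)]{KMRT} under the hypotheses on $m$, as already noted in the paragraph preceding the statement.

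For the existence of the semi-trace: given $\lambda \in A$ with $\Trd_A(\lambda) = 1$, the second case of \eqref{trace.eq} yields $c(\lambda) + \underline\sigma(c(\lambda)) = 1$, so by \cite[(5.7)]{KMRT} the formula $s \mapsto \Trd_{\C}\bigl(c(\lambda)\cdot s\bigr)$ defines a semi-trace on $(\C, \underline\sigma)$.

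The independence statement requires more work. Given two elements $\lambda, \lambda' \in A$ of reduced trace $1$, their difference $\mu = \lambda - \lambda'$ has reduced trace $0$, and the induced semi-traces coincide if and only if $c(\mu) = c(\lambda) - c(\lambda')$ is alternating in $(\C, \underline\sigma)$ (two elements defining the same semi-trace differ by an alternating element, as recalled in \S\ref{AI.sec}). The first case of \eqref{trace.eq} gives $c(\mu) \in \Skew(\C, \underline\sigma)$, which immediately settles the characteristic different from $2$ case since $\Skew = \Alt$ there.

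The main obstacle is the characteristic $2$ case, where $\Skew(\C, \underline\sigma) = \Sym(\C, \underline\sigma)$ properly contains $\Alt(\C, \underline\sigma) = \Symd(\C, \underline\sigma)$; we must upgrade the symmetry of $c(\mu)$ to the stronger condition of being symmetrised. Here I would invoke the Lie algebra structures developed in \cite[\S 8.C]{KMRT}. Two elementary observations already handle part of the problem: for $s \in \Sym(A, \sigma)$, the defining relations of $\C$ give $c(s) = f(s)\cdot 1 \in F\cdot 1$, which lies in $\Symd(\C, \underline\sigma)$ since $\underline\sigma$ is symplectic; and for $y \in A$, the identity $c(y - \sigma(y)) = c(y) - \underline\sigma(c(y))$ shows $c(\Alt(A, \sigma)) \subset \Alt(\C, \underline\sigma)$. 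However, in characteristic $2$, $\Sym(A, \sigma) + \Alt(A, \sigma) = \Sym(A, \sigma)$ is a proper subspace of $A$, so these observations alone do not suffice; one must use the Lie bracket computations of \cite[\S 8.C]{KMRT} to exhibit alternating lifts in $\C$ for the remaining reduced-trace-zero elements of $A$. This is where the technical heart of the argument lies, and it is also where the hypothesis $2m \geq 8$ should enter, through the availability of enough room in $\C$ to realise such lifts.
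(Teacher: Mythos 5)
Your overall strategy is correct and matches the paper's: reduce the independence claim to showing that for a trace-zero element $\mu$, the image $c(\mu) \in \Skew(\C, \underline\sigma)$ actually lies in $\Alt(\C, \underline\sigma)$. Your handling of the easy parts — the symplectic/orthogonal type of $\underline\sigma$ via \cite[(8.12)]{KMRT}, existence of the semi-trace from \eqref{trace.eq}, and the characteristic $\neq 2$ case — is correct. The two observations you make in characteristic $2$ (that $c$ maps $\Sym(A,\sigma)$ into $F\cdot 1 \subset \Symd(\C, \underline\sigma)$ and maps $\Alt(A,\sigma)$ into $\Alt(\C, \underline\sigma)$) are also correct, though, as you rightly note, together they only cover $\Sym(A,\sigma)$, of dimension $n(n+1)/2$, far short of the $(n^2-1)$-dimensional kernel of $\Trd_A$.

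The technical heart — the analogue of \cref{lem:imc}, namely that $c(A)\cap\Skew(\C,\underline\sigma)=c(A)\cap\Alt(\C,\underline\sigma)$ once $\deg A\geq 6$ — is left entirely to a gesture at ``Lie bracket computations from \cite[\S 8.C]{KMRT}.'' This is a genuine gap: no such computation is performed, and it is not clear that Lie brackets alone would deliver it. The paper's proof instead reduces to the split case $(A,\sigma,f)\simeq\Ad_q$, takes an explicit basis of $c(A)\cap\Skew(\C,\underline\sigma)$ built from a symplectic basis of $q$ (using \cite[(8.17)]{KMRT} and the basis of $c(A)$ from \cite[p.95-96]{KMRT}), and shows each basis element is alternating by a direct manipulation: choosing a third index $k\notin\{i,j\}$ and using $e_ke_k'+e_k'e_k=1$ to rewrite, for instance, $e_ie_j=e_ie_je_ke_k'+\underline{\sigma}(e_ie_je_ke_k')$. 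This is also where the degree bound enters — one needs $m\geq 3$ so that a third index $k$ exists — not, as your proposal suggests, through the $2m\geq 8$ hypothesis, which serves only to ensure $\underline\sigma$ has the required type. Without carrying out this explicit computation or providing a substitute of comparable substance, the independence claim remains unproved.
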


\begin{proof} By~(\ref{trace.eq}), as $\Trd_A(\lambda)=1$ we have that $c(\lambda)+\underline{\sigma}(c(\lambda))=1$, and hence the linear form which maps $s\in\Sym(\C,\underline\sigma)$ to $\Trd_\C(c(\lambda)s)$ is a semi-trace on $(\C,\underline{\sigma})$. Moreover, two elements $\lambda$ and $\lambda'$ of reduced trace $1$ differ by a trace $0$ element $\mu$, hence applying again~(\ref{trace.eq}), we have $c(\mu) = c(\lambda)- c(\lambda') \in \Skew(\C,\underline{\sigma})$. The elements $c(\lambda)$ and $c(\lambda')$ define the same semi-trace on  $(\C,\underline{\sigma})$ if and only if  $c(\mu)\in \Alt(\C,\underline{\sigma})$; this follows from the next lemma.
\begin{lem}\label{lem:imc}
Assume $A$ has degree at least $6$. In the Clifford algebra $\C$, we have \[c(A)\cap \Skew(\C,\underline{\sigma})  = c(A)\cap \Alt(\C,\underline{\sigma})\,.\]
\end{lem}

The result is trivial if the characteristic of $F$ is different from $2$, as in this case $\Skew(\C,\underline{\sigma})  = \Alt(\C,\underline{\sigma})$. Assume now that the characteristic of $F$ is  $2$.
The inclusion of the left  hand side in the right is clear. We now show the converse. It suffices to show the result in the case where $A$ is split.

Assume that $(A,\sigma,f)= \Ad_q$, for some quadratic form $q:V\rightarrow F$, and $\C=\C_0(q)$. Let $q= [a_1,b_1]\perp \ldots \perp [a_m,b_m]$ be a decomposition of $q$ and $e_1, e_1', \ldots, e_m, e_m'$ the symplectic associated basis. 
As explained in~\cite[p.95-96]{KMRT}, the set $\{1\}\cup \{e_ie_j,e_i'e_j'\mid 1\leqslant i< j \leqslant m\}\cup\{ e_ie_j' \mid  1\leqslant i,j\leqslant m \}$ is a basis of $c(A)$. 
All elements in this basis are (skew-)symmetric except for the elements $e_ie'_i$  for all  $i\in \{1,\ldots, m\}$.
However, we have $e_ie'_i +\underline{\sigma}(e_ie'_i)=1$, hence $e_ie_i' + e_{i+1}e'_{i+1}$ is symmetric for all $1\leqslant i\leqslant m-1$.
By \cite[(8.17)]{KMRT}, $c(A)_0=c(A)\cap \Skew(\C,\underline{\sigma})$ has codimension $1$ in $c(A)$. 
Therefore,
$$\{1\}\cup \{e_ie_j,e_i'e_j'\mid 1\leqslant i< j \leqslant m\}\cup\{ e_ie_j' \mid  i\neq j \}\cup\{e_ie_i' + e_{i+1}e'_{i+1}\mid 1\leqslant i\leqslant m-1  \}$$
is  a basis of $c(A)\cap \Skew(\C,\underline{\sigma})$.  
To complete the proof, we show that these basis elements lie in $\Alt(\C,\underline{\sigma})$. 

Pick $i,j\in\{1,\dots,m\}$ with $i\neq j$. 
As $m\geqslant 3$, 
there exists some $k\in\{1,\ldots, m\}\setminus\{i,j\}$.  As 
$e_ke_k' + e_k'e_k=1$, and  both $e_k$ and $e_k'$ commute with $e_i$ and $e_j$, and $e_i$ and $e_j$ commute, we have that 
\begin{eqnarray*}
 e_ie_j &=& e_ie_j\cdot 1 = e_ie_j(e_ke_k' + e_k'e_k) = e_ie_je_ke_k' + e_ie_je_k'e_k= e_ie_j e_ke_k'+e_k'e_ke_je_i\\
&=&  e_ie_j e_ke_k' + \underline{\sigma}(e_ie_j e_ke_k')\in\Alt(\C,\underline\sigma)\,.
\end{eqnarray*}

A similar argument shows that  $e_i'e_j'$ and $e_ie_j' \in \Alt(\C,\underline{\sigma})$.
Consider now 
\begin{eqnarray*}
e_ie_i' + e_je_j'
&=& e_ie_i'(e_ke_k'+e_k'e_k) + e_je_j'(e_ke_k'+e_k'e_k)\\
&=& e_ie_i' e_ke_k' + e_k'e_ke_ie_i'  + e_je_j' e_ke_k' + e_k'e_ke_je_j'\,.\\
\end{eqnarray*}
Using $e_ie_i' +e_i'e_i=1=e_je_j'+e_j'e_j$, we get that 
\begin{eqnarray*} e_ie_i' + e_je_j'  &=& e_ie_i' e_ke_k' + e_k'e_k(1+e_i'e_i)  + e_je_j' e_ke_k' + e_k'e_k(1+e_j'e_j) \\&=&   e_ie_i' e_ke_k' + e_k'e_ke_i'e_i  + e_je_j' e_ke_k' + e_k'e_ke_j'e_j
\\
&=& (e_ie_i' e_ke_k'  +  e_je_j' e_ke_k' ) + \underline{\sigma}(e_ie_i' e_ke_k'  +  e_je_j' e_ke_k' )
\,.
\end{eqnarray*}
In particular, $e_ie_i' + e_{i+1}e'_{i+1}\in \Alt(\C,\underline{\sigma})$ for all $1\leqslant i\leqslant m-1$, and this finishes the proof. 
\end{proof}

Using this proposition, we get : 
\begin{defi}\label{semitrace.def}
Let $(A,\sigma,f)$ be an algebra with quadratic pair, of degree $2m\geq 8$, with $m$ even and further $m\equiv 0\mod 4$ if  the characteristic of $F$ is different from $2$. 
Given $\lambda\in A$ with $\Trd_A(\lambda)=1$, the semi-trace 
\[\underline f:\, s\in\Sym(\C,\underline{\sigma})\mapsto \Trd_\C(c(\lambda)s)\] does not depend on $\lambda$. It is called the {canonical semi-trace on $(\C,\underline{\sigma})$}. We refer to the pair $(\underline{\sigma}, \underline{f})$ as the canonical quadratic pair on $\C=\C(A,\sigma,f)$.
\end{defi}

\begin{remark} \label{splitcentre}
(1) Since the reduced trace is a nonzero linear form, there exists $\lambda\in A$ such that $\Trd_A(\lambda)=1$. 

(2) Assume the characteristic of $F$ is prime to the degree of $A$, and let  $(A,\sigma)$ be an algebra with orthogonal involution. Then $\frac{1}{\deg(A)}\in A$ has reduced trace $1$, and its image in $\C(A,\sigma)$ is $c\left(\frac{1}{\deg(A)}\right)=f\left(\frac{1}{\deg(A)}\right)=\frac{1}{2}$ by~\cite[(5.6) \& (8.7)]{KMRT}. 
So $f$ is half the reduced trace of $\C$, as prescribed in this case. 

(3) This definition gives a semi-trace on the two components of the Clifford algebra  when the involution has trivial discriminant. In characteristic different from $2$, this is clear, since the involution is orthogonal on each component. Assume the characteristic of $F$ is $2$. As explained in~\cite[(8.12)]{KMRT}, since $m$ is even, the canonical involution restricts to a symplectic involution on each component. More precisely, let $\ell\in A$ be an element that gives the semi-trace $f$. As explained in the proof of~\cite[(8.28)]{KMRT}, the centre of $C(A,\sigma,f)$ is $F[c(\ell)]$. If $\textrm{disc}(\sigma,f)$ is trivial, then
$c(\ell)^2+c(\ell)=u^2+u$ for some $u\in F$. It follows that 
 $\C=\C(A,\sigma,f)$ decomposes into two components, $\C\simeq\C^+\times \C^-$, where $\C^+= \C\cdot (c(\ell)+u)$ and $\C^-=\C\cdot(c(\ell)+u+1)$. 
 By~\cite[(5.6)\&(8.16)]{KMRT}, we have $\underline\sigma(c(\ell))=c(\sigma(\ell))=c(\ell)+c(1)=c(\ell)+m$. 
  Since $m$ is even, this shows the canonical involution on $\C$ restricts to involutions, denoted $\sigma^+$ and $\sigma^-$ respectively, on both components, which are symplectic. 
 We get a pair of canonical semi-traces, respectively denoted by $f^+$ and $f^-$, and determined by $c(\lambda)^+=c(\lambda).(c(\ell)+u)$ and $c(\lambda)^-=c(\lambda).(c(\ell)+u+1)$. 

\end{remark}

The next proposition provides some evidence that the quadratic pair we have just defined is part of the structure of the Clifford algebra. Let $\theta:\,(A,\sigma,f)\rightarrow (B,\tau,g)$ be an isomorphism of algebras with quadratic pairs. It follows from Definition~\cite[(8.7)]{KMRT} that $\theta$ induces an isomorphism $\C(\theta):\,\C(A,\sigma,f)\rightarrow \C(B,\tau,g)$, satisfying 
$\C(\theta)(c_A(a))=c_B(\theta(a))$ for all $a\in A$, where $c_A$ (respectively $c_B)$ denotes the canonical map $c_A:\, A\rightarrow \C(A,\sigma,f)$ (respectively $c_B:\, B\rightarrow \C(B,\tau,g)$). Moreover, one may easily check $\C(\theta)$ preserves the canonical involutions. We claim it is an isomorphism of algebras with quadratic pairs, that is 

\begin{prop}\label{aut.prop}
Every isomorphism $\theta:\,(A,\sigma,f)\rightarrow (B,\tau,g)$ induces an isomorphism $\C(\theta):\,\bigl(\C(A,\sigma,f),\underline\sigma,\underline f\bigr)\rightarrow \bigl(\C(B,\tau,g),\underline\tau,\underline g\bigr)$. 
\end{prop}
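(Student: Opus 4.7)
The plan is to use the explicit formula for the canonical semi-trace from \cref{semitrace.def} together with the compatibility of $\C(\theta)$ with the canonical maps $c_A$ and $c_B$ and with the reduced trace. Since the statement already records that $\C(\theta)$ is an $F$-algebra isomorphism satisfying $\C(\theta)(c_A(a)) = c_B(\theta(a))$ and intertwining the canonical involutions $\underline\sigma$ and $\underline\tau$, the only thing remaining is to check that it carries $\underline f$ to $\underline g$ on symmetric elements.

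First I would choose an element $\lambda \in A$ with $\Trd_A(\lambda) = 1$; such an element exists by \cref{splitcentre}(1). Since $\theta:A \to B$ is an isomorphism of central simple $F$-algebras, it preserves reduced characteristic polynomials, and in particular $\Trd_B(\theta(\lambda)) = 1$. Hence $\theta(\lambda)$ is a legal choice for computing $\underline g$, and by \cref{semitrace.def}, for every $t \in \Sym(\C(B,\tau,g),\underline\tau)$ one has $\underline g(t) = \Trd_{\C(B,\tau,g)}(c_B(\theta(\lambda))\, t)$.

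Now fix $s \in \Sym(\C(A,\sigma,f),\underline\sigma)$. Since $\C(\theta)$ intertwines the canonical involutions, $\C(\theta)(s)$ lies in $\Sym(\C(B,\tau,g),\underline\tau)$. Using the identity $\C(\theta)(c_A(\lambda)) = c_B(\theta(\lambda))$ together with the fact that $\C(\theta)$ is an algebra isomorphism, we compute
\[
c_B(\theta(\lambda)) \cdot \C(\theta)(s) \;=\; \C(\theta)\bigl(c_A(\lambda)\, s\bigr).
\]
Because $\C(\theta)$ is an isomorphism of central simple $F$-algebras, it preserves the reduced trace, so
\[
\underline g\bigl(\C(\theta)(s)\bigr) \;=\; \Trd_{\C(B,\tau,g)}\bigl(\C(\theta)(c_A(\lambda)\, s)\bigr) \;=\; \Trd_{\C(A,\sigma,f)}\bigl(c_A(\lambda)\, s\bigr) \;=\; \underline f(s).
\]
This is exactly the condition that $\C(\theta)$ be an isomorphism of algebras with quadratic pair.

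There is no real obstacle here: the content of the proposition is the well-definedness of $\underline f$ established in \cref{prop:cansemi}, which allows us to transport the defining element $\lambda$ under $\theta$ freely. The argument is entirely mechanical once the formula $\underline f(s) = \Trd_{\C}(c(\lambda)s)$ and the compatibility of $\C(\theta)$ with $c_A$, $c_B$, reduced trace, and canonical involution are in hand.
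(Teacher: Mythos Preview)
Your proof is correct and follows essentially the same route as the paper's: pick $\lambda\in A$ of reduced trace $1$, transport it to $\theta(\lambda)\in B$, and use the compatibility $\C(\theta)\circ c_A = c_B\circ\theta$ together with preservation of reduced trace under the algebra isomorphism $\C(\theta)$ to conclude $\underline g(\C(\theta)(s))=\underline f(s)$. One small terminological caveat: the Clifford algebra $\C(A,\sigma,f)$ has centre a quadratic \'etale extension of $F$, so it is not in general central simple over $F$; nonetheless $\C(\theta)$ is an $F$-algebra isomorphism carrying centre to centre, hence preserves the reduced trace, which is all you need.
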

\begin{proof}
It only remains to check that $C(\theta)$ preserves the semi-trace. 
Let $s$ be a symmetric element, $s\in\Sym\bigl(\C(A,\sigma,f),\underline\sigma\bigr)$, so that $\C(\theta)(s)\in\Sym\bigl(\C(B,\tau,g),\underline\tau\bigr)$. 
We have to prove $\underline g(\C(\theta)(s))=\underline f (s)$. 
Pick $\lambda\in A$ with $\Trd_A(\lambda)=1$. 
Since $\theta$ is an isomorphism, we have $\Trd_B(\theta(\lambda))=\Trd_A(\lambda)=1$. 
Therefore, by \cref{semitrace.def}, 
\[\underline g(\C(\theta)(s))=\Trd_{\C(B,\tau,g)}(c_B(\theta(\lambda))\C(\theta)(s))=\Trd_{\C(B,\tau,g)}(\C(\theta)(c_A(\lambda)s).\]
Since $\C(\theta)$ is an isomorphism, again it preserves the reduced trace, and we get 
\[\underline g(\C(\theta)(s))=\Trd_{\C(A,\sigma,f)}(c_A(\lambda)s)=\underline{f}(s),\]
as required. 
\end{proof}

\subsection{Explicit description in the split case}
\label{split.section}
Let $(V,q)$ be a nonsingular quadratic space of dimension $2m$, with polar form $b$. We assume that $m$ is even, and 
further that $m\equiv 0 \mod 4$ if the characteristic of $F$ is different from $2$,
so that the canonical involution $\underline{\sigma_q}$ of $\C_0(V,q)$ is of orthogonal type in characteristic different from $2$, and of symplectic type otherwise. Since $q$ is nonsingular, we may find a pair of vectors $(e,e')$ such that $b_q(e,e')=1$. 
Let $u=ee'$ be the corresponding element in $\C_0(V,q)$. We have $u+\underline{\sigma_q}(u)=ee'+e'e=b_q(e,e')=1$. Therefore, this element $u$ defines a semi-trace on $(\C_0(V,q),\underline{\sigma_q})$, which we denote by $f_{e,e'}$. We claim it coincides with the canonical semi-trace of $\bigl(\C(\Ad_q),\underline{\sigma_q})$ under the canonical identification provided in~\cite[(8.8)]{KMRT}. More precisely, we have : 
\begin{prop}
\label{split.prop}
Let $(V,q)$ be a nonsingular quadratic space of dimension $2m\geq 8$, with $m$ even and 
assume further that $m\equiv 0 \mod 4$ if the characteristic of $F$ is different from $2$. 
The standard identification $\varphi_q:\,V\otimes V\rightarrow \End_F(V)$ induces an isomorphism of algebras with quadratic pairs 
\[\bigl(\C_0(V,q),\underline{\sigma_q},f_{e,e'}\bigr)\simeq \bigl(\C(\Ad_q),\underline{\sigma_q},\underline{f_q}\bigl).\]
\end{prop}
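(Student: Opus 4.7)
The plan is to reduce the statement to a direct comparison of the elements giving each semi-trace, by choosing a particularly convenient reduced-trace-one element in $\End_F(V)$.

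First, I would invoke \cite[(8.8)]{KMRT}, which asserts that the standard identification $\varphi_q:V\otimes V\rightarrow \End_F(V)$ extends to an isomorphism of $F$-algebras with involution
\[\Phi_q:\bigl(\C(\Ad_q),\underline{\sigma_q}\bigr)\xrightarrow{\sim}\bigl(\C_0(V,q),\underline{\sigma_q}\bigr),\]
and that on the image of the canonical map $c:\End_F(V)\to\C(\Ad_q)$, this isomorphism is characterized by $\Phi_q\bigl(c(\varphi_q(v\otimes w))\bigr)=v\cdot w$ in $\C_0(V,q)$, for all $v,w\in V$. Given this, to upgrade $\Phi_q$ to an isomorphism of algebras with quadratic pair it suffices to prove $\Phi_q$ takes $\underline{f_q}$ to $f_{e,e'}$.

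Second, the strategy is to compute $\underline{f_q}$ via a clever choice of $\lambda\in\End_F(V)$ of reduced trace $1$, exploiting the fact (\cref{semitrace.def}) that any such choice yields the same semi-trace. Take $\lambda=\varphi_q(e\otimes e')$. Under the identification $V\otimes V\simeq \End_F(V)$ via $b_q$, the reduced trace of $\varphi_q(v\otimes w)$ equals $b_q(v,w)$, so $\Trd_{\End_F(V)}(\lambda)=b_q(e,e')=1$, and $\lambda$ is admissible. By \cref{semitrace.def},
\[\underline{f_q}(s)=\Trd_{\C(\Ad_q)}\bigl(c(\lambda)\cdot s\bigr)\qquad\text{for all }s\in\Sym(\C(\Ad_q),\underline{\sigma_q}).\]

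Third, I would transport this formula via $\Phi_q$. Since $\Phi_q$ is an isomorphism of central simple $F$-algebras, it preserves reduced traces. Using Step 1, $\Phi_q(c(\lambda))=ee'=u$, so for any $t\in\Sym(\C_0(V,q),\underline{\sigma_q})$ with preimage $s$ under $\Phi_q$,
\[\underline{f_q}(s)=\Trd_{\C_0(V,q)}(u\cdot t)=f_{e,e'}(t),\]
the last equality being the very definition of the semi-trace $f_{e,e'}$ determined by $u$. This shows $\underline{f_q}=f_{e,e'}\circ\Phi_q$, completing the proof.

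The only real content is Step 1, which we simply quote from KMRT; the computation that $\Trd(\varphi_q(e\otimes e'))=b_q(e,e')$ is a direct consequence of the definition of the standard identification. The main conceptual point, and the only place some care is needed, is the observation that Definition~\ref{semitrace.def} is independent of the choice of reduced-trace-$1$ element, so that we may pick $\lambda$ to be $\varphi_q(e\otimes e')$ rather than some more natural-looking but computationally useless element; from there the identification is essentially tautological.
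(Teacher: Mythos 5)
Your proposal is correct and follows essentially the same route as the paper: invoke KMRT (8.8) for the isomorphism of algebras with involution, reduce the problem to identifying the semi-traces, and use the freedom in Definition~\ref{semitrace.def} to pick $\lambda=\varphi_q(e\otimes e')$, whose image under $\Phi_q\circ c$ is $ee'=u$. The only cosmetic difference is that you cite the general identity $\Trd(\varphi_q(v\otimes w))=b_q(v,w)$, whereas the paper verifies $\Trd(\varphi_q(e\otimes e'))=1$ by an explicit rank-one computation using the decomposition $V=P\perp P^\perp$; both are immediate from the formula $\varphi_q(v\otimes w)(x)=v\,b_q(w,x)$.
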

\begin{proof}
In view of~\cite[(8.8)]{KMRT}, it only remains to identify the semi-traces. Denote by $P$ the plane generated by $e$ and $e'$ in $V$; since $q$ restricts to a nonsingular form on $P$, by~\cite[(7.22)]{Elman:2008}, we have $V=P\perp P^\perp$. 
Recall from~\cite[(5.10)]{KMRT} that $\varphi_q(e\otimes e')$ maps $x\in V$ to $e\,b_q(e',x)$. Hence it vanishes on $e'$ and $P^\perp$, and maps $e$ to itself. Therefore, $\varphi_q(e\otimes e')\in \End_F(V)$ has trace $1$. By \cref{semitrace.def}, the canonical semi-trace of $(\C(\Ad_q),\underline{\sigma_q})$ is determined by the element $c(\varphi_q(e\otimes e'))$; the corresponding element in $\C_0(q)$ is $u=ee'$, and this proves the proposition. 
\end{proof}

\begin{remark}
\label{split.rem}
It follows from this proposition that if $V$ has dimension $2m\geq 8$ with $m$ even
and 
further  $m\equiv 0 \mod 4$ if the characteristic of $F$ is different from $2$, then the semi-trace $f_{e,e'}$ on $(\C_0(V,q),\underline{\sigma_q})$ does not depend on the choice of the pair $(e,e')$ such that $b_q(e,e')=1$. 
This is obvious if the characteristic of $F$ is different from $2$, as in this case the semi-trace is unique. 

If $F$ has characteristic $2$, this can be directly checked as follows. 
Consider two such pairs $(e,e')$ and $(g,g')$, and let $u=ee'$ and $v=gg'$ be the corresponding elements in $\C_0(q)$. We need to prove that $u$ and $v$ differ by an alternating  element of $(\C_0(V,q),\underline{\sigma_q})$. Let $P$ and $Q$ be the planes respectively generated by $(e,e')$ and $(g,g')$. The polar form $b_q$ is nondegenerate on both planes. 
 We claim there exists a third plane $R$ over which $b_q$ is nondegenerate, and which is orthogonal to $P$ and $Q$. Indeed, by~\cite[Prop 1.6]{Elman:2008}, the form $b_q$ also is nondegenerate on the orthogonal $P^\perp$ of the plane $P$, which has dimension $2m-2$. Besides, $P^\perp\cap Q^\perp$ is a subspace of $P^\perp$ of dimension at least $2m-4$. Since $m\geq 4$, $P^\perp\cap Q^\perp$ has dimension strictly larger than half the dimension of $P^\perp$. Therefore, $b_q$ cannot be identically $0$ on $P^\perp\cap Q^\perp$, which proves the existence of $R$. 
Let $(h,h')$ be a symplectic base of $R$, and let $w=hh'\in\C_0(V,q)$. We have  
${\underline{\sigma_q}}(u)=u+1$, ${\underline{\sigma_q}}(v)=v+1$ and ${\underline{\sigma_q}}(w)=w+1$. Moreover, $w$ commutes with $u$ and $v$.
It follows that  
$u=v+(u+v)w+{\underline{\sigma_q}}\bigl((u+v)w\bigr)$. Hence $u$ and $v$ differ by an alternating element, so they define the same semi-trace. 
\end{remark} 

Let us now pick an explicit presentation of the quadratic form $q$; we get the following: 
\begin{prop}
\label{splitdec.prop} Assume $F$ is of characteristic $2$ and $m$ is even. 
If \[q=[a_1,b_1]\perp\dots\perp[a_m,b_m],\mbox{ then }\]  
\[(\C(\Ad_q),\underline{\sigma_q},\underline{f_q})\simeq \bigl(Q_1\otimes\dots\otimes Q_{m-1},\can\otimes\dots\otimes\can,f_\otimes\bigr)\otimes _F K,\]
where $Q_i=[a_ib_i,a_ia_m)$, $\can$ stands for the canonical involution, $f_\otimes$ is the canonical semi-trace associated to this tensor product (see \cref{canprod}), and $K$ is the quadratic \'etale extension of $F$ generated by $\Delta(q)\in F/\wp(F)$. 
\end{prop}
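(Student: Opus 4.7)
The plan is to transport the canonical semi-trace into the explicit tensor product description of~\cref{ex:cliffex} by making a careful choice of the defining element. Since $m$ is even,~\cref{ex:cliffex} already provides an isomorphism of $F$-algebras with involution
\[(\C_0(q), \underline{\sigma_q}) \simeq (Q_1, \can) \otimes \dots \otimes (Q_{m-1}, \can) \otimes_F K,\]
sending $u_i = e_i e_i'$ to the $i$-th quaternion generator, with $K = F[\xi]$ central and carrying trivial involution (the trivial involution follows from $m$ being even). It therefore suffices to match the two semi-traces through this isomorphism.

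For the left hand side, I would apply~\cref{split.prop} to the pair $(e,e')=(e_1,e_1')$, which satisfies $b_q(e_1,e_1')=1$. This identifies $\underline{f_q}$, via $\varphi_q$, with the semi-trace $f_{e_1,e_1'}$ on $(\C_0(q),\underline{\sigma_q})$ defined by the element $e_1e_1'\in\C_0(q)$. Through the decomposition above, this element is precisely $u_1\otimes 1\otimes \dots \otimes 1\otimes 1$.

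For the right hand side, the element $u_1\in Q_1$ satisfies $u_1+\can(u_1)=1$, hence determines a semi-trace on $(Q_1,\can)$ given by $s\mapsto\Trd_{Q_1}(u_1s)$. By~\cref{part}, the canonical semi-trace $f_\otimes$ on $Q_1\otimes\dots\otimes Q_{m-1}$ is exactly the one induced by this choice on the first factor, hence $f_\otimes$ is determined by the element $u_1\otimes 1\otimes\dots\otimes 1$. After scalar extension to $K$, the semi-trace on $(Q_1\otimes\dots\otimes Q_{m-1},\can\otimes\dots\otimes\can,f_\otimes)\otimes_F K$ is still given by this very element, viewed now in $Q_1\otimes\dots\otimes Q_{m-1}\otimes K$, and this is precisely the image of $e_1e_1'$ under~\cref{ex:cliffex}. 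The two semi-traces therefore coincide.

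The main subtlety — and the only real obstacle — is that the semi-trace associated to a defining element is only invariant under addition of alternating elements, so picking mismatched representatives on either side would require an extra alternating-element computation. My choice of $(e_1,e_1')$ makes the defining elements on both sides literally the same tensor, avoiding this step entirely. Had I instead chosen $(e_m,e_m')$ (which would more naturally match the role of $e_m$ in the definition of $v_i=e_ie_m$ in~\cref{ex:cliffex}), I would have been forced either to invoke~\cref{split.rem}, or to verify directly via the argument of~\cref{lem:imc} that $e_1e_1'+e_me_m'$ lies in $\Alt(\C_0(q),\underline{\sigma_q})$.
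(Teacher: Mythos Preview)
Your proof is correct and follows essentially the same route as the paper: use \cref{ex:cliffex} for the isomorphism of algebras with involution, invoke \cref{split.prop} with the pair $(e_1,e_1')$ so that $\underline{f_q}$ is given by $u_1=e_1e_1'$, and then use \cref{part} to identify the semi-trace determined by $u_1\otimes 1\otimes\dots\otimes 1$ with $f_\otimes$. Your closing paragraph about the choice of symplectic pair is a useful remark but not a departure from the paper's argument.
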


\begin{remark} Let $q$ be a non-degenerate quadratic form of even dimension over a field of characteristic different from $2$, and consider an orthogonal basis $(e_1,e_2,\dots, e_{2m})$ of the underlying vector space. A direct computation shows that the elements 
\[\left\{\begin{array}{l}i_1=e_1e_2\\ j_1=e_1e_3\\\end{array}\right.,\ \left\{\begin{array}{l} i_2=e_1e_2e_3e_4\\ j_2=e_1e_2e_3e_5\\\end{array}\right.,\ \dots,\  \left\{\begin{array}{l}i_{m-1}=e_1\dots e_{2m-3}e_{2m-2}\\ j_{m-1}=e_1\dots e_{2m-3}e_{2m-1}\\\end{array}\right.\] 
generate pairwise commuting $F$-quaternion algebras in $\C_0(q)$ that are stable under the canonical involution. Hence, $\C_0(q)$ is isomorphic to the tensor product of those quaternion algebras, extended from $F$ to the center $K=F[e_1\dots e_{2m}]$. It follows that $(\C_0(q), \underline{\ad_q})$ is totally decomposable as an algebra with involution. 
The proposition above extends this result. 
In particular, in characteristic 2, for a nonsingular quadratic form $q$, we have that $(\C(\Ad_q),\underline{\sigma_q},\underline{f_q})$ is a totally decomposable algebra with quadratic pair; indeed, 
\[\bigl(Q_1\otimes\dots\otimes Q_{m-1},\can\otimes\dots\otimes\can,f_\otimes\bigr)\otimes _F K\simeq \bigl({Q_1}_{K}\otimes\dots\otimes {Q_{m-1}}_{K},\can\otimes\dots\otimes\can,f_\otimes\bigr). \]
In fact, we have more, namely that $\C_0(q)$, endowed with its canonical quadratic pair, has a totally decomposable descent to $F$. 

\end{remark}
\begin{proof} 
By \cref{split.prop} and \cref{ex:cliffex} we already know that the two algebras with involution are isomorphic, and we need to check that the semi-trace $f_\otimes$ on $\bigl({Q_1}_{K}\otimes\dots\otimes {Q_{m-1}}_{K},\can\otimes\dots\otimes\can)$ corresponds to $f_{e_1,e_1'}$ on $(\C_0(V,q),\underline{\sigma_q})$. Let $(B,\tau)=\otimes_{i=2}^{m-1} ({Q_i}_{K},\can)$. 
As explained in \cref{ex:cliffex}, $u_1=e_1e_1'\in\C_0(V,q)$ is equal to $u_1\otimes 1\in {Q_1}_{K}\otimes B$. 
Therefore, $f_{e_1,e_1'}$ on $(\C_0(V,q),\underline{\sigma_q})$ is the semi-trace induced by $f_{u_1}$ on $({Q_1}_{K},\can)$ as in~\cref{prod.section}, which coincides with $f_\otimes$ by~\cref{part}. 
\end{proof} 

\begin{cor}\label{pfister.cor}
Let $\pi$ be a $3$-fold Pfister form over $F$. We have 
$$(\C(\Ad_\pi),\underline{\sigma_\pi}, \underline{f_\pi})\simeq \mathrm{Ad}_\pi\times \mathrm{Ad}_\pi\,.$$
\end{cor}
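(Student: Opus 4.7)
The plan is to combine \cref{splitdec.prop} with the classification of totally decomposable quadratic pairs on split matrix algebras (\cite[(6.2)]{dolphin:totdecomp}, cf.\ \cref{differentdecomp}).

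Applying \cref{splitdec.prop} to $\pi$, which has dimension $8$ (so $m = 4$) and, being a Pfister form, trivial discriminant, the \'etale extension $K$ of the centre of the Clifford algebra splits as $K \simeq F \times F$. Thus we obtain
\[(\C(\Ad_\pi),\underline{\sigma_\pi},\underline{f_\pi}) \simeq (Q_1\otimes Q_2\otimes Q_3,\can\otimes\can\otimes\can, f_\otimes) \times (Q_1\otimes Q_2\otimes Q_3,\can\otimes\can\otimes\can, f_\otimes),\]
where the $Q_i = [a_ib_i,a_ia_4)$ are the quaternion algebras determined, via \cref{ex:cliffex}, by a chosen decomposition $\pi = [a_1,b_1]\perp\dots\perp[a_4,b_4]$ of $\pi$ into nonsingular binary forms. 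Since $\pi$ is Pfister its Clifford invariant is trivial, so $Q_1\otimes Q_2\otimes Q_3$ is split over $F$, and each factor is a totally decomposable quadratic pair on $M_8(F)$. By the classification recalled in \cref{differentdecomp}, each such factor is of the form $\Ad_{\tilde\pi}$ for a uniquely determined $3$-fold Pfister form $\tilde\pi$, and it remains to prove $\tilde\pi \simeq \pi$.

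To verify this in characteristic $2$, I would pick an explicit Pfister presentation $\pi = \pff{\alpha,\beta}\otimes[1,\gamma]$, expand it as $[1,\gamma] \perp [\alpha,\alpha\gamma] \perp [\beta,\beta\gamma] \perp [\alpha\beta,\alpha\beta\gamma]$, read off the $Q_i$ from the formula of \cref{ex:cliffex}, and check using the norm-form identity $\Nrd_{[a,b)} \simeq \pfr{b,a}$ for a characteristic $2$ quaternion algebra, together with the multiplicativity of Pfister classes in $I_q^3(F)$, that the Pfister form $\tilde\pi$ attached to the tensor product is isometric to $\pi$. In characteristic different from $2$, the semi-trace is uniquely determined by the orthogonal involution, and the statement reduces to the classical isomorphism of algebras with involution $(\C_0(\pi),\underline{\sigma_\pi}) \simeq \Ad_\pi \times \Ad_\pi$ for $3$-fold Pfister forms, as found in \cite[\S 42.B]{KMRT}.

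The main obstacle I anticipate is the explicit Pfister identification $\tilde\pi \simeq \pi$ in characteristic $2$: it is routine in principle but demands care, because the invariants of $[a,b)$ live modulo $\wp(F)$ in the first slot and modulo $F^{\times 2}$ in the second, so simplifications that are automatic in characteristic different from $2$ have to be justified by explicit isometries. Once this comparison is settled, the corollary follows immediately from the product decomposition displayed above.
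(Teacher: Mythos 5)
The high-level structure you propose is sound, and it does diverge from the paper's route: you observe that each factor of the Clifford algebra is a totally decomposable quadratic pair on the split algebra $M_8(F)$, hence adjoint to some $3$-fold Pfister form $\tilde\pi$, and then propose to prove $\tilde\pi \simeq \pi$ by computing cohomological invariants. This would work in principle — two $3$-fold Pfister forms with the same class in $I_q^3(F)/I_q^4(F)$ are isometric, by Arason--Pfister. But that comparison is precisely where all the work lies, and you have deferred it entirely, explicitly flagging it as ``routine in principle but demands care.'' What you have written is a plan, not a proof.

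Moreover the partial computation you sketch is already off: in characteristic $2$ one has $\alpha[1,\gamma]\simeq[\alpha,\alpha^{-1}\gamma]$, not $[\alpha,\alpha\gamma]$ (substitute $y\mapsto \alpha y$ in $\alpha(x^2+xy+\gamma y^2)$). Similar care is needed for the slots of the quaternion symbols $Q_i=[a_ib_i,a_ia_4)$ read off from \cref{ex:cliffex}, so the sketched bookkeeping would need to be redone from scratch.

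The paper avoids the invariant comparison altogether. It applies \cref{splitdec.prop} with the explicit decomposition $\pi\simeq[a,a^{-1}c]\perp[b,b^{-1}c]\perp[ab,(ab)^{-1}c]\perp[1,c]$ to write the Clifford algebra factor as $\bigl([c,a)\otimes[c,b)\otimes[c,ab),\can\otimes\can\otimes\can,f_\otimes\bigr)$, and then decomposes $\Ad_\pi$ itself into the same form by chaining specific results: $\Ad_\pi\simeq \Ad_{\pff{a}}\otimes\Ad_{\pfr{b,c}}$, then $\Ad_{\pfr{b,c}}\simeq([c,b),\can)\otimes([c,b),\can,f)$ via \cite[(5.5)]{dolphin:quadpairs} and \cite[(2.9)]{dolphin:conic}, and $([0,a),\tau)\otimes([c,b),\can,f)\simeq([c,a)\otimes[c,ab),\can\otimes\can,f_\otimes)$ via \cite[(5.5)]{dolphin:totdecomp}, concluding with \cref{part}. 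If you want to complete your proposal, you would either need to carry out this sort of lemma-chain yourself, or carefully compute the $e_3$-invariant of the tensor-product quadratic pair and check it matches $[\pi]$; neither is done here.
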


\begin{proof}
 For fields of characteristic different from $2$, this follows directly from \cite[(35.1)]{KMRT} and the uniqueness of the semi-trace. Assume now that the characteristic of $F$ is $2$.
Let $\pi= \pfr{a,b,c}$.
Using the isometry $x[1,y]\simeq [x,x^{-1}y]$ for $x\in F^\times$ and $y\in F$, we obtain that 
$$\pi \simeq     [a,a^{-1}c]  \perp   [b,b^{-1}c]  \perp [ab,(ab)^{-1}c]  \perp [1,c]\,.  $$
Hence, by \cref{splitdec.prop}, we have 
$$(\C(\Ad_\pi),\underline{\sigma_\pi}, \underline{f_\pi})=  \bigl([c, a)\otimes[c,b)\otimes[c,ab),\can\otimes\can \otimes \can, f_\otimes\bigl)\otimes_F (F\times F)\,.$$

On the other hand,  $$\Ad_\pi\simeq\Ad_{\pff{a}}\otimes \Ad_{\pfr{b,c}}\,.$$ 
Since $\pfr{b,c}$ is the norm form of the quaternion algebra $[b,c)$, using~\cite[(5.5)]{dolphin:quadpairs} and~\cite[(2.9)]{dolphin:conic} we get 
$$\Ad_\pi\simeq  ([0,a),\tau)\otimes ([c,b),\can) \otimes ([c,b),\can, f)\,, $$
where  for the quaternion basis $(1,u,v,w)$ of $[0,a)$ the orthogonal  involution  $\tau$  is characterised by $\tau(u)=u$ and $\tau(v)=v$ and $f$ is any semi-trace on $([c,b),\can)$.
Finally, for a particular choice of the  the semi-trace $f$, the isomorphism from \cite[(5.5)]{dolphin:totdecomp} gives us 
 $$([0,a),\tau)\otimes  ([c,b),\can, f) \simeq  ([c,a)\otimes [c,ab),\can\otimes \can, f_\otimes)\,. $$
This finishes the proof by \cref{part}. 
\end{proof} 

We also prove the following extension of \cite[(8.5)]{KMRT}.

\begin{cor}\label{isohypo}
Let $q$ be a nonsingular  quadratic form over $F$ of even dimension $2m\geq 8$ with $m$ even. 
If $q$ is isotropic then  $(\C_0(\Ad_q),\underline{\sigma_q},\underline{f_q})$ is hyperbolic.
\end{cor}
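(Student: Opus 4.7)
The plan is to exhibit an explicit hyperbolicity witness in $\C_0(V,q)$, namely an idempotent $u$ satisfying $u+\underline{\sigma_q}(u)=1$ which moreover determines the canonical semi-trace $\underline{f_q}$. Since the characteristic-zero case is already covered by \cite[(8.5)]{KMRT} (where the semi-trace is uniquely determined by the orthogonal involution), the real content is the characteristic $2$ case, on which I will focus.

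The construction is the following. Since $q$ is nonsingular and isotropic, its Witt decomposition contains a hyperbolic plane, so one can choose vectors $e,e'\in V$ with $q(e)=q(e')=0$ and $b_q(e,e')=1$. Set $u=ee'\in\C_0(V,q)$. Using the Clifford relations $e^2=q(e)=0$, $(e')^2=q(e')=0$ and $ee'+e'e=b_q(e,e')=1$, a direct computation gives
\[
u^2 \;=\; e(e'e)e' \;=\; e(1-ee')e' \;=\; ee' - e^2(e')^2 \;=\; u,
\]
so $u$ is idempotent. Since the canonical involution restricts to the identity on $V$ and reverses products, $\underline{\sigma_q}(u)=e'e=1-u$. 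In particular $u$ defines a semi-trace on $(\C_0(V,q),\underline{\sigma_q})$.

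To finish I would invoke \cref{split.prop} and \cref{split.rem}: under the canonical identification $\bigl(\C_0(V,q),\underline{\sigma_q},f_{e,e'}\bigr)\simeq\bigl(\C(\Ad_q),\underline{\sigma_q},\underline{f_q}\bigr)$, the semi-trace $\underline{f_q}$ is precisely the one determined by $u=ee'$, independently of the chosen hyperbolic pair. Hence $u$ is an idempotent of $\C_0(V,q)$ satisfying $\underline{\sigma_q}(u)=1-u$ and $\underline{f_q}(s)=\Trd_{\C_0(q)}(us)$ for all $s\in\Sym(\C_0(V,q),\underline{\sigma_q})$. By the standard characterization of hyperbolic quadratic pairs (see \cite[\S 6]{KMRT}), this is exactly what it means for $(\C_0(\Ad_q),\underline{\sigma_q},\underline{f_q})$ to be hyperbolic.

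The main potential obstacle is bookkeeping about the centre of $\C_0(q)$: when $\disc(q)$ is non-trivial the Clifford algebra has centre a quadratic field extension $K/F$ and the canonical involution is $K$-linear (and if $\disc(q)$ is trivial, $\C_0(q)$ splits as a direct product of two algebras, each of which must be treated separately). In either case the argument above goes through unchanged, because the idempotent $u=ee'$ lies in $\C_0(q)$ regardless, and the hyperbolicity criterion via an idempotent determining the semi-trace is insensitive to whether the Clifford algebra is simple or a product. The condition $m$ even in the hypothesis is needed to ensure that the canonical quadratic pair $\underline{f_q}$ is defined in the first place, via \cref{semitrace.def}; this is why the statement requires $2m\geq 8$ with $m$ even.
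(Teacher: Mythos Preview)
Your argument is correct and takes a genuinely different route from the paper's. The paper proceeds by writing $q\simeq[1,0]\perp q'$ and invoking the tensor decomposition of \cref{splitdec.prop}: one quaternion factor is then the split algebra $[0,c)$, and since the canonical semi-trace $f_\otimes$ is induced by an \emph{arbitrary} semi-trace on any single factor (\cref{part}), one may choose that factor to be $\Ad_\HH$, whence hyperbolicity of the tensor product. Your proof instead exhibits the hyperbolicity idempotent $u=ee'$ directly inside $\C_0(q)$ from an isotropic pair $(e,e')$, checks $u^2=u$ and $\underline{\sigma_q}(u)=1-u$ from the Clifford relations, and then uses \cref{split.prop} to identify $u$ as an element determining $\underline{f_q}$; the conclusion is then immediate from the idempotent criterion for hyperbolic quadratic pairs \cite[(6.13)]{KMRT}. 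Your approach is more elementary in that it bypasses the full tensor decomposition of \cref{splitdec.prop} and gives an explicit hyperbolicity witness; the paper's approach has the virtue of reusing machinery already in place and making the tensor-factor source of hyperbolicity visible. Your remark about the centre is apt but harmless: whether the centre is a field or $F\times F$, the criterion \cite[(6.13)]{KMRT} applies over the centre, and your idempotent $u$ lies in $\C_0(q)$ and restricts correctly to each component.
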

\begin{proof} For the case where $F$ is of characteristic different from $2$, see  \cite[(8.5)]{KMRT}. We now assume that $F$ is of characteristic $2$. 
As $q$ is isotropic we have $q\simeq \HH\perp q'\simeq [1,0]\perp q'$ for some nonsingular quadratic form $q'$ over $F$.
Hence we may assume $a_1=1$ and $b_1=0$ in \cref{splitdec.prop}, and we get 
$$(\C_0(\Ad_q),\underline{\sigma_q}, \underline{f_q})\simeq ([0,c),\can,f)\otimes (B,\tau)\otimes_F K$$
for some $c\in F^\times$, some arbitrary choice of a semi-trace $f$ on $([c,0),\can)$, and some  $F$-algebra with  
symplectic involution $(B,\tau)$ (see \cref{differentdecomp}). Since $[0,c)$ is a split algebra, we may choose $f$ so that 
$([0,c),\can,f)$ is the adjoint of a hyperbolic plane, and it follows that the Clifford algebra $(\C_0(\Ad_q),\underline{\sigma_q},\underline{f_q})$ is hyperbolic. 
\end{proof}

\section{Triality} 
\label{sec:tri}

\subsection{An action of $A_3$ on $\PGO^+(n)$}
\label{sub:action}

Let $\co$ be a Cayley algebra, and denote by $\star$ its para-Cayley product, defined by $x\star y=\bar x\bar y$, see~\cite[\S 34.A]{KMRT}. The algebra $(\co,\star,n)$ is a symmetric composition algebra, where $n$ is the norm form of $\co$. In particular, the norm form is multiplicative, that is 
$n(x\star y)=n(x)n(y)$ for all $x,y\in\co$. Moreover, 
 we have 
\begin{equation}
\label{comp.eq} \forall x,y\in \co,\ \ \ x\star(y\star x)=n(x) y=(x\star y)\star x
\end{equation} see~\cite[(34.1)]{KMRT}.
In this section, we describe an action of $A_3$ on $\PGO^+(n)$, induced by the algebra structure of $\co$, and which will be used to study algebras with quadratic pairs of degree $8$ and trivial discriminant. Similar computations were recently made by Alsaody and Gille \cite[\S 4]{AG}, where they work over more general base (a unital commutative ring), and consider triples of isometries, while we consider triples of similitudes. Our approach follows \cite[\S 35]{KMRT}, with the additional ingredient that the Clifford algebra is induced with a canonical quadratic pair rather than just an involution. 

The main result in this section is the following : 
\begin{prop} 
\label{triple}
Let $t$ be a proper similitude of $(\co,n)$ with multiplier $\mu(t)$. There exist proper similitudes $(t^+,t^-)$ of $(\co,n)$ such that 
\begin{enumerate}
\item[(a)] $ t^+(x\star y)=\mu(t^+)\,t(x)\star t^-(y);$
\item[(b)] $ t(x\star y)=\mu(t)\,t^-(x)\star t^+(y);$
\item[(c)] $t^-(x\star y)=\mu(t^-)\,t^+(x)\star t(y).$
\end{enumerate} 
The pair $(t^+,t^-)$ is uniquely defined up to a factor $(\lambda^{-1},\lambda)$ for some $\lambda\in F^\times$ and the multipliers satisfy $\mu(t^+)\mu(t)\mu(t^-)=1$. 
\end{prop}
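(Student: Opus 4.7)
The plan is to follow the strategy of \cite[\S 35]{KMRT}, adapting it to arbitrary characteristic using the canonical quadratic pair on the Clifford algebra developed in \cref{sec:canqp}. The starting point is that the norm form $n$ of the Cayley algebra $\co$ is a $3$-fold Pfister form with trivial discriminant. By \cref{pfister.cor}, there is a canonical isomorphism of algebras with quadratic pair
\[\bigl(\C(\Ad_n),\underline{\sigma_n},\underline{f_n}\bigr)\simeq \Ad_n\times \Ad_n.\]
The proper similitude $t$ defines an automorphism $\Int(t)$ of $\Ad_n$, which by \cref{aut.prop} induces an automorphism of the Clifford algebra preserving the canonical quadratic pair. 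Since $t$ is proper, this induced automorphism fixes the centre of $\C(\Ad_n)$ pointwise, and therefore preserves each of the two $\Ad_n$ factors in the decomposition above.

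Applying Skolem-Noether separately to each factor then yields the required pair. Any automorphism of $(\End(\co),\sigma_n,f_n)$ is of the form $\Int(u)$ for some $u\in\End(\co)^\times$, and compatibility with the quadratic pair forces $u$ to be a similitude of $(\co,n)$, uniquely determined up to a scalar factor. This produces similitudes $t^+,t^-$ of $(\co,n)$, which are in fact proper because $\Int(t^+)\oplus\Int(t^-)$ induces an automorphism of $\C(\Ad_n)\times\Ad_n$ fixing the centre, which is characteristic of proper similitudes under triality. The uniqueness up to a factor $(\lambda^{-1},\lambda)$ follows from the scalar ambiguity in Skolem-Noether combined with identity (a), which forces the two scalars to be reciprocal.

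To extract the three cyclic identities, use the Clifford representation arising from the para-Cayley structure itself: by~\eqref{comp.eq}, the map $x\mapsto\bigl(\begin{smallmatrix}0&\ell_x\\ r_x&0\end{smallmatrix}\bigr)$ from $\co$ to $\End(\co\oplus\co)$, where $\ell_x(y)=x\star y$ and $r_x(y)=y\star x$, extends to an algebra homomorphism $\C(\co,n)\to\End(\co\oplus\co)$. Restricting to the even part gives an explicit realisation of $\C_0(n)\simeq\End(\co)\times\End(\co)$ in which the two $\Ad_n$ factors of \cref{pfister.cor} act on the two summands of $\co\oplus\co$ through the left and right para-Cayley multiplications respectively. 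Tracking how $\C(\Int(t))$ acts on the Clifford algebra generators $c(\ell_x)$ and $c(r_x)$, while simultaneously restricting to $\Int(t^+)$ and $\Int(t^-)$ on the two factors, translates directly into the three cyclic compatibility conditions. The multiplier relation $\mu(t^+)\mu(t)\mu(t^-)=1$ then follows by evaluating $n$ on both sides of (b): the left-hand side gives $\mu(t)n(x)n(y)$, while the right-hand side gives $\mu(t)^2\mu(t^+)\mu(t^-)n(x)n(y)$.

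The main obstacle is this last derivation: while the existence of $(t^+,t^-)$ is an essentially formal consequence of Skolem-Noether applied to each factor of $\C(\Ad_n)\simeq\Ad_n\times\Ad_n$, producing the explicit relations (a), (b), (c) requires making the isomorphism of \cref{pfister.cor} concrete and compatible with the para-Cayley multiplication, so that the action of $t$ on the Clifford algebra translates into the prescribed intertwining conditions. This is the step where the specific structure of the Cayley algebra, beyond the general Clifford algebra formalism, is essential, and where the choice of identifications must be made carefully to obtain the symmetric cyclic form of the three identities.
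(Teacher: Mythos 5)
Your overall strategy matches the paper's: realize $\C_0(n)\simeq\End_F(\co)\times\End_F(\co)$ through the para-Cayley representation $x\mapsto\bigl(\begin{smallmatrix}0&\ell_x\\ r_x&0\end{smallmatrix}\bigr)$, observe via \cref{triple.lem} that this identification carries the canonical quadratic pair to $\ad_n\times\ad_n$, invoke \cref{aut.prop} and Skolem--Noether to produce $t^+,t^-$, and extract the intertwining relations. However, the central step --- translating conjugation on the Clifford algebra into the explicit identities (a), (b), (c) --- is only gestured at, and the gesture is off: $\ell_x$ and $r_x$ are elements of $\End_F(\co)$, not of $\C_0(n)$, so ``$c(\ell_x)$ and $c(r_x)$'' are not Clifford generators and there is nothing to track in the sense you describe. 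The paper resolves this by introducing the whole family $\Psi_t$ ($\psi_t(x)=\bigl(\begin{smallmatrix}0&\ell_{t(x)}\\ \mu(t)^{-1}r_{t(x)}&0\end{smallmatrix}\bigr)$) and proving the two key facts: $\Psi_t = \Psi_1\circ\C_0(t)$ (a direct computation from $\C_0(t)(xy)=\mu(t)^{-1}t(x)t(y)$ and the product formula \eqref{PsiProd}) and $\Psi_t=\Int(S)\circ\Psi_1$ with $S=\mathrm{diag}(s_0,s_2)$ by Skolem--Noether on the full $\C(n)$. Equating the off-diagonal blocks of $\psi_t(x)=S\psi_1(x)S^{-1}$ gives $\ell_{t(x)}=s_0\ell_xs_2^{-1}$ and $\mu(t)^{-1}r_{t(x)}=s_2r_xs_0^{-1}$, whence (a) and (c) after normalizing $t^+=\mu(s_0)^{-1}s_0$, $t^-=s_2$, and (b) follows from (a) and (c) by the known algebraic identity. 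Without this explicit comparison you do not actually obtain the intertwining formulas, only abstract existence of some pair.

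Two further issues. First, your derivation of $\mu(t^+)\mu(t)\mu(t^-)=1$ from (b) is harmless but awkward, since (b) is itself deduced from (a) and (c); applying $n$ to (a) gives the same relation directly. Second, and more seriously, your argument that $t^+,t^-$ are proper is circular: you appeal to ``fixing the centre is characteristic of proper similitudes under triality,'' which is precisely the trialitarian structure being established. The paper argues by exclusion: an improper component would force an intertwining of the ``improper'' shape as in \cite[(35.4)(5)(6)]{KMRT} (arguments in the opposite order), incompatible with (a)--(c). You need some concrete argument of this kind; the appeal to a not-yet-proven symmetry does not close the gap.
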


From this, we derive an action of the alternating group $A_3\simeq \mz/3$ on $\PGO^+(n)$ as follows. 
Given a proper similitude $t$ of $(\co,n)$ we denote by $[t]$ its class in $\PGO^+(n)$. It is clear from the relations above that $[t^{++}]=[t^-]$ and $[t^{+++}]=[t]$. Hence, we get :
\begin{cor}
The assignments $[t]\mapsto \theta^+([t])= [t^+]$ and $[t]\mapsto \theta^-([t])=[t^-]$ define an action of $A_3$ on $\PGO^+(n)$.  
\end{cor}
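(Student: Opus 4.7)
The plan is to verify three things: that the assignments $\theta^\pm$ are well-defined set maps on $\PGO^+(n)$, that $\theta^+\circ\theta^+=\theta^-$, and that $\theta^+\circ\theta^-=\mathrm{id}=\theta^-\circ\theta^+$. Together these imply that $\theta^+$ has order dividing $3$ with inverse $\theta^-$, yielding the claimed action of $A_3\simeq\mz/3$.

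For well-definedness I must check two ambiguities: the choice of representative $t$ for the class $[t]\in\PGO^+(n)$, and the choice of the pair $(t^+,t^-)$ for a fixed $t$. The second is immediate: the Proposition asserts the pair is unique up to $(\lambda^{-1},\lambda)$, so their projective classes are unambiguous. For the first, if I replace $t$ by $\lambda t$ with $\lambda\in F^\times$, I would substitute the candidate pair $\bigl((\lambda t)^+,(\lambda t)^-\bigr)=(\beta t^+,\alpha t^-)$ into the identities (a)--(c), using $\mu(\lambda t)=\lambda^2\mu(t)$, and check directly that all three identities are satisfied iff $\alpha\beta=\lambda^{-1}$. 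In particular such $\alpha,\beta$ exist, and in $\PGO^+(n)$ we have $[(\lambda t)^\pm]=[t^\pm]$, so $\theta^\pm$ descend to well-defined maps $\PGO^+(n)\to\PGO^+(n)$.

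The heart of the argument is the cyclic identity $\theta^+\circ\theta^+=\theta^-$. Applying the Proposition to the proper similitude $t^+$ produces a pair $\bigl((t^+)^+,(t^+)^-\bigr)$, unique up to $(\lambda^{-1},\lambda)$, satisfying the three identities with base $t^+$. I would then observe that the candidate pair $\bigl((t^+)^-,(t^+)^+\bigr):=(t,t^-)$ already satisfies these identities: indeed, the analogues of (a), (b), (c) with base $t^+$ and companions $t,t^-$ are, line by line, nothing but the identities (c), (a), (b) for the original triple $(t,t^+,t^-)$. By the uniqueness clause this forces $[(t^+)^+]=[t^-]$ and $[(t^+)^-]=[t]$ in $\PGO^+(n)$, giving simultaneously $\theta^+\circ\theta^+=\theta^-$ and $\theta^-\circ\theta^+=\mathrm{id}$. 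An entirely parallel argument, applied to $t^-$ with candidate pair $\bigl((t^-)^+,(t^-)^-\bigr):=(t,t^+)$ verifying the identities via cyclic permutation of (a)--(c), yields $[(t^-)^+]=[t]$ and hence $\theta^+\circ\theta^-=\mathrm{id}$.

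I do not anticipate a serious obstacle: the entire content of the corollary is a bookkeeping consequence of the $\mz/3$-symmetry already visible in the three identities (a)--(c), combined with the uniqueness clause of the Proposition to collapse the scalar ambiguities in $\PGO^+(n)$. The only point requiring a little care is making sure that when applying the Proposition with the base similitude changed from $t$ to $t^+$ or to $t^-$, the new identities labelled (a)--(c) really do match, up to relabelling of the roles of the three similitudes, the identities already established for $t$; this is the cyclic symmetry one expects from triality.
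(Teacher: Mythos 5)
Your proof is correct and follows the same approach as the paper: the paper's justification is simply that ``it is clear from the relations'' in \cref{triple} that $[t^{++}]=[t^-]$ and $[t^{+++}]=[t]$, and your argument spells out exactly why this is clear, namely that $(t^-,t)$ satisfies the defining identities for the pair $((t^+)^+,(t^+)^-)$ (they are a cyclic permutation of (a)--(c)), so uniqueness in \cref{triple} forces the claimed equalities in $\PGO^+(n)$. Your extra check that $\theta^\pm$ are well defined on classes $[t]\in\PGO^+(n)$, rather than on similitudes $t$ themselves, is a correct and worthwhile detail that the paper leaves implicit.
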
 

\begin{remark}
In \cite{AG}, Alsaody and Gille give an explicit description of the spin group $\Spin(n)$ with its trialitarian action (see \cite[Lem.~3.3 and Thm.~3.9]{AG}). Their description is in terms of so-called related triples, which correspond to triples as in our \cref{triple}, except that $t$, $t^+$ and $t^-$ are isometries rather than similitudes. It is clear from their work that the action described in this section is the induced trialitarian action on $\PGO^+(n)$. The explicit description we provide could also be deduced from their results by fppf descent. 
\end{remark}

The remainder of this section outlines the proof of Proposition~\ref{triple}. The argument is mostly borrowed from~\cite[\S 34]{KMRT}, except for Lemma~\ref{triple.lem} which adds the canonical quadratic pair to the picture.

For all $x\in \co$, we denote by $r_x$ and $\ell_x$ the endomorphisms of $\co$ defined by $r_x(y)=y\star x$ and $\ell_x(y)=x\star y$. We first prove : 
\begin{lem} Let $t$ be a proper similitude of $(\co,n)$. The map 
\[\psi_t:\,\co\rightarrow \End_F(\co\oplus\co)\] defined by 
\[\psi_t(x)=\left(\begin{array}{cc}
0&\ell_{t(x)}\\\mu(t)^{-1}r_{t(x)}&0\\\end{array}\right)\]
induces isomorphisms $\C(n)\simeq \End_F(\co\oplus\co)$ and 
$\C_0(n)\simeq \End_F(\co)\times\End_F(\co)$, which we denote by $\Psi_t$. 
\end{lem}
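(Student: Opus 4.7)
The plan is to apply the universal property of the Clifford algebra, then conclude by central simplicity and a dimension count. First I verify the Clifford relation $\psi_t(x)^2 = n(x)\Id$, so that $\psi_t$ extends to an algebra homomorphism $\Psi_t: \C(n) \to \End_F(\co \oplus \co)$. Then I show this is an isomorphism by comparing dimensions. Finally, I observe that products of two matrices $\psi_t(x)\psi_t(y)$ are block diagonal, which yields the decomposition of the even part.

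For the Clifford relation, a direct block computation gives
\[\psi_t(x)^2 = \begin{pmatrix} \mu(t)^{-1} \ell_{t(x)} \circ r_{t(x)} & 0 \\ 0 & \mu(t)^{-1} r_{t(x)} \circ \ell_{t(x)} \end{pmatrix}.\]
By the composition identity \eqref{comp.eq}, $\ell_y \circ r_y = r_y \circ \ell_y = n(y) \Id_{\co}$ for all $y \in \co$, and since $t$ is a similitude, $n(t(x)) = \mu(t) n(x)$. The factor $\mu(t)^{-1}$ absorbs the multiplier exactly, so both diagonal blocks equal $n(x) \Id_{\co}$. By the universal property of $\C(n)$, $\psi_t$ then extends to a unique $F$-algebra homomorphism $\Psi_t$.

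To conclude, since $n$ is a nonsingular $8$-dimensional quadratic form (a $3$-fold Pfister form), $\C(n)$ is central simple over $F$ of dimension $2^8 = 256 = \dim_F \End_F(\co \oplus \co)$. The homomorphism $\Psi_t$ is nonzero (it sends $1$ to $\Id$), hence injective by simplicity, and equality of dimensions forces it to be an isomorphism. For the even part, a similar computation shows $\psi_t(x)\psi_t(y)$ is block diagonal, so $\Psi_t$ sends $\C_0(n)$ into the subalgebra $\End_F(\co) \times \End_F(\co)$ of block-diagonal matrices; both have dimension $128$, giving the restricted isomorphism. The only subtle point, which is hardly a true obstacle, is choosing the normalisation by $\mu(t)^{-1}$ so that the Clifford relation holds exactly, without any leftover multiplier; everything else is a formal application of the universal property and a dimension count.
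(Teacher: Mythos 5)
Your proof is correct and follows essentially the same route as the paper: compute $\psi_t(x)\psi_t(y)$ in block form, use the composition identity~\eqref{comp.eq} together with $n(t(x))=\mu(t)n(x)$ to verify the Clifford relation, extend by the universal property, conclude by simplicity and the dimension count $256=256$, and then note that products of pairs land in the block-diagonal subalgebra to get the even-part isomorphism. The only cosmetic difference is that you spell out $\ell_y\circ r_y = r_y\circ\ell_y = n(y)\Id$ explicitly, which the paper leaves implicit.
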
 
\begin{proof}
A direct computation shows that for all $x,y\in\co$, we have 
\begin{equation}
\label{PsiProd}
\psi_t(x)\psi_t(y)=\mu(t)^{-1}
 \left(\begin{array}{cc}
\ell_{t(x)}\circ r_{t(y)}&0\\0&r_{t(x)}\circ\ell_{t(y)}\\\end{array}\right). 
\end{equation}
In view of~\eqref{comp.eq}, it follows that $\psi_t(x)^2=\mu(t)^{-1}n(t(x))=n(x)$. 
By the universal property of the Clifford algebra, we get a non trivial map $\Psi_t:\,\C(n)\rightarrow\End_F(\co\oplus\co)$, which is an isomorphism since both algebras are simple and of the same dimension. The computation of $\psi_t(x)\psi_t(y)$ above shows that this isomorphism sends $\C_0(n)$ to $\End_F(\co)\times\End_F(\co)$, which embeds diagonally in $\End_F(\co\oplus\co)$. 
\end{proof}
Assume now that $t=\id$ and consider the corresponding isomorphisms, denoted by $\Psi_1$. 
The next lemma is a refined version of~\cite[(35.1)]{KMRT} (see also~\cite[Prop. 3.10]{AG}):
\begin{lem} 
\label{triple.lem}
The isomorphism $\Psi_1$ restricted to the even Clifford algebra is an isomorphim of algebras with quadratic pairs 
\[\Psi_1:\,(\C_0(n),\underline{\sigma_n},\underline{f_n}\bigr)\rightarrow \Ad_n\times\Ad_n,\] 
where $(\underline{\sigma_n},\underline{f_n})$ stands for the canonical quadratic pair on $\C_0(n)$.\end{lem}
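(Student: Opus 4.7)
The isomorphism of $F$-algebras with involution $\Psi_1\colon (\C_0(n),\underline{\sigma_n})\to \Ad_n\times\Ad_n$ is already given by the previous lemma together with~\cite[(35.1)]{KMRT}, so only compatibility with the semi-traces remains to be established. My plan is to use~\cref{split.prop} to pin down $\underline{f_n}$: fixing a pair $(e,e')\in \co$ with $b_n(e,e')=1$, the canonical semi-trace $\underline{f_n}$ is the one determined by the element $u=ee'\in \C_0(n)$. Equation~\eqref{PsiProd}, applied with $t=\id$ and $\mu(t)=1$, then yields
\[\Psi_1(u)=(\ell_e\circ r_{e'},\, r_e\circ \ell_{e'})\in \End_F(\co)\times \End_F(\co),\]
and it suffices to prove that each of these two endomorphisms is an $\ell$-element in $(\End_F(\co),\sigma_n)$ that induces the canonical semi-trace $f_n$ on $\Ad_n$.

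For the $\ell$-element property I would use the associativity identity $b_n(x\star y,z)=b_n(x,y\star z)$ valid in any symmetric composition algebra (see~\cite[(34.1)]{KMRT}); it immediately gives $\sigma_n(\ell_y)=r_y$ and $\sigma_n(r_y)=\ell_y$ for every $y\in \co$, and hence $\sigma_n(\ell_e\circ r_{e'})=\ell_{e'}\circ r_e$. Linearizing the composition identity $x\star(y\star x)=n(x)y$ from~\eqref{comp.eq} in $x$ produces
\[ e\star(y\star e')+e'\star(y\star e)=b_n(e,e')\,y=y \qquad \text{for all } y\in\co, \]
so $\ell_e\circ r_{e'}+\ell_{e'}\circ r_e=\id$, as required. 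The same argument, with the roles of $e$ and $e'$ swapped and using the twin identity $(x\star y)\star x=n(x)y$, handles $r_e\circ\ell_{e'}$.

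To identify the induced semi-trace with $f_n$, I would invoke~\cite[(5.11)]{KMRT}: $f_n$ is the unique semi-trace on $\Ad_n$ satisfying $f_n(\varphi_n(x\otimes x))=n(x)$ for all $x\in\co$. Since $\varphi_n(x\otimes x)$ acts as the rank-one endomorphism $y\mapsto x\cdot b_n(x,y)$, the corresponding trace is
\[ \Trd_{\End_F(\co)}\bigl((\ell_e\circ r_{e'})\circ\varphi_n(x\otimes x)\bigr)=b_n\bigl(e\star(x\star e'),\,x\bigr),\]
and applying the associativity identity followed by~\eqref{comp.eq} gives $b_n(e,(x\star e')\star x)=b_n(e,n(x)e')=n(x)$. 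The analogous computation for $r_e\circ\ell_{e'}$ yields $b_n((e'\star x)\star e,x)=b_n(x\star(e'\star x),e)=n(x)$, so each component indeed induces $f_n$, and the proof is complete. The only real obstacle is keeping careful track of the symmetric composition identities, particularly the cyclic manipulations of $b_n(a\star b,c)$; no deeper ingredient beyond~\cref{split.prop} and~\eqref{comp.eq} is needed.
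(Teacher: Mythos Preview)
Your proof is correct and follows essentially the same route as the paper: both use \cref{split.prop} to pin down the $\ell$-element $ee'$, push it through $\Psi_1$ via~\eqref{PsiProd}, and then verify via the criterion of \cite[(5.11)]{KMRT} that each component $\ell_e\circ r_{e'}$ and $r_e\circ\ell_{e'}$ induces $f_n$, the final trace computation reducing to the composition identity~\eqref{comp.eq}. Your direct check for all $x\in\co$ is marginally cleaner than the paper's reduction to symplectic basis vectors, and your separate verification that $\ell_e\circ r_{e'}+\sigma_n(\ell_e\circ r_{e'})=\id$ is redundant (it follows automatically from $\Psi_1$ preserving the involution and $ee'+\underline{\sigma_n}(ee')=1$ in $\C_0(n)$) but harmless.
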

\begin{remark}
Since all $3$-fold Pfister forms are norm forms of some Cayley algebras, this lemma gives a new proof of \cref{pfister.cor}. 
\end{remark}
\begin{proof}
We already know that $\Psi_1$ is an isomorphism of algebras, and one may check it preserves the involution as in~\cite[Prop. 3.10]{AG}. It only remains to prove that it is compatible with the semi-traces. Therefore, we may assume that the characteristic of $F$ is $2$. Let $(e_i,e_i')_{1\leq i\leq 4}$ be a symplectic basis of $(\co,n)$. By \cref{split.prop}, the canonical semi-trace $\underline{f_n}$ on $\C_0(n)$ 
is determined by the element $e_1e_1'\in\C_0(n)$. Under the isomorphism $\Psi_1$, it corresponds to the semi-trace determined by the element 
$\Psi_1(e_1e_1')=\psi_1(e_1)\psi_1(e_1')=\left(\begin{array}{cc} 
\ell_{e_1}\circ r_{e_1'}&0\\
0&r_{e_1}\circ\ell_{e_1'}\\
\end{array}\right)$. 
Hence, we have to prove that the elements $\ell_{e_1}\circ r_{e_1'}$ and $r_{e_1}\circ\ell_{e_1'}\in\End_F(\co)$ determine the semi-trace $f_n$ associated to the norm form $n$. 
By~\cite[(5.11)]{KMRT}, this means we have to check that $\forall v\in\co$, 
\[\Trd_{\End_F(\co)}\bigl(\ell_{e_1}\circ r_{e_1'}\circ\varphi_n(v\otimes v)\bigr)=n(v),\] and similarly for the endomorphisms $r_{e_1}\circ\ell_{e_1'}\circ\varphi_n(v\otimes v)$, where $\varphi_n$ is the standard identification $\co\otimes\co\simeq \End_F(\co)$ defined in~\cite[(5.2)]{KMRT}. 
Since the value of a semi-trace is determined on symmetrised elements, it is enough to prove this equality when $v$ is one of the basis elements $(e_i,e_i')_{1\leq i\leq 4}$, 
see the proof of~\cite[(5.11)]{KMRT}. 
For all $x\in\co$, we have $\ell_{e_1}\circ r_{e_1'}\circ\varphi_n(e_i\otimes e_i)(x)=\bigl(e_1\star( e_i\star e_1')\bigr)\,b_n(e_i,x)$. 
This endomorphism maps all elements of the basis to $0$, except for $e_i'$. Hence its trace is the coordinate of $e_1\star( e_i\star e'_1)$ on $e_i'$, that is 
$b( e_1\star( e_i\star e'_1),e_i)$. By~\cite[\S 34]{KMRT}, we get 
\[b( e_1\star (e_i\star e'_1),e_i)=b(e_i,e_1\star (e_i\star e'_1))=b(e_i\star e_1,e_i\star e_1')=n(e_i)b(e_1,e'_1)=n(e_i),\]
as required. 
A similar computation shows the equality also holds for $e_i'\otimes e_i'$, and for the endomorphism $r_{e_1}\circ\ell_{e_1'}\circ\varphi_n(v\otimes v)$ instead of $\ell_{e_1}\circ r_{e_1'}\circ\varphi_n(v\otimes v)$, so the lemma is proved. 
\end{proof}

With this in hand, we may now prove Proposition~\ref{triple} as follows. Given a proper similitude $t$ of $(\co,n)$, consider the isomorphisms $\Psi_1$ and $\Psi_t$. 
By the Skolem-Noether theorem, there exists an invertible element  $S\in\End_F(\co\oplus\co)\simeq M_2(\End_F(\co))$ such that the following diagram commutes : 
\begin{equation*}
\xymatrix{%
\C(n)\ar[d]_{\Psi_1}\ar[dr]^{\Psi_t}&\\
\End_F(\co\oplus\co)\ar[r]_{\Int(S)}&\End_F(\co\oplus\co).}
\end{equation*}
Restriction to the even part of all three algebras shows that $\Int(S)$ preserves $\End_F(\co)\times \End_F(\co)\subset M_2(\End_F(\co))$, so that \[S=\begin{pmatrix}s_0&0\\0&s_2\\\end{pmatrix}\mbox{ for some }s_0,s_2\in\End_F(\co).\]
Recall that $t$ also induces an isomorphism $\C_0(t):\,\C_0(n)\rightarrow \C_0(n)$ which preserves the canonical quadratic pair by~\cref{aut.prop}. We claim that the following diagram is commutative : 
\begin{equation*}
\xymatrix{%
\C_0(n)\ar[d]_{\Psi_1}\ar[dr]^{\Psi_t}\ar[r]^{\C_0(t)}&\C_0(n)\ar[d]^{\Psi_1}\\
\End_F(\co)\times\End_F(\co)\ar[r]_{\Int(S)}&\End_F(\co)\times\End_F(\co).}
\end{equation*}
Indeed, the lower triangle is obtained from the previous commutative diagram by restriction to the even part. Since $\C_0(t)(xy)=\mu(t)^{-1}t(x)t(y)$ for all $x,y\in\co$ (see~\cite[(13.1)]{KMRT}), the upper triangle also commutes by a direct computation using \eqref{PsiProd}. In view of \cref{triple.lem} and \cref{aut.prop}, the automorphism $\Int(S)$ preserves the quadratic pair $\ad_n\times \ad_n$, so that $s_0$ and $s_1$ are similitudes of $(\co,n)$. 

Finally, since $\Psi_t=\Int(S)\circ \Psi_1$, we have for all $x\in\co$, $\psi_t(x)=S\psi_1(x)S^{-1}$. 
Hence, we get
$\mu(t)^{-1}r_{t(x)}=s_2 r_x s_0^{-1}$ and $\ell_t(x)=s_0\ell_x s_2^{-1},$
so that for all $y\in \co$, 
$\mu(t)^{-1} s_0(y)\star t(x)=s_2(y\star x)$ and  $t(x)\star s_2(y)=s_0(x\star y).$
Applying the norm $n$ to the second equality, we get $\mu(t)\mu(s_2)=\mu(s_0)$. 
Hence, the similitudes $t^+=\mu(s_0)^{-1}s_0$ and $t^-=s_2$ have 
$\mu(t)\mu(t^+)\mu(t^-)=1$ and satisfy equations (a) and (c) in \cref{triple}. 
Equation (b) follows from (a) and (c), as explained in~\cite[p.484]{KMRT}. 
Since 
\begin{equation}
\label{Int(S)}
\Psi_1\circ \C_0(t)\circ\Psi_1^{-1}=\Int\begin{pmatrix} 
t^+&0\\
0&t^-\\
\end{pmatrix},
\end{equation}the pair $(t^+,t^-)$ is unique up to a pair of scalars. The condition on the multipliers guarantees it actually is unique up to $(\lambda^{-1}, \lambda)$, for some $\lambda\in F^\times$. 
 It only remains to prove that $t^+$ and $t^-$ are proper; as explained in~\cite[\S 35.B]{KMRT}, if one of them was improper, it would satisfy a relation similar to~\cite[(35.4)(5)(6)]{KMRT} instead of relations (b) and (c) above. This concludes the proof. 

\begin{remark}
\label{triality.rem}
It follows from the proof that, given a proper similitude $t\in \PGO_8^+$, we have  $\theta^+([t])=[t^+]$ and $\theta^-([t])=[t^-]$, where $[t^+]$ and $[t^-]$ are characterized by equation~\eqref{Int(S)} above, and $\Psi_1$ is as in \cref{triple.lem}. 
\end{remark}

\subsection{Trialitarian triples}
A trialitarian triple over $F$ is an ordered triple of degree $8$ central simple algebras with quadratic pairs over $F$, 
\[\bigl((A,\sigma_A,f_A);(B,\sigma_B,f_B);(C,\sigma_C,f_C)\bigr),\] such that there exists an isomorphism 
\[\alpha_A:\,\bigl(\C(A,\sigma_A,f_A),\underline{\sigma_A},\underline {f_A}\bigr)\rightarrow (B,\sigma_B,f_B)\times (C,\sigma_C,f_C).\] 
Two such triples, denoted by $(A,B,C)$ and $(A',B',C')$ for short, are called isomorphic if there exists isomorphism of algebras with quadratic pairs \[\phi_A:\,(A,\sigma_A,f_A)\rightarrow (A',\sigma_{A'},f_{A'}),\]
\[\phi_B:\,(B,\sigma_B,f_B)\rightarrow (B',\sigma_{B'},f_{B'}),\]
\[\mbox{ and }\phi_C:\,(C,\sigma_C,f_C)\rightarrow (C',\sigma_{C'},f_{C'}),\]
and $\alpha_A$ and $\alpha_{A'}$ as above such that 
the following diagram commutes 
\begin{equation*}
\xymatrix{%
{\bigl(\C(A,\sigma_A,f_A),\underline{\sigma_A},\underline {f_A}\bigr)}\ar[r]^{\alpha_A}\ar[d]_{C(\phi_A)}&{(B,\sigma_B,f_B)\times (C,\sigma_C,f_C)}\ar[d]^{\phi_B\times\phi_C}\\
{\bigl(\C(A',\sigma_{A'},f_{A'}),\underline{\sigma_{A'}},\underline {f_{A'}}\bigr)}\ar[r] ^{\alpha_{A'}}&(B',\sigma_{B'},f_{B'})\times ({C'},\sigma_{C'},f_{C'}).}
\end{equation*}

\begin{remark}
If $(A,B,C)$ is a trialitarian triple, it follows from the definition that $\C(A,\sigma_A,f_A)$ has centre $F\times F$, hence the quadratic pair $(\sigma_A, f_A)$ has trivial discriminant (see~\cite[(7.7) \& (8.28)]{KMRT}). 
\end{remark} 

\begin{ex} 
Assume  $F$ is of characteristic $2$.
Let $q$ be an $8$-dimensional quadratic form with trivial Arf invariant, and let $(A,\sigma_A,f_A)=\Ad_q$. 
Pick a presentation  \[q=[a_1,b_1]\perp [a_2,b_2]\perp[a_3,b_3]\perp[a_4,b_4].\]  
By~\cref{splitdec.prop}, $(A,B,B)$ is a trialitarian triple, where $B$ stands for 
\[(B,\sigma_B,f_B)=([a_1b_1,a_1a_4)\otimes[a_2b_2,a_2a_4)\otimes[a_3b_3,a_3a_4),\can\otimes\can\otimes\can,f_\otimes\bigl).\]
\end{ex}
Hence if the algebra $A$ is split in a trialitarian triple $(A,B,C)$, then $B$ and $C$ are isomorphic. The converse also holds, as we now explain: 
\begin{lem}
\label{split.lem}
Let $(A,B,C)$ be a trialitarian triple. The following assertions are equivalent: 
\begin{enumerate}
\item The algebra $A$ is split; 
\item The triples $(A,B,C)$ and $(A,C,B)$ are isomorphic; 
\item The algebras with quadratic pairs $(B,\sigma_B,f_B)$ and $(C,\sigma_C,f_C)$ are isomorphic;
\item The algebras $B$ and $C$ are Brauer equivalent.  
\end{enumerate}
\end{lem}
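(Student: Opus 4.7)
The plan is to establish the equivalences along the cycle $(1)\Rightarrow(3)\Rightarrow(2)\Rightarrow(3)\Rightarrow(4)\Rightarrow(1)$, disposing of the formal implications first and isolating the substantive steps $(1)\Rightarrow(3)$ and $(4)\Rightarrow(1)$.

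The implications $(3)\Rightarrow(4)$ and $(2)\Rightarrow(3)$ are immediate: the former by forgetting the quadratic pair, and the latter because any isomorphism of triples $(A,B,C)\simeq(A,C,B)$ includes by definition an isomorphism $\phi_B\colon (B,\sigma_B,f_B)\to (C,\sigma_C,f_C)$. For $(3)\Rightarrow(2)$, given $\phi\colon (B,\sigma_B,f_B)\to (C,\sigma_C,f_C)$, set $\phi_A=\id_A$, $\phi_B=\phi$, $\phi_C=\phi^{-1}$, and $\alpha_{A'}=(\phi\times\phi^{-1})\circ \alpha_A\colon \C(A,\sigma_A,f_A)\to (C,\sigma_C,f_C)\times (B,\sigma_B,f_B)$; the required square then commutes by construction, yielding $(A,B,C)\simeq (A,C,B)$.

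For $(1)\Rightarrow(3)$, write $(A,\sigma_A,f_A)\simeq \Ad_q$ for some nonsingular $8$-dimensional quadratic form $q$, which has trivial discriminant by the preceding remark. In characteristic $2$, \cref{splitdec.prop} supplies an isomorphism
\[(\C(\Ad_q),\underline{\sigma_q},\underline{f_q})\simeq (D,\tau,f_\otimes)\otimes_F K,\]
with $K\simeq F\times F$, which explicitly exhibits the two components as mutually isomorphic algebras with quadratic pair. In characteristic different from $2$, the same conclusion follows from the classical decomposition of the even Clifford algebra, since the semi-trace in that setting is uniquely determined by the orthogonal involution.

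The remaining implication $(4)\Rightarrow(1)$ is the main obstacle: I would derive it from the fundamental relation between the Brauer classes of $A$ and the two components of $\C(A,\sigma_A,f_A)$. For degree $8$ with trivial discriminant this relation specializes to $[A]=[B]\cdot[C]$ in $\br(F)$, by the standard Brauer-class computation in~\cite[\S 9]{KMRT}, which extends to quadratic pairs in characteristic $2$ within the same framework. Since $B$ and $C$ carry involutions, $[B]$ and $[C]$ are $2$-torsion; if $B$ and $C$ are Brauer equivalent then $[A]=[B]^2=0$, so $A$ is split. The care needed here is in verifying that the cited Brauer relation carries over verbatim to the characteristic $2$ setting with quadratic pairs; once this is in place the implication is immediate.
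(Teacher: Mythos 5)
Your proof is correct, but it takes a genuinely different route from the paper on the substantive direction.  Where you prove $(1)\Rightarrow(3)$ directly from the explicit decomposition $(\C(\Ad_q),\underline{\sigma_q},\underline{f_q})\simeq (D,\tau,f_\otimes)\otimes_F K$ of \cref{splitdec.prop} (and its characteristic-$0$ analogue), the paper instead proves $(1)\Rightarrow(2)$ by a soft argument: if $A$ is split, an improper isometry of the quadratic space gives an automorphism $\phi_A$ of $(A,\sigma_A,f_A)$ such that $\C(\phi_A)$ acts nontrivially on the centre $F\times F$, and then composing $\alpha_A\circ\C(\phi_A)\circ\alpha_A^{-1}$ with the switch map produces a centre-fixing isomorphism $B\times C\to C\times B$, which must be of the form $(\phi_B,\phi_C)$ and assembles into an isomorphism of triples.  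That argument needs no knowledge of the Clifford algebra's internal structure beyond \cref{aut.prop}, whereas your route leans on the full decomposition theorem.  Conversely, you need to supply a separate $(3)\Rightarrow(2)$ step (your construction $\alpha_{A'}=(\phi\times\phi^{-1})\circ\alpha_A$ with $\phi_A=\id$ is correct and works fine), which the paper avoids by targeting $(2)$ directly.  One small point: your worry at the end about whether the fundamental relations from \cite[\S 9]{KMRT} carry over to characteristic $2$ is moot — (9.13) and (9.14) are stated there for quadratic pairs over an arbitrary base field, which is exactly why the paper cites them without comment.  Also, in $(1)\Rightarrow(3)$ you might spell out that an isomorphism $\C(\Ad_q)\to B\times C$ must carry the two central primitive idempotents to the two central primitive idempotents, so both $B$ and $C$ are isomorphic to the common factor $(D,\tau,f_\otimes)$; this is implicit in your phrasing but worth making explicit.
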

\begin{proof}
Assume $A$ is split, and consider an improper isometry of the underlying quadratic space. It induces an automorphism $\phi_A$ 
of $(A,\sigma_A,f_A)$ such that $\C(\phi_A)$ acts non trivially on $F\times F$. Therefore, if $\varepsilon :B\times C\rightarrow C\times B$ denotes the switch map, defined by $\varepsilon(x,y)=(y,x)$, then $\varepsilon\circ\alpha_A\circ \C(\phi_A)\circ\alpha_A^{-1}$ is an  isomorphism $B\times C\rightarrow C\times B$ which acts trivially on $F\times F$. Hence, it is equal to $(\phi_B,\phi_C)$ for some isomorphisms of algebras with quadratic pairs 
$\phi_B:\,B\rightarrow C$ and $\phi_C:C\rightarrow B$. This shows that $(\phi_A,\phi_B,\phi_C,\alpha_A,\varepsilon\circ\alpha_A)$ defines an isomorphism of triples between $(A,B,C)$ and $(A,C,B)$. 
Assertion (3) follows from (2) by definition, and it clearly implies (4). Finally, since $A$ has degree $8$, the so-called fundamental relations given in~\cite[(9.13) and (9.14)]{KMRT} prove that (4) implies (1). 
\end{proof} 

\begin{lem}
There is a bijection between $H^1(F,PGO_8^+)$ and isomorphism classes of trialitarian triples.  
\end{lem}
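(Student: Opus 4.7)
The plan is the standard Galois descent argument: identify trialitarian triples with $F$-twisted forms of a single split model whose automorphism group scheme is $\PGO_8^+$.

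First, I would fix a split triple. Let $\pi$ be a $3$-fold Pfister form over $F$; by~\cref{pfister.cor},
\[T_0:=\bigl((\Ad_\pi,\sigma_\pi,f_\pi),(\Ad_\pi,\sigma_\pi,f_\pi),(\Ad_\pi,\sigma_\pi,f_\pi)\bigr)\]
is a trialitarian triple, and~\cref{triple.lem} (applied after realising $\pi$ as the norm form of a Cayley algebra over a suitable extension) yields a canonical Clifford isomorphism $\alpha_0\colon(\C(\Ad_\pi),\underline{\sigma_\pi},\underline{f_\pi})\xrightarrow{\sim}\Ad_\pi\times\Ad_\pi$.

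Second, I would verify that every trialitarian triple $(A,B,C)$ becomes isomorphic to $T_0$ after base change to a separable closure $F^{\mathrm{sep}}$ of $F$. The remark following the definition implies that the quadratic pair $(\sigma_A,f_A)$ has trivial discriminant. Over $F^{\mathrm{sep}}$ the algebra $A$ splits, so $(A,\sigma_A,f_A)\otimes F^{\mathrm{sep}}\simeq\Ad_q$ for some nonsingular $8$-dimensional quadratic form $q$ with trivial discriminant over $F^{\mathrm{sep}}$; such a $q$ is necessarily a hyperbolic $3$-fold Pfister form, and hence isometric to $\pi\otimes F^{\mathrm{sep}}$. The Clifford components $B,C$ then split and become $\Ad_\pi\otimes F^{\mathrm{sep}}$ as well, giving the desired isomorphism of triples over $F^{\mathrm{sep}}$.

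Third, I would identify the automorphism group scheme $\gAut(T_0)$ with $\PGO_8^+$. With $\alpha_0$ fixed, an automorphism of $T_0$ is a triple $(\phi_A,\phi_B,\phi_C)\in\Aut(\Ad_\pi)^3$ satisfying $(\phi_B\times\phi_C)\circ\alpha_0=\alpha_0\circ\C(\phi_A)$. By~\cref{aut.prop} the map $\C(\phi_A)$ is automatically an isomorphism of algebras with quadratic pairs, and the above equation forces it to preserve each factor of $\Ad_\pi\times\Ad_\pi$; this holds if and only if $\phi_A$ is a proper similitude, i.e.\ $\phi_A\in\PGO_8^+$. Conversely, for any such $\phi_A$, \cref{triple} combined with~\cref{triple.lem} produces a unique (modulo the $(\lambda^{-1},\lambda)$-scaling, which is trivial in $\PGO$) pair $(\phi_B,\phi_C)=(\theta^+(\phi_A),\theta^-(\phi_A))$ making the diagram commute. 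This identifies $\gAut(T_0)\simeq\PGO_8^+$.

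The general Galois descent principle then yields the bijection $H^1(F,\PGO_8^+)=H^1(F,\gAut(T_0))$ with the pointed set of $F$-isomorphism classes of twisted forms of $T_0$, which by the second step is the set of isomorphism classes of trialitarian triples over $F$. The step I would treat most carefully is the compatibility between the paper's (unpointed) iso relation and the pointed iso relation with $\alpha_A$ fixed: two choices of $\alpha_A$ for a given $(A,B,C)$ differ by an automorphism of $B\times C$ of the form $\phi_B\times\phi_C$, which can be absorbed into the components $\phi_B,\phi_C$ of the iso, so the two notions produce the same iso classes of triples.
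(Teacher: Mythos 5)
Your route — Galois descent from a fixed split triple $T_0$ with $\gAut(T_0)\simeq\PGO_8^+$ — is the same idea as the paper's, which invokes the classification of $H^1(F,\PGO_8^+)$ by quadruples $(A,\sigma_A,f_A,\varepsilon_A)$ from \cite[\S29.F]{KMRT} and passes between quadruples and triples. (Two small points: to get the split group scheme $\PGO_8^+$ on the nose you should take $\pi$ to be the hyperbolic $3$-fold Pfister form, and the caveat ``over a suitable extension'' is unnecessary, as every $3$-fold Pfister form over $F$ is already the norm form of a Cayley algebra over $F$.)

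There is, however, a genuine gap in your last paragraph, which you correctly flag as the delicate step. You claim that two choices of $\alpha_A$ for a fixed triple $(A,B,C)$ differ by a factor-preserving automorphism $\phi_B\times\phi_C$ of $B\times C$. That fails precisely when $B\simeq C$ as algebras with quadratic pair, i.e.\ (by \cref{split.lem}) when $A$ is split: then $B\times C$ has automorphisms swapping its factors, and two isomorphisms $\alpha_A^{(1)},\alpha_A^{(2)}$ can indeed differ by such a swap. A swap cannot be absorbed into the components $\phi_B\colon B\to B'$, $\phi_C\colon C\to C'$ of an isomorphism of triples, so your well-definedness argument stalls exactly there. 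The paper treats this case separately: if the restrictions of $\alpha_A^{(1)}$ and $\alpha_A^{(2)}$ to the centre disagree, then $B\simeq C$ forces $A$ split, hence $(A,\sigma_A,f_A)$ admits an improper similitude, and precomposing with the induced Clifford automorphism reconciles the two pointed objects (quadruples). Without that additional argument the bijection is not established on the split locus.
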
 

\begin{proof}
Let $n_0$ be the $8$-dimensional hyperbolic quadratic form, so that $\PGO_8^+=\PGO^+(n_0)$. 
According to~\cite[\S 29.F]{KMRT}, $H^1(F,\PGO_8^+)$ corresponds bijectively to isomorphism classes of quadruples 
$(A,\sigma_A,f_A,\varepsilon_A)$, where $(A,\sigma_A,f_A)$ is a degree $8$ algebra with quadratic pair, and $\varepsilon_A:\,Z_A\rightarrow F\times F$ is a fixed isomorphism from the centre of the Clifford algebra of $(A,\sigma_A,f_A)$ and $F\times F$, which is the centre of $\C_0(n_0)$. 
To such a quadruple, we may associate a trialitarian triple $(A,B,C)$, where $B$ and $C$ are defined by 
\[\left\{\begin{array}{l} B=\C(A,\sigma_A,f_A)\,e,\ \ \ C=\C(A,\sigma_A,f_A)(1+e), \\
\mbox{and }e=\varepsilon_A^{-1}\bigl((1,0)\bigr)\in Z_A\subset \C(A,\sigma_A,f_A).\\
\end{array}\right.\] 
Since  $A$ has degree $8$, the canonical involution $\underline{\sigma_A}$ acts trivially on $Z_A$. Hence the canonical pair $(\underline{\sigma_A},\underline{f_A})$ induces quadratic pairs $(\sigma_B,f_B)$ and $(\sigma_C,f_C)$ on each component, see~\cref{splitcentre}. Moreover, one may check that isomorphic quadruples lead to isomorphic trialitarian triples. 

Conversely, given a trialitarian triple $(A,B,C)$ pick an isomorphism $\alpha_A$ between $\C(A)$ and $B\times C$, and define the $\varepsilon_A$ to be the restriction of $\alpha_A$ to the centre $Z_A$ of the Clifford algebra of $A$. We claim that the isomorphism class of the quadruple $(A,\sigma_A,f_A,\varepsilon_A)$ does not depend on the choice of $\alpha_A$. If all such isomorphisms have the same restriction to the centre, this is clear. Assume now that there exists $\alpha_{A}^{(1)}$ and $\alpha_{A}^{(2)}$ having different restrictions. Then the composition $\alpha_{A}^{(2)}\circ(\alpha_A^{(1)})^{-1}$ is an isomorphism of the algebra with quadratic pair $B\times C$ whose restriction to the centre $F\times F$ is the non trivial automorphism. Hence, $B$ and $C$ are isomorphic, and $A$ is split by \cref{split.lem}. In this case, the algebra with quadratic pair $(A,\sigma_A,f_A)$ admits improper similitudes, and it follows that the quadruples $(A,\sigma_A,f_A,\varepsilon_A^{(1)})$ and $(A,\sigma_A,f_A,\varepsilon_A^{(2)})$ are isomorphic. 

Therefore, the set of isomorphism classes of quadruples $(A,\sigma_A,f_A,\varepsilon_A)$  and the set of isomorphism classes of trialitarian triples $(A,B,C)$ are in bijection, as required. 
\end{proof} 

\subsection{Action of $A_3$ on trialitarian triples} 

The main result of this section is the following, which extends
\cite[(42.3)]{KMRT} to characteristic $2$: 

\begin{thm}\label{thm:trirel}
The action of $A_3$ on $\PGO_8^+$ induces an action on trialitarian triples, which is given by permutations. 
In particular, if
\[\bigl(\C(A,\sigma_A,f_A),\underline{\sigma_A},\underline{f_A}\bigr)\buildrel\sim\over\longrightarrow (B,\sigma_B,f_B)\times (C,\sigma_C,f_C),\] then we also have 
 \[\bigl(\C(B,\sigma_B,f_B),\underline{\sigma_B},\underline{f_B}\bigr)\buildrel\sim\over\longrightarrow  (C,\sigma_C,f_C)\times (A,\sigma_A,f_A),\] and 
 \[\bigl(\C(C,\sigma_C,f_C),\underline{\sigma_C},\underline{f_C}\bigr)\buildrel\sim\over\longrightarrow  (A,\sigma_A,f_A)\times (B,\sigma_B,f_B).\] 
\end{thm}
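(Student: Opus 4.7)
The plan is to transport the $A_3$-action on $\PGO_8^+$ from \cref{sub:action} to the set of isomorphism classes of trialitarian triples via the bijection with $H^1(F,\PGO_8^+)$ from the previous lemma, and to identify the resulting action with cyclic permutation of the components. For this, one first checks that $\theta^+$ and $\theta^-$ are automorphisms of the algebraic group $\gPGO_8^+$ defined over $F$; this follows from the $F$-rationality of the construction $t\mapsto(t^+,t^-)$ in \cref{triple}, whose defining equations involve only the $F$-rational para-Cayley product, together with the routine multiplicativity $(t_1t_2)^\pm=t_1^\pm t_2^\pm$ modulo scalars. Hence $A_3$ acts on $H^1(F,\PGO_8^+)$ and so on trialitarian triples.

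I would then fix the norm form $n$ of a split Cayley algebra $\co$ as a base point, so that $\PGO_8^+=\PGO^+(n)$, and attach to a cocycle $\eta\in Z^1(F,\PGO^+(n))$ the algebra with quadratic pair $(A,\sigma_A,f_A)={}_\eta\Ad_n$. By the functoriality recorded in \cref{aut.prop}, the $\PGO^+(n)$-action on $(\C_0(n),\underline{\sigma_n},\underline{f_n})$ preserves the canonical quadratic pair, and therefore $\bigl(\C(A,\sigma_A,f_A),\underline{\sigma_A},\underline{f_A}\bigr)$ is the twist of $\bigl(\C_0(n),\underline{\sigma_n},\underline{f_n}\bigr)$ by $\eta$. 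By \cref{triple.lem}, $\Psi_1$ identifies this with $\Ad_n\times\Ad_n$, and by \eqref{Int(S)} each $\C_0(t)$ is intertwined to $\Int(t^+)\times\Int(t^-)$. Twisting $\Psi_1$ by $\eta$ therefore produces an $F$-isomorphism
\[\bigl(\C(A,\sigma_A,f_A),\underline{\sigma_A},\underline{f_A}\bigr) \simeq {}_{\eta^+}\Ad_n \times {}_{\eta^-}\Ad_n,\]
so that $(B,\sigma_B,f_B)={}_{\eta^+}\Ad_n$ and $(C,\sigma_C,f_C)={}_{\eta^-}\Ad_n$, where $\eta^\pm$ denotes the cocycle $\gamma\mapsto\theta^\pm(\eta_\gamma)$.

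To finish, I would apply $\theta^+$ to the class $[\eta]$, obtaining $[\eta^+]$; by the same recipe the associated trialitarian triple is
\[\bigl({}_{\eta^+}\Ad_n,\ {}_{(\eta^+)^+}\Ad_n,\ {}_{(\eta^+)^-}\Ad_n\bigr).\]
The cyclic identities $[t^{++}]=[t^-]$ and $[t^{+++}]=[t]$ recorded in the corollary to \cref{triple} give $(\eta^+)^+=\eta^-$ and $(\eta^+)^-=\eta$ at the level of $H^1$, so this triple is exactly $(B,C,A)$, which yields the first displayed isomorphism in the theorem. The isomorphism for $\C(C,\sigma_C,f_C)$ follows either by applying $\theta^+$ once more or directly from $\theta^-([\eta])=[\eta^-]$. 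The main point of care, and the step I expect to be the most delicate, is the first one: verifying that the $A_3$-action on $\PGO_8^+$ is genuinely by $F$-algebraic group automorphisms so that it descends to $H^1$, and checking that the twisted isomorphism between the Clifford algebra and ${}_{\eta^+}\Ad_n\times{}_{\eta^-}\Ad_n$ is defined over $F$ as an isomorphism of algebras with quadratic pair, not merely of algebras with involution; this last refinement rests precisely on the strengthened functoriality \cref{aut.prop} and on \cref{triple.lem}.
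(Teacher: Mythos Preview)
Your proposal is correct and follows essentially the same strategy as the paper's proof: both arguments descend the isomorphism $\Psi_1$ of \cref{triple.lem} via Galois cohomology, using \cref{aut.prop} to ensure the canonical quadratic pair is preserved and equation~\eqref{Int(S)} to identify $\Psi_1\circ\C_0(t)\circ\Psi_1^{-1}$ with $\Int(t^+)\times\Int(t^-)$, then conclude by the cyclic relations $[t^{++}]=[t^-]$, $[t^{+++}]=[t]$. The only cosmetic difference is that the paper works concretely with explicit splittings $\phi_A,\phi_B,\phi_C$ over $F_s$ and computes the cocycles $\phi_B^{-1}\circ{}^\gamma\phi_B$, $\phi_C^{-1}\circ{}^\gamma\phi_C$ directly from a commutative diagram, whereas you phrase the same computation more abstractly as ``twisting $\Psi_1$ by $\eta$''; the content is identical.
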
 
\begin{proof}
Let $(A,B,C)$ be a trialitarian triple, and fix an isomorphism \[\alpha_A:\,\bigl(\C(A,\sigma_A,f_A),\underline{\sigma_A},\underline{f_A}\bigr)\rightarrow (B,\sigma_B,f_B)\times (C,\sigma_C,f_C).\]
As above, we let $n_0$ be the $8$-dimensional hyperbolic form. 
Recall that $\Psi_1$ defined as in \cref{triple.lem} is an isomorphism 
\[\Psi_1:\,(\C_0(n_0),\underline{ \sigma_{n_0}},\underline{f_{n_0}})\rightarrow \Ad_{n_0}\times\Ad_{n_0},\]
so that $(\Ad_{n_0},\Ad_{n_0},\Ad_{n_0})$ also is a trialitarian triple. 
After scalar extension to a separable closure $F_s$ of the base field $F$, both triples are isomorphic. 
More precisely, consider an arbitrary isomorphism 
\[\phi_A:\,(\Ad_{n_0})_{F_s}\rightarrow (A,\sigma_A,f_A)_{F_s}.\] Composing $\phi_A$ with an improper similitude of $n_0$ if necessary, we may assume that the composition 
$\alpha_A\circ\C_0(\phi_A)\circ\Psi_1^{-1}$ acts trivially on $F\times F$, so that it is given by $(\phi_B,\phi_C)$ for some isomorphisms of algebras with quadratic pairs 
\[\phi_B:\,(\Ad_{n_0})_{F_s}\rightarrow (B,\sigma_B,f_B)_{F_s}\mbox{ and }\phi_C:\,(\Ad_{n_0})_{F_s}\rightarrow (C,\sigma_C,f_C)_{F_s}.\]
Hence we get an isomorphism of trialitarian triples, that is a commutative diagram
\begin{equation*}
\xymatrix{%
(\C_0(n_0),\underline{\sigma_{n_0}},\underline{f_{n_0}})_{F_s}\ar[r]^{\Psi_1}\ar[d]_{\C(\phi_A)}&(\Ad_{n_0})_{F_s}\times(\Ad_{n_0})_{F_s}\ar[d]^{\phi_B\times\phi_C}\\
{\bigl(\C(A,\sigma_{A},f_{A}),\underline{\sigma_{A}},\underline {f_{A}}\bigr)}\ar[r] ^{\alpha_{A}}&(B,\sigma_{B},f_{B})\times ({C},\sigma_{C},f_{C}).}
\end{equation*}

Identifying the automorphism group of $(\Ad_{n_0})_{F_s}$ with $\PGO_8^+(F_s)$, we get by Galois descent that the map \[a: \Gamma_F\rightarrow \PGO_8^+(F_s),\  \gamma \mapsto \phi_A^{-1}\circ{}^\gamma\phi_A\] is a $1$-cocycle whose cohomology class determines the triple $(A,B,C)$. 
Finally, from the commutative diagram above, we have 
\[\Psi_1\circ\C_0(\phi_A^{-1}\circ{}^\gamma\phi_A)\circ \Psi_1^{-1}=
\bigl(\Psi_1\C(\phi_A^{-1})\alpha_A^{-1}\bigr)\circ\bigl(\alpha_A\C(^\gamma\phi_A)\Psi_1^{-1}\bigl)=(\phi_B^{-1}\circ{}^\gamma\phi_B,\phi_C^{-1}\circ{}^\gamma\phi_C).\]

In view of the description of the trialitarian action in \cref{sub:action}, see also \cref{triality.rem}, we get that $\theta^+(a)$ and $\theta^-(a)$ coincide with the cohomology classes of the cocycles $ \gamma \mapsto \phi_B^{-1}\circ{}^\gamma\phi_B$ and $ \gamma \mapsto \phi_C^{-1}\circ{}^\gamma\phi_C$, respectively. Hence, $\theta^+(A,B,C)$ and $\theta^-(A,B,C)$ are trialitarian triples having respectively $B$ and $C$ as a first slot. 
Finally, we have $(\theta^+)^2=\theta^-$ and $\theta^-\theta^+=\id$. Applying these formulas to the triple $(A,B,C)$ we get that the second and the third slots in $\theta^+(A,B,C)$ respectively are the first slots in $\theta^-(A,B,C)$ and in $(A,B,C)$, that is $\theta^+(A,B,C)=(B,C,A)$. The same kind of argument shows $\theta^-(A,B,C)=(C,A,B)$, and this finishes the proof. 
\end{proof}

\section{Applications of Triality}\label{sec:app}

\cref{thm:trirel} above shows that the Clifford algebra, viewed as an algebra with quadratic pair, actually is a complete invariant for degree $8$ algebras with quadratic pair with trivial discriminant. 
As a first application of our main result, we now characterize totally decomposable algebras with quadratic pair in degree $8$, see \cref{thm:totdecomp}. 
The proof uses \cref{split.lem}, which describes all triples including a split algebra. 
Using direct sums of algebras with quadratic pairs, we then provide exemples of triples, in which all three slots decompose as a sum of two degree $4$ totally decomposable algebras with quadratic pair. Finally, we prove that all trialitarian triples that include two algebras of index at most $2$ are of this shape. 

\subsection{Totally decomposable quadratic pairs}
 
Using the trialitarian action described in the previous section, we may characterize totally decomposable degree $8$ algebras with quadratic pairs as follows: 
\begin{thm}\label{thm:totdecomp}
Let $(A,\sigma,f)$ be an $F$-algebra with quadratic pair with $\deg(A)=8$. Then $(A,\sigma,f)$ is totally decomposable if and only if it has trivial discriminant and its Clifford algebra has a split factor. 
\end{thm}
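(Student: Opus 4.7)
Both directions are proved using the trialitarian triple $(A,B,C)$ associated to $(A,\sigma,f)$, whose existence is guaranteed by the triviality of the discriminant.

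For the implication $(\Rightarrow)$, suppose $(A,\sigma,f)$ is totally decomposable. The discriminant of $(\sigma,f)$ is then trivial by the structure theory of totally decomposable quadratic pairs in~\cite{dolphin:totdecomp}. Over a splitting field $L$ of $A$, $(A,\sigma,f)_L$ becomes the adjoint of a $3$-fold Pfister form $\pi_L$, and \cref{pfister.cor} gives $\C(A,\sigma,f)_L\simeq \Ad_{\pi_L}\times\Ad_{\pi_L}$. Hence both components $(B,\sigma_B,f_B)$ and $(C,\sigma_C,f_C)$ of the Clifford algebra become split after base change to $L$, so the $L$-triple $(A,B,C)_L$ is isomorphic to $(\Ad_{\pi_L},\Ad_{\pi_L},\Ad_{\pi_L})$. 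To exhibit a split factor over $F$, I would use the $S_3$-symmetry of the trialitarian action combined with the fact that the Pfister invariant $\pi$ descends to $F$ (again via~\cite{dolphin:totdecomp}): the three slots $A,B,C$ share the same underlying Pfister form, and a cohomological argument on the corresponding class in $H^1(F,\gPGO_8^+)$ forces at least one of $B$ or $C$ to be $F$-isomorphic to $\Ad_\pi$, hence split.

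For the implication $(\Leftarrow)$, suppose $(\sigma,f)$ has trivial discriminant and $\C(A,\sigma,f)\simeq (B,\sigma_B,f_B)\times (C,\sigma_C,f_C)$ with, say, $C$ split. Applying \cref{split.lem} to the cyclically permuted triple $(C,A,B)$ gives $(A,\sigma,f)\simeq(B,\sigma_B,f_B)$, and \cref{thm:trirel} yields $\C(C,\sigma_C,f_C)\simeq (A,\sigma,f)\times (A,\sigma,f)$. Since $C$ is split, $(C,\sigma_C,f_C)\simeq\Ad_q$ for a nonsingular $8$-dimensional quadratic form $q$ of trivial discriminant, and $\C_0(q)\simeq (A,\sigma,f)\times(A,\sigma,f)$; in particular $\ind(A)=\ind(\C_0(q))$. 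When $\ind(A)\leq 2$, \cref{prop:8dim} gives $q\simeq B_4\otimes\phi$ for a $4$-dimensional symmetric bilinear form $B_4$ and a binary nonsingular quadratic form $\phi$, so $\Ad_q\simeq\Ad_{B_4}\otimes\Ad_\phi$ is totally decomposable as an algebra with quadratic pair. The final step is to transfer total decomposability from $C$ back to $A$ through the triality action.

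\emph{Main obstacle.} The hardest point is precisely this triality transfer, both for the rational split-factor output in $(\Rightarrow)$ and for the final conclusion in $(\Leftarrow)$: total decomposability is asymmetric by construction, involving a distinguished factor that carries the semi-trace, whereas the trialitarian action permutes the three slots of the triple symmetrically. I plan to resolve this by encoding total decomposability via a triality-invariant datum, namely the Pfister form of~\cite{dolphin:totdecomp}, and verifying that this invariant coincides across the three members of a totally decomposable triple. A secondary subtlety is the hypothesis $\ind(\C_0(q))\leq 2$ in \cref{prop:8dim}: when $\ind(A)>2$, a scalar-extension argument is required to reduce to the case $\ind(A)\leq 2$, followed by a descent step to conclude.
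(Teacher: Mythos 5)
Your high-level plan (use the trialitarian triple together with \cref{split.lem} and \cref{thm:trirel}) is on the right track, but both directions have genuine gaps that the paper closes in a different, more direct way.

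For the forward implication, your appeal to ``a cohomological argument on the corresponding class in $H^1(F,\gPGO_8^+)$'' is not an argument: after extending scalars to a splitting field $L$ of $A$ you learn that $B_L$ and $C_L$ are split, but that only tells you $B$ and $C$ are Brauer-equivalent to a power of $[A]$ (which is already known from the fundamental relations); it does not force either of $B$ or $C$ to be split over $F$. The paper instead argues \emph{constructively}: from a given total decomposition $(A,\sigma,f)\simeq \bigl(\bigotimes_i[a_i,b_i),\can\otimes\can\otimes\can,f_\otimes\bigr)$ one writes down an explicit $8$-dimensional quadratic form $q$ with trivial Arf invariant, and \cref{splitdec.prop} yields $\C(\Ad_q)\simeq (A,\sigma,f)\times(A,\sigma,f)$. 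Then \cref{thm:trirel} cycles this to $\C(A,\sigma,f)\simeq (A,\sigma,f)\times\Ad_q$, exhibiting the split factor $\Ad_q$ over $F$ directly. Your proposed reliance on a ``triality-invariant Pfister form'' shared by $A$, $B$, $C$ is circular when $A$ is not split, and does not produce a rational split factor.

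For the converse, your detour through \cref{prop:8dim} is both unnecessary and incomplete. \cref{prop:8dim} requires $\ind(\C_0(q))\leqslant 2$, and you correctly note that $\ind(A)$ may be $4$ (biquaternion division case), in which case your argument breaks and the ``scalar-extension plus descent'' step you invoke is precisely the kind of descent-of-total-decomposability statement that is not available. The paper's route avoids this entirely: once you reach the triple $(\Ad_q, A, A)$ via \cref{split.lem} and \cref{thm:trirel}, you need not show that $\Ad_q$ is totally decomposable --- you need that its \emph{Clifford algebra} $\C(\Ad_q)$ with the canonical quadratic pair is totally decomposable, and that is exactly the content of \cref{splitdec.prop} (valid for any nonsingular $q$ with trivial discriminant, no index hypothesis). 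Since $\C(\Ad_q)\simeq (A,\sigma,f)\times(A,\sigma,f)$, total decomposability of $(A,\sigma,f)$ follows immediately. The ``triality transfer'' you flagged as the main obstacle is resolved not by constructing an invariant datum but by recognizing that \cref{splitdec.prop} already packages the answer.
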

\begin{proof}
The case of $F$ of characteristic different from $2$ follows immediately from  \cite[(42.11)]{KMRT} and the uniqueness of the semi-trace in this case. 
 Assume now that $F$ is of characteristic $2$.  First consider the case where $(A,\sigma,f)$ has trivial discriminant, and its Clifford algebra has a split factor. 
This means $(A,\sigma,f)$ is part of a trialitarian triple $(A,B,C)$ with $B$ or $C$ split. 
By \cref{thm:trirel}, it is also part of a triple whose first slot is split, and in view of~\cref{split.lem}, we get a quadratic form $q$ such that 
\[\bigr(\C(\Ad_q),\underline{\sigma_q},\underline{f_q}\bigl)\simeq (A,\sigma,f)\times(A,\sigma,f).\]
In view of ~\cref{splitdec.prop}, this shows $(A,\sigma,f)$ is totally decomposable. 

Assume conversely that $(A,\sigma,f)$ is totally decomposable, and pick a decomposition 
\[(A,\sigma,f) = ([a_1,b_1)\otimes [a_2,b_2)\otimes [a_3,b_3), \can\otimes\can \otimes \can, f_\otimes)\,. \]
Let $q= [b_1,a_1b_1^{-1}]\perp [b_2,a_2b_2^{-1}]\perp [b_3,a_3b_3^{-1}] \perp [1,a_1+a_2+a_3]$. 
Then $q$ has trivial Arf invariant, and applying again \cref{splitdec.prop}, we get
\[(\C(\Ad_q),\underline{\sigma_q},\underline{f_q}) \simeq (A,\sigma, f)\times  (A,\sigma, f)\,.\]
Hence by~\cref{thm:trirel}, we have
\[\bigl(\C(A,\sigma,f),\underline{\sigma},\underline f\bigr)\simeq (A,\sigma,f)\times \Ad_q.\]
This proves $(A,\sigma,f)$ has trivial Arf invariant and its Clifford algebra has a split component. 
\end{proof} 

\subsection{Examples of trialitarian triples}\label{triples.sec}
In this section, we provide explicit examples of trialitarian triples, and we prove all triples that include at least two algebras of Schur index at most $2$ are of this shape, as well as all isotropic triples. 
We use the following definition, which was first introduced for algebras with involution by Dejaiffe~\cite{dejaiffe:orthsums}, and later extended to quadratic pairs in~\cite[p.379]{tignol:qfskewfield} (see also \cite[Def. 1.4]{Tignol:galcohomgps}, and~\cite[Prop. 7.4.2]{GilleLNM}). 
\begin{defi} 
The algebra with quadratic pair $(A,\sigma,f)$ is called an orthogonal sum of $(A_1,\sigma_1,f_1)$ and $(A_2,\sigma_2,f_2)$, and we write  
\[(A,\sigma,f)\in\,(A_1,\sigma_1,f_1)\boxplus(A_2,\sigma_2,f_2),\] 
if there are symmetric orthogonal idempotents $e_1$ and $e_2$ in the algebra $A$ such that for $i\in\{1,2\}$, 
\[(e_iAe_i,\sigma_{|e_iAe_i})\simeq (A_i,\sigma_i),\] so that we may identify $A_i$ with a subset of $A$, and 
\[\forall s_i\in \Sym(A_i,\sigma_i),\ f(s_i)=f_i(s_i).\] 
\end{defi} 
Note that the identification of $A_i$ with its image in $A$ is compatible with the reduced trace. More precisely, we have  \[\Trd_A(a_i)=\Trd_{A_i}(a_i)\mbox{ for all }a_i\in A_i\simeq e_iAe_i\subset A,\] see the matrix description of the orthogonal sum given in~\cite[\S2]{dejaiffe:orthsums}. 
Moreover, the direct product $(A_1,\sigma_1,f_1)\times (A_2,\sigma_2,f_2)$ embeds in $(A,\sigma,f)$, meaning there is an embedding of the direct product of algebras with involution, and the restriction of $f$ to the image of $\Sym(A_i,\sigma_i)$ coincides with $f_i$ for $i\in\{1,2\}$. 
\begin{ex}
Let $(V_1,q_1)$ and $(V_2,q_2)$ be two non singular quadratic spaces over $F$. For all $\mu\in F^\times$, we have 
\[\Ad_{q_1\perp\qform{\mu}q_2}\in \Ad_{q_1}\boxplus\Ad_{q_2}.\]
This follows directly from the description of $\Ad_q$ given in~\cite[\S 5.B]{KMRT}, and the definition above, taking $e_1$ and $e_2$ in $A=\End_F(V_1\oplus V_2)$ to be the orthogonal projections on $V_1$ and $V_2$ respectively. 
\end{ex}
As this example shows, an orthogonal sum $(A,\sigma,f)$ is not uniquely determined by its summands $(A_1,\sigma_1,f_1)$ and $(A_2,\sigma_2,f_2)$ and $(A_1,\sigma_1,f_1)\boxplus(A_2,\sigma_2,f_2)$ should be considered as a set. 

 With this in hand, we may produce examples of trialitarian triples as follows. 
Let $Q_1,Q_2,Q_3$ and $Q_4$ be quaternion algebras such that $\bigotimes_{i=1}^4Q_i$ is split.
For all $i$ and $j$ with $i\not =j$, we denote by $f_{ij}$ the semi-trace $f_\otimes$ on $\Sym(Q_i\otimes Q_j,\can\otimes\can)$ associated to the tensor product decomposition $(Q_i,\can)\otimes (Q_j,\can)$ as in \cref{canprod}. 
 We have the following 
 \begin{prop}\label{3orthsums}
 Let $(A,\sigma,f)$ be an $F$-algebra with quadratic pair such that
$$(A,\sigma,f)\in \left(Q_1\otimes Q_2,\can\otimes\can, f_{12} \right)\boxplus\left(Q_3\otimes Q_4,\can\otimes\can, f_{34} \right)\,. $$
Then $(\sigma,f)$ has trivial discriminant, and the Clifford algebra $\C(A,\sigma,f)$, with its canonical quadratic pair, is a direct product of 
\[\C^+(A,\sigma,f)\in\left(Q_1\otimes Q_3,\can\otimes\can, f_{13} \right)\boxplus \left(Q_2\otimes Q_4,\can\otimes\can, f_{24} \right),\] 
\[\mbox{ and }\ \C^-(A,\sigma,f)\in\left(Q_1\otimes Q_4,\can\otimes\can, f_{14} \right)\boxplus\left(Q_2\otimes Q_3,\can\otimes\can, f_{23} \right).\] 
\end{prop}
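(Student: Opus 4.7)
The plan is to show that three algebras with quadratic pair
\begin{align*}
X_{12,34}&\in(Q_1\otimes Q_2,\can\otimes\can,f_{12})\boxplus(Q_3\otimes Q_4,\can\otimes\can,f_{34}),\\
X_{13,24}&\in(Q_1\otimes Q_3,\can\otimes\can,f_{13})\boxplus(Q_2\otimes Q_4,\can\otimes\can,f_{24}),\\
X_{14,23}&\in(Q_1\otimes Q_4,\can\otimes\can,f_{14})\boxplus(Q_2\otimes Q_3,\can\otimes\can,f_{23}),
\end{align*}
can be chosen so as to form a trialitarian triple $(X_{12,34},X_{13,24},X_{14,23})$ over $F$, with $X_{12,34}=(A,\sigma,f)$. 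The asserted Clifford decomposition is then an immediate consequence of \cref{thm:trirel}. The splitness of $\bigotimes_{i=1}^4 Q_i$ is the compatibility that allows the three orthogonal sums to sit in a common triple, since it forces the Brauer classes $[Q_i\otimes Q_j]$ and $[Q_k\otimes Q_l]$ to match across each pairing $\{ij,kl\}$.

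\textbf{Trivial discriminant.} Each summand $(Q_i\otimes Q_j,\can\otimes\can,f_\otimes)$ has trivial discriminant: over a splitting field of $Q_i\otimes Q_j$ it becomes $\Ad_\pi$ for a $2$-fold quadratic Pfister form $\pi$, by an argument analogous to \cref{differentdecomp}, and Pfister forms have trivial Arf invariant; since the discriminant is preserved under base change, it is already trivial over $F$. Combined with additivity of the discriminant for orthogonal sums (in $F/\wp(F)$ in characteristic $2$, multiplicative in $F^\times/F^{\times 2}$ otherwise) this gives trivial discriminant for $(A,\sigma,f)$.

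\textbf{Trialitarian triple in the split case and descent.} When all the $Q_i\otimes Q_j$ are split, each $X_{ij,kl}$ is isomorphic to $\Ad_{q_{ij,kl}}$ for an explicit $8$-dimensional nonsingular quadratic form with trivial Arf invariant. Using compatible presentations $Q_i\simeq[a_ib_i,a_ia_4)$ in characteristic $2$, as in \cref{splitdec.prop} (and analogues in characteristic different from $2$), I would take $q_{12,34}=[a_1,b_1]\perp[a_2,b_2]\perp[a_3,b_3]\perp[a_4,b_4]$; then \cref{splitdec.prop} computes $\C(\Ad_{q_{12,34}})$ and identifies its two components with $X_{13,24}$ and $X_{14,23}$ in their orthogonal sum presentations, exhibiting the trialitarian triple in the split case. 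For general $F$, the descent is carried out using the bijection between isomorphism classes of trialitarian triples and $H^1(F,\PGO_8^+)$ described right after \cref{split.lem}, combined with the rationality of the orthogonal sum, the canonical semi-trace (\cref{semitrace.def}), and the Clifford algebra construction.

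\textbf{Main obstacle.} The principal difficulty is the matching of orthogonal sum representatives: the orthogonal summand structure is data beyond the isomorphism class of the underlying algebra with quadratic pair, so one must verify that the idempotents $e_1,e_2\in A$ giving the decomposition $(A,\sigma,f)\in(Q_1\otimes Q_2)\boxplus(Q_3\otimes Q_4)$ are compatible with the Galois action needed to descend the split-case trialitarian triple, and that the corresponding induced idempotents in each Clifford component cut out the predicted orthogonal summands of the pairings $\{13,24\}$ and $\{14,23\}$. This bookkeeping is delegated to \cref{part} for the tensor-product semi-traces and to a careful analysis of the canonical map $c\colon A\to\C(A,\sigma,f)$ from \cref{sec:canqp}; once carried out, cyclically applying \cref{thm:trirel} gives the Clifford decomposition of $(A,\sigma,f)$ as stated.
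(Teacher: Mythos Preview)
Your route through \cref{thm:trirel} is circular in a way you have not resolved. To show that $(X_{12,34},X_{13,24},X_{14,23})$ is a trialitarian triple you must, by definition, exhibit an isomorphism $\bigl(\C(A,\sigma,f),\underline{\sigma},\underline{f}\bigr)\simeq X_{13,24}\times X_{14,23}$ of algebras with quadratic pair; but that isomorphism \emph{is} the proposition. Invoking \cref{thm:trirel} afterwards adds nothing. Your attempted workaround (split case plus descent) does not close the gap either: \cref{splitdec.prop} gives the Clifford algebra of $\Ad_q$ as a \emph{tensor product} of quaternion algebras over the \'etale centre, not as an orthogonal sum of biquaternion pieces, so it does not yield the claimed identification of $X_{13,24}$ and $X_{14,23}$; and the bijection with $H^1(F,\PGO_8^+)$ classifies isomorphism classes of triples, not orthogonal-sum structures, so descent along $\PGO_8^+$ cannot transport the idempotents you need. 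You flag this in your ``Main obstacle'' paragraph but then delegate it to unspecified bookkeeping---that is exactly the content of the proposition.

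The paper's proof is direct and avoids triality entirely. The involution-level statement (that $\C(A,\sigma,f)$ with its canonical involution decomposes as the indicated orthogonal sums) is already known from \cite[Prop.~6.6]{jinv} together with Dejaiffe's embedding \cite[Prop.~3.5]{dejaiffe:orthsums}, which produces an explicit inclusion $\bigl((Q_1,\can)\times(Q_2,\can)\bigr)\otimes\bigl((Q_3,\can)\times(Q_4,\can)\bigr)\hookrightarrow(\C(A,\sigma,f),\underline{\sigma})$. The only new point is the semi-trace, and for this one chooses $u_i\in Q_i$ with $u_i+\overline{u_i}=1$, observes that $u_1\otimes u_2\in Q_1\otimes Q_2\subset A$ has reduced trace $1$, and computes its image under the canonical map $c$ using \cite[(8.19)]{KMRT}: one finds $c(u_1\otimes u_2)=(u_1\otimes 1,\,u_2\otimes 1,\,u_1\otimes 1,\,u_2\otimes 1)$ in the four biquaternion blocks, whence $\underline{f}$ restricts to $f_{ik}$ on each $\Sym(Q_i\otimes Q_k,\can\otimes\can)$ by \cref{canprod}. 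This is a short rational computation, with no descent and no appeal to \cref{thm:trirel}.
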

\begin{proof}
In characteristic different from $2$, the algebra with involution version of this result is stated and proved in~\cite[Prop.6.6]{jinv}, and the proposition follows immediately by uniqueness of the semi-trace in this case. So we may assume $F$ has characteristic $2$. The same argument as in characteristic different from $2$ applies to describe $\C(A,\sigma,f)$ with its canonical involution, and the definition above shows it only remains to check the canonical semi-trace $\underline{f}$ acts as $f_{ij}$ on each subset \[\Sym(Q_i\otimes Q_j,\can\otimes \can)\subset\Sym\bigl(\C(A,\sigma,f),\underline{\sigma}\bigr).\]

For $i\in\{1,2\}$, let $u_i$ in $Q_i$ be a quaternion such that $\bar u_i+u_i=1$. Identifying $Q_1\otimes Q_2$ to a subset of $A$ as above, we have
\[\Trd_A(u_1\otimes u_2)=\Trd_{Q_1\otimes Q_2}(u_1\otimes u_2)=\Trd_{Q_1}(u_1)\Trd_{Q_2}(u_2)=1.\] Therefore, the canonical semi-trace $\underline f$ on $\C(A,\sigma,f)$ is determined by the element $c(u_1\otimes u_2)$ in $\C(A,\sigma,f)$ by \cref{semitrace.def}. 

Recall from~\cite[(15.12)]{KMRT} that the Clifford algebra of $(Q_i\otimes Q_j,\can\otimes \can, f_{ij})$, with its canonical involution, is the direct product $(Q_i,\can)\times(Q_j,\can)$. 
The same argument as in the proof of~\cite[Prop.3.5]{dejaiffe:orthsums} shows that the embedding of the direct product 
$\left(Q_1\otimes Q_2,\can\otimes\can, f_{12} \right)\times\left(Q_3\otimes Q_4,\can\otimes\can, f_{34} \right)$ in $(A,\sigma,f)$ induces an embedding of 
the tensor product of the corresponding Clifford algebras 
$$  \left( (Q_1,\can)\times (Q_2,\can)\right)\otimes \left((Q_3,\can)\times (Q_4,\can)\right)\hookrightarrow \left( \C(A,\sigma,f), \underline{\sigma}\right)\,.  $$
It follows that $c(u_1\otimes u_2)$ is $c_{12}(u_1\otimes u_2)\otimes (1,1)$, where $c_{12}$ is the canonical map from $(Q_1\otimes Q_2,\can\otimes \can, f_{12})$ to its Clifford algebra $(Q_1,\can)\times (Q_2,\can)$. This map is described in~\cite[(8.19)]{KMRT}, and we get 
\[c(u_1\otimes u_2)=(u_1,u_2)\otimes(1,1)=(u_1\otimes 1,u_2\otimes 1,u_1\otimes 1,u_2\otimes 1),\]
in $(Q_1\otimes Q_3)\times (Q_2\otimes Q_4)\times (Q_1\otimes Q_4)\times (Q_2\otimes Q_3).$
Therefore, the canonical semi-trace acts on $\Sym(Q_i\otimes Q_k,\can\otimes \can)$ for $i\in\{1,2\}$ and $k\in\{3,4\}$ by 
\[\underline f(x)=\Trd_{Q_i\otimes Q_k}((u_i\otimes 1)x).\] In particular, it vanishes on $\Sym(Q_i,\can)\otimes \Sym(Q_k,\can)$ and coincides with $f_{ik}$ by~\cite[(5.20)]{KMRT}, see also \cref{prod.section}. This finishes the proof.
\end{proof}

\begin{remark}
\label{isotropic-triples}
(1) If one of the quaternion algebras, say $Q_4$, is split, then the algebras with quadratic pair $(Q_i\otimes Q_4, \can\otimes\can, f_{i4})$ are hyperbolic for $i\in\{1,2,3\}$ and we get an isotropic trialitarian triple in which the algebras with quadratic pairs are respectively Witt equivalent to 
\[(Q_1\otimes Q_2,\can\otimes\can, f_{12}),\ \ (Q_1\otimes Q_3,\can\otimes\can, f_{13})\mbox{ and \ }(Q_2\otimes Q_3,\can\otimes \can, f_{23}),\]
where $Q_1$ $Q_2$ and $Q_3$ are quaternion algebras with $Q_1\otimes Q_2\otimes Q_3$ split. 

(2) The argument in the proof of~\cite[Prop.6.12]{jinv} extends to this setting and it follows that all trialitarian triples which are isotropic are either as in (1) or with a split component. In the second case, they coincide up to permutation with 
\[\bigl(\Ad_{q\perp\HH},M_2(D),M_2(D)\bigr),\] where $q$ is an Albert form, $D$ is the corresponding biquaternion algebra, which may have index $1,2$ or $4$ depending on $q$, and $M_2(D)$ is endowed with its hyperbolic quadratic pair, see \cref{isohypo}.  
\end{remark} 

The purpose of the remaining part of this section is to prove the following theorem, which provides a description of all trialitarian triples including at least two algebras of Schur index at most $2$. 

\begin{thm}
\label{lowindextriples}
Let $(A,B,C)$ be a trialitarian triple over $F$ such that at least two of the algebras $A$, $B$ and $C$ have Schur index at most $2$. Then there exist $F$-quaternion algebras $Q_1,Q_2,Q_3$ and $Q_4$, with $Q_1\otimes Q_2\otimes Q_3\otimes Q_4$ split, such that 
$$(A,\sigma_A,f_A)\in \left(Q_1\otimes Q_2,\can\otimes\can, f_{12} \right)\boxplus
\left(Q_3\otimes Q_4,\can\otimes\can, f_{34} \right)\,, $$
 $$(B,\sigma_B,f_B)\in  
\left(Q_1\otimes Q_3,\can\otimes\can, f_{13} \right)
\boxplus \left(Q_2\otimes Q_4,\can\otimes\can, f_{24} \right),
 $$
$$\mbox{ and }\ (C,\sigma_C,f_C)\in   
\left(Q_1\otimes Q_4,\can\otimes\can, f_{14} \right)
\boxplus \left(Q_2\otimes Q_3,\can\otimes\can, f_{23} \right)
\,. $$
\end{thm}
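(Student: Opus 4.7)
Since the conclusion is invariant under cyclic permutation of the three slots, \cref{thm:trirel} allows us to arrange that $\ind(A)\leqslant 2$ and $\ind(B)\leqslant 2$. By \cref{3orthsums}, once we exhibit an orthogonal sum decomposition
\[(A,\sigma_A,f_A)\in(Q_1\otimes Q_2,\can\otimes\can,f_{12})\boxplus(Q_3\otimes Q_4,\can\otimes\can,f_{34})\]
of $A$ itself, the matching decompositions of $B$ and $C$ asserted in the theorem follow automatically from that proposition. The splitness of $Q_1\otimes Q_2\otimes Q_3\otimes Q_4$ is also automatic: in an orthogonal sum $A\in A_1\boxplus A_2$ each corner $A_i=e_iAe_i$ is Morita equivalent to $A$, so $[A]=[Q_1\otimes Q_2]=[Q_3\otimes Q_4]$ in $\br(F)$, whence $[Q_1\otimes Q_2\otimes Q_3\otimes Q_4]=[Q_3]^2[Q_4]^2=1$ because quaternion classes are $2$-torsion. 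The theorem therefore reduces to producing this decomposition of $A$.

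First assume $A$ is split, so that $(A,\sigma_A,f_A)\simeq\Ad_q$ for a nonsingular $8$-dimensional form $q$ of trivial discriminant. By \cref{split.lem}, $B\simeq C$, and the hypothesis $\ind(B)\leqslant 2$ implies that both components of $\C(A,\sigma_A,f_A)\simeq\C_0(q)$ have index at most $2$. Applying \cref{prop:8dim} then yields $q\simeq B'\otimes[1,d]$ for some $4$-dimensional symmetric bilinear form $B'$ and some $d\in F$. Splitting $B'$ orthogonally as $\qf{\alpha_1,\alpha_2}^{bi}\perp\qf{\alpha_3,\alpha_4}^{bi}$ produces $q\simeq q_1\perp q_2$ with each $q_i$ similar to the $2$-fold Pfister form $\pfr{\alpha_{2i-1}\alpha_{2i},d}$, which is the norm form of a quaternion algebra $H_i$. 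By \cite[(2.9)]{dolphin:conic} (as already used in the proof of \cref{pfister.cor}), $\Ad_{q_i}\simeq(H_i\otimes H_i,\can\otimes\can,f_\otimes)$, and setting $Q_1=Q_2=H_1$ and $Q_3=Q_4=H_2$ gives the required decomposition of $A$.

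Suppose now that both $A$ and $B$ have index exactly $2$, and write $A\simeq M_4(H)$ for a quaternion $F$-algebra $H$. The quadratic pair $(\sigma_A,f_A)$ corresponds to the adjoint datum of a rank-$4$ hermitian form $h$ over $(H,\can)$ together with a compatible semi-trace. The strategy is to decompose $h$ orthogonally as $h\simeq h_1\perp h_2$ into rank-$2$ hermitian summands in such a way that each summand, equipped with the restriction of $f_A$, becomes a totally decomposable algebra with quadratic pair of the form $(H\otimes Q_i,\can\otimes\can,f_\otimes)$ for some quaternion $F$-algebra $Q_i$. Setting $Q_1=Q_3=H$ and taking $Q_2,Q_4$ to be the resulting quaternions then yields the desired decomposition of $A$.

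The main obstacle is controlling the semi-trace in the index-$2$ step: the hermitian decomposition $h=h_1\perp h_2$ must be selected so that the restriction of $f_A$ to $\Sym(M_2(H),\sigma_{h_i})$ coincides with the canonical semi-trace $f_\otimes$ on $(H,\can)\otimes(Q_i,\can)$, not merely so that the underlying involution factors through the tensor product. The hypothesis $\ind(B)\leqslant 2$ is the crucial extra input here, playing in the index-$2$ case the role the bound on $\ind(\C_0(q))$ plays in \cref{prop:8dim}. A natural route is a generic splitting argument: pass to the function field of the Severi--Brauer variety of $A$, where $A$ splits and $\ind(B)$ does not increase, apply the split-case analysis over that extension, and descend the resulting quaternions back to $F$ using the rigidity of rank-$2$ hermitian decompositions over $(H,\can)$ together with the Brauer-class constraints imposed by $B$ and $C$.
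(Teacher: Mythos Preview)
Your reduction via \cref{thm:trirel} and \cref{3orthsums}, and your treatment of the split case via \cref{prop:8dim}, are correct and agree with the paper's argument.

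The index-$2$ case, however, is not proved. You propose to pass to the function field of the Severi--Brauer variety of $A$, apply the split case there, and ``descend the resulting quaternions back to $F$ using the rigidity of rank-$2$ hermitian decompositions''. No such rigidity result is stated or cited, and no descent mechanism is supplied: the kernel of $\br(F)\to\br\bigl(F(\mathrm{SB}(A))\bigr)$ is generated by $[H]$, so quaternion classes over the generic fibre do not descend uniquely, and there is no reason a rank-$2$ orthogonal splitting produced upstairs should come from one over $F$. As written this is a programme, not an argument. Note also that in characteristic~$2$ the quadratic pair on $M_4(H)$ is encoded by a \emph{generalised quadratic form} over $(H,\can)$ in the sense of Tits, not merely by its polar hermitian form; this distinction is exactly what controls the semi-trace you flag as the obstacle.

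The paper's argument is direct and avoids generic splitting. One writes $(A,\sigma_A,f_A)\simeq\Ad_q$ for a diagonalisable generalised quadratic form $q\simeq\qf{u,u_2,u_3,u_4}$ over $(H,\can)$, with the first entry normalised so that $u^2+u=a\in F$. Over the separable quadratic extension $K=F[\wp^{-1}(a)]$ the slot $\Ad_{\qf u}$ becomes hyperbolic, hence $(\Ad_{\qf{u_2,u_3,u_4}})_K$ is adjoint to an Albert form; the hypothesis that one component of $\C(A,\sigma_A,f_A)$ has index $\leqslant2$ forces this Albert form to be isotropic. The key input is \cref{lem:genqfs}: a generalised quadratic form over $(H,\can)$ that becomes isotropic over $K$ already represents $u\lambda$ for some $\lambda\in F^\times$. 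One may therefore take $u_2=u\lambda$, so that $\Ad_{\qf{u,u\lambda}}\simeq\Ad_{\pff\lambda}\otimes\Ad_{\qf u}$ is totally decomposable; additivity of the discriminant then forces $\Ad_{\qf{u_3,u_4}}$ to have trivial discriminant and hence to be totally decomposable as well by \cite[(15.12)]{KMRT}. This yields the required orthogonal sum for $A$ over $F$ itself, with no descent step.
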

In order to prove the theorem, we use the so-called generalised quadratic forms, as defined by Tits in~\cite{Tits:genquadforms}. We first recall the definition and a few well-known facts. 
Let $A$ be a central simple $F$-algebra with involution $\theta$ of the first kind. 
A generalised quadratic form over $(A,\theta)$ is a pair $(V,q)$ where $V$ is a finite-dimensional right projective $A$-module  and $q$ is a map $q:V\rightarrow A/\Symd(A,\theta)$ subject to the following conditions:
\begin{enumerate}[$(a)$]
\item  $q(xd)=\theta(d)q(x)d$ for all $x\in V$ and $d\in A$.
\item There exists an hermitian form $h$ defined on $V$ and with values in $(A,\theta)$ such that for all $x,y\in V$ we have $q(x+y)- q(x)-q(y) = h(x,y) +\Symd(A,\theta)$. 
\end{enumerate}
In this case the hermitian form $(V,h)$ is uniquely determined (see \cite[(5.2)]{Knus:1991}) and we call it the {polar form of $(V,q)$}. Note that it follows from $(b)$ that \[h(x,x)\in\Symd(A,\theta),\mbox{ for all  }x\in V,\] hence the polar form of any quadratic form over $(A,\theta)$ is  alternating. 
We call $(V,q)$ {nonsingular} if  its polar form is nondegenerate. 
We say that $(V,q)$   {represents} an element $a\in A$ if $q(x)=a+\Symd(A,\theta)$ for some $x\in V\setminus \{0\}$.
  We call $(V,q)$ {isotropic} if it represents $0$  and {anisotropic} otherwise.
For a field extension $K/F$ we write $(A,\theta)_K=(A\otimes_FK,\theta\otimes\id)$, $V_K=V\otimes_FK$,  and by $q_K$ we mean the unique  quadratic form $q:V_K\rightarrow A_K$  such that $q(x\otimes k)= q(x)k^2$ for all $x\in V$ and $k\in K$.

Let $D$ be a central simple  $F$-division algebra with involution of the first kind $\theta$. For $a_1,\ldots,a_n\in D$, we denote by $\qf{a_1,\ldots,a_n}$ the quadratic form $(D^n,q)$ over $(D,\theta)$ where  $q:D^n\rightarrow D$ is given by $$(x_1,\ldots, x_n)\mapsto \sum_{i=1}^n \theta(x_i)a_ix_i +\Symd(D,\theta)\, .$$ 
We call such a form a {diagonal form}. We call a quadratic form {diagonalisable} if it is isometric to a diagonal form. 

We will use the following: 

\begin{lem}\label{lem:genqfs} Assume $F$ is of characteristic $2$, and $Q$ is an $F$-quaternion division algebra endowed with its canonical involution. 
Let 
 $u\in Q$ be such that $u^2+u=a\in F$, and consider the quadratic extension $K=F[\wp^{-1}(a)]=F[\alpha]$, with $\alpha^2+\alpha=u^2+u=a$.  
If a generalised quadratic form $(V,q)$ over $(Q,\can)$ is isotropic over $K$,  then $q$ represents $u\lambda$ for some $\lambda\in F^\times$. 
  \end{lem}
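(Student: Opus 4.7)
The plan is to extract, from an isotropic vector of $(V,q)$ over $K$, an element $y\in V$ whose $q$-value is essentially $\lambda u$. Write the $K$-isotropic vector uniquely as $x = x_1\otimes 1 + x_2\otimes \alpha$ with $x_1,x_2\in V$. Expanding $q_K(x) = q(x_1) + \alpha^2 q(x_2) + \alpha\, h(x_1,x_2)$ and using $\alpha^2 = a + \alpha$, the vanishing of $q_K(x)$ separates, in $Q/\Symd(Q,\can)$, into two conditions:
\[(\mathrm{I})\quad q(x_1) \equiv a\, q(x_2)\pmod{\Symd}, \qquad (\mathrm{II})\quad q(x_2) \equiv h(x_1,x_2)\pmod{\Symd}.\]

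Set $y := x_1 + x_2 u\in V$; the motivation is that $u\in Q$ plays the role of $\alpha\in K$, since $u^2 + u = \alpha^2 + \alpha = a$. Using (II), choose $r := h(x_1,x_2)\in Q$ as a representative of $q(x_2)$, so that $ar$ represents $q(x_1)$ by (I). Then in $Q/\Symd$,
\[q(y) = q(x_1) + \can(u)\, q(x_2)\, u + h(x_1,x_2)\, u \equiv ar + uru \pmod{\Symd},\]
using $\can(u)=1+u$. The key identity is $ar + uru \equiv (r_0 + r_1)\,u \pmod{F}$, where $r_0,r_1\in F$ are the coefficients of $1$ and $u$ when $r$ is expanded in a quaternion basis $\{1,u,v,uv\}$ of $Q$. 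This can be checked by direct calculation using $u^2=u+a$, $vu=(1+u)v$, and $uvu=av$; more conceptually, decomposing $Q = F[u] \oplus F[u]\,v$ and writing $r = s + tv$ with $s,t\in F[u]$, the $F[u]\,v$-components of $ar$ and of $uru$ both equal $atv$ and cancel in characteristic~$2$, leaving $ar+uru = su \in F[u]$, which reduces modulo $F$ to $(r_0+r_1)u$.

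To conclude, we may assume $(V,q)$ is anisotropic over $F$; otherwise a hyperbolic plane inside $(V,q)$ represents every element of $Q/\Symd$, in particular every $\lambda u$. Then $x_2 \neq 0$, since otherwise $x_1$ would already be $F$-isotropic. If $y\neq 0$, then anisotropy forces $q(y)\neq 0$ in $Q/\Symd$, so $r_0+r_1\neq 0$ and $y$ represents $\lambda u$ with $\lambda = r_0+r_1$. If $y=0$, then $x_1 = x_2 u$, whence $r = h(x_2 u,x_2) = \can(u)\,h(x_2,x_2) = (1+u)c$ with $c := h(x_2,x_2)\in F$; substituting into (II) gives $q(x_2)\equiv cu\pmod{\Symd}$, and anisotropy forces $c\neq 0$, so $x_2$ represents $cu$. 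The main obstacle is establishing the clean reduction $ar + uru \equiv (r_0+r_1)u \pmod{F}$; everything else is bookkeeping and case analysis.
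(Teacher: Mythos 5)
Your proof is correct and follows essentially the same route as the paper's: reduce to the anisotropic case, write the $K$-isotropic vector as $x_1\otimes 1 + x_2\otimes\alpha$, split $q_K(x)=0$ into the same two conditions modulo $\Symd$, and evaluate $q$ on $z := x_1 + x_2 u$. The one genuine difference in method is how you get $q(z)$ into $F[u]/F$: the paper observes that $\xi a + u\xi u$ commutes with $u$ for any representative $\xi$ of $q(x_2)$, hence lies in $F[u]$ because $Q$ is division; you instead compute $ar+uru \equiv (r_0+r_1)u \pmod{F}$ directly, both in coordinates and via the decomposition $Q = F[u]\oplus F[u]v$. Both are valid; the paper's centraliser argument is slightly shorter, while your computation produces the scalar $\lambda=r_0+r_1$ explicitly.

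You also handle a degenerate case that the paper passes over. If $z = x_1 + x_2 u = 0$, the anisotropy argument says nothing about $q(z)$, since then $q(z)=0$ trivially. You observe that in this case $r = h(x_1,x_2) = \can(u)h(x_2,x_2) = (1+u)c$ with $c=h(x_2,x_2)\in F$, so condition (II) gives $q(x_2)\equiv cu$, and anisotropy (applied to $x_2\neq 0$) forces $c\neq 0$, so $x_2$ represents $cu$. This is a welcome addition: the paper's proof, as written, implicitly assumes $z\neq 0$ without comment. Your case analysis closes that gap.
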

  
  \begin{remark}
This result also holds in characteristic not 2, by excellence of quadratic field extensions and~\cite[Prop. p.382]{QT-daca}.
\end{remark} 
  
\begin{proof}
If $q$ is isotropic, then it splits off a $2$-dimensional isotropic form  isomorphic to  $\qf{u,-u}$, as all $2$-dimensional isotropic planes are isomorphic (see \cite[(5.6.1)]{Knus:1991}).
We may therefore assume that $q$ is anisotropic. 
Recall $\Symd(Q,\can)=F$ and $\Symd(Q_K,\can)=K$, so that $q$ and $q_K$ respectively have values in $Q/F$ and $Q_K/K$.

By assumption, the generalised quadratic space $(V_K,q_K)$ is isotropic, that is there exists $(x,y)\in V^2$ such that 
\[q_K(x\otimes 1+y\otimes \alpha)= 0\in Q_K/K.\] 
On the other hand, we have 
\begin{eqnarray*}
q_K(x\otimes 1 + y\otimes \alpha) & =& q_K(x\otimes 1) + q_K(y\otimes \alpha) + h_K(x\otimes 1,y\otimes \alpha) \\ 
&=& q(x)\otimes 1  + q(y)\otimes\alpha^2   + h(x,y) \otimes\alpha \\
&=& \bigl(q(x) + q(y) a\bigr)\otimes 1 + \bigl(h(x,y) + q(y))\bigr )\otimes\alpha\,.
\end{eqnarray*}
Since $Q_K/K\simeq Q/F\otimes 1\oplus Q/F\otimes \alpha$, we get that 
\[q(x)+q(y)a= 0\in Q/F\mbox{ and }h(x,y)+q(y)=0\in Q/F.\]
Consider now the element $z=x+yu\in V$. We have 
\begin{eqnarray*}
q(z)&=&q(x)+q(yu)+h(x,yu)=q(x)+(u+1)q(y)u+h(x,y)u\\
&=&\bigl(q(x)+uq(y)u\bigr)+\bigl(q(y)+h(x,y)\bigr)u\\
&=&\bigl(q(y)a+uq(y)u\bigr)+\bigl(q(y)+h(x,y)\bigr)u\in Q/F
\end{eqnarray*}
Take any $\xi\in Q$ such that $q(y)\equiv \xi\mod F$. Note that $\xi\not\equiv 0\mod F$ by the ansotropy of $q$.
The quaternion $\xi a+u\xi u$ commutes with $u$, hence belongs to $F[u]\subset Q$. In addition, we have proved that $q(y)+h(x,y)= 0\in Q/F$. 
It follows that $q(z)\in F[u]/F\subset Q/F$ and this proves the lemma. 
 \end{proof}
With this in hand, we may now prove~\cref{lowindextriples}. 
\begin{proof}
If the characteristic of $F$ is different from $2$, then the result follows from \cite[(6.2)]{QT:Arason12}  and the uniqueness of the semi-trace in this case. The result for fields of characteristic $2$ is similar, but we include the details below for completeness. 

Let $(A,\sigma,f)$ be an algebra with quadratic pair. We assume $A$ has degree $8$, $(\sigma,f)$ has trivial discriminant and two of $A$, $\C^+(A,\sigma,f)$ and $C^-(A,\sigma,f)$ have index at most $2$. By triality, we may assume that $A$ and at least one component of the Clifford algebra of $(A,\sigma,f)$ has index at most $2$. We claim there exists quaternion algebras $Q_1,Q_2,Q_3$ and $Q_4$ such that 
\begin{equation}
\label{claim}
 (A,\sigma,f) \in \left(Q_1\otimes Q_2,\can\otimes\can, f_{12} \right)\boxplus \left(Q_3\otimes Q_4,\can\otimes\can, f_{34} \right)\,.
 \end{equation}
The theorem follows from this claim by \cref{3orthsums}. 

We first consider the case where $A$ is split. Then $(A,\sigma,f)\simeq \Ad_q$ for some nonsingular  quadratic form $q$ over $F$ with $\Delta(q)=0$ and $\mathrm{ind}(\C_0(q))\leqslant 2$.  Therefore by \cref{prop:8dim}  there exist $a_1,_2,a_3,a_4\in F^\times$ and a $2$-dimensional nonsingular quadratic form $\phi$ over $F$ such that $q\simeq \qf{a_1,a_2,a_3,a_4}^{bi}\otimes \phi$. In particular we may write $q\simeq  \qf{a_1,a_2}^{bi}\otimes \phi\perp  \qf{a_3,a_4}^{bi}\otimes \phi$. Since these summands are similar to $2$-fold Pfister forms,  taking 
the adjoint quadratic pair now gives the result.

Now assume $\mathrm{ind}(A)=2$ and let
 $Q$ be an $F$-quaternion  division algebra such that $A$ is Brauer equivalent to $Q$.  
We may choose an $F$-basis $(1,u,v,w)$ of $Q$ such that $u^2+u=a\in F$, $v^2=b\in F^\times$ and $w=uv=v(u+1)$ for some  $a\in F$  and $b\in F^\times$. 
To prove \eqref{claim} in this case, we use generalised quadratic forms. Recall one constructs the adjoint quadratic pair to a generalised quadratic form $q$ over $(D,\theta)$  in an analogous way to the construction of an adjoint quadratic pair to a quadratic form over a field (see \cite[p.372]{tignol:qfskewfield}). We denote this quadratic pair by $\Ad_{q}$. 
Moreover, by \cite[(1.5)]{tignol:qfskewfield}, there exists a nonsingular generalised quadratic form $q$ over $(Q,\can)$ such that $\Ad_q\simeq (A,\sigma,f)$. 
 Since this form is nonsingular, it is diagonalisable by \cite[(6.3)]{dolphin:singuni}, so  there exist $u_1,u_2,u_3, u_4\in Q$ such that $q\simeq \qf{u_1,u_2,u_3, u_4}$.  Further, since $q$ is nonsingular,  we have that  $\Trd_Q(u_i)\neq 0$ for $i=1,\ldots, 4$ by \cite[(7.5)]{dolphin:singuni}.  Note also, that since $q$ is a map to $Q/F$, we may assume each  $u_i$  is in the $F$-subvector space of $Q$ generated by $u,v$ and $w$. In particular, since the $u_i$ have non zero trace, 
via changing the basis of $Q$ if necessary, we may assume that $u_1=u$.

Let $K=F(\wp^{-1}(a))$, a separable quadratic extension. We have that $$(A,\sigma,f)\in \Ad_{\qf{u}}\boxplus \Ad_{\qf{u_2,u_3,u_4}}\,.$$
After extending scalars to $K$, $\Ad_{\qf{u}}$ becomes hyperbolic. It follows that $\Ad_{\qf{u_2,u_3,u_4}}$ becomes adjoint to a $6$-dimensional quadratic form with trivial discriminant, that is, an Albert form.  The even part of the Clifford algebra of this form is of index at most $2$, and hence this Albert form is isotropic (see \cite[(16.5)]{KMRT}). So \cref{lem:genqfs} applies, and we may assume that $u_2=u\lambda$ for some $\lambda\in F$. We get that the generalised quadratic form $q$ is 
$$q= \qf{u,u\lambda}\perp \qf{u_3,u_4}\,.$$
By \cite[(3.4)]{dolphin:totdecompsymp}, we have that 
$$\Ad_{\qf{u,u\lambda}} = \Ad_{\pff{\lambda}}\otimes \Ad_{\qf{u}}\,. $$
Further, since  we have that $$\mathrm{disc}(A,\sigma,f)=\mathrm{disc}(\Ad_{\pff{\lambda}}\otimes \Ad_{\qf{u}})=0$$
 and that $\mathrm{disc}$ is additive across orthogonal sums (see \cite[(7.14)]{KMRT}), it follows that  $\mathrm{disc}(\Ad_{\qf{u_3,u_4}})$ is also trivial.  Hence $\Ad_{\qf{u_3,u_4}}$ is  decomposable. Since any  decomposable quadratic pair on a biquaternion algebra is of the form $(H_1\otimes H_2,\can\otimes \can,f_\otimes)$ for some quaternion algebras $H_1$ and $H_2$,  by \cite[(15.12)]{KMRT}, we get \eqref{claim}, and this concludes the proof. 
\end{proof}

\section{Appendix: Canonical semi-trace on the full Clifford algebra of a quadratic form}

In this appendix, we will show how one can construct a canonical semi-trace on the full Clifford algebra of a quadratic form. This semi-trace will be closely related to the semi-trace constructed in \cref{sec:canqp}. If the field is of characteristic different from $2$, then the full Clifford algebra has a unique semi-trace if and only if the canonical involution is orthogonal. Therefore, throughout this section, we assume that $F$ is a field of characteristic $2$.

We first give a construction of the full Clifford algebra of a nonsingular quadratic form. For this, we use the following presentation of quaternion algebras. When the characteristic of $F$ is $2$, a quaternion algebra may be defined as $F$-algebra generated by two elements $r,s$  subject to $r^2,s^2\in F$  and $rs+sr=1$ . If $s^2\neq 0$, then $(1,sr,s,sr^2)$ is a quaternion basis of this algebra, in the sense of \S 2.1, and otherwise the algebra is split (see \cite[p.25]{KMRT}). 

\begin{ex}\label{fullcliff}
Let  $q$ be a nonsingular quadratic form over $F$ with polar form $b$. Pick a decomposition \[q \simeq [a_1,b_1]\perp\ldots\perp [a_m,b_m],\] and let $(e_i,e_i')_{1\leqslant i \leqslant m}$ be the corresponding symplectic basis of the underlying vector space $V$. That is,  for all $i$ with $1\leqslant i \leqslant m$, we have  $q(e_i)=a_i$, $q(e_i')=b_i$, $b(e_i,e'_i)=1$ and $b(e_i,e_j)=b(e'_i,e'_j)=b(e_i,e'_j)=0$ for all $i\neq j$. We may assume $a_i\not=0$ for all $i$. 

 The full Clifford algebra of $q$ is generated by the elements $\{e_i,e'_i\}_{1\leqslant i\leqslant m}$, subject to the following relations for all $i\in\{1,\ldots, m\}$: 
\[\mbox\ e_i^2=a_i,\ {e_i'}^2=b_i,\ e_ie_i'+e_i'e_i=1.\] 
In addition, any pair of elements in the basis other than $(e_i,e_i')$ commute. By definition, the elements $e_i$ and $e_i'$ are fixed under the canonical involution $\underline{\sigma_q}$ on $\C(q)$. Therefore, the pairs $(e_i,e_i')_{1\leqslant i \leqslant m}$ each generate pairwise commuting $\underline{\sigma_q}$-stable $F$-quaternion subalgebras of $\C(q)$, respectively isomorphic to $[a_ib_i,a_i)$, and $\underline{\sigma_q}$ restricts to the canonical involution on each of these quaternion subalgebras. In particular, the canonical involution on $\C(q)$ is always  symplectic. 
\end{ex}
 
\begin{prop}
Let $(V,q)$ be a nonsingular quadratic space of even dimension $2m\geq 6$. 
Given a pair $(e,e')\in V^2$ with $b_q(e,e')=1$, the map 
\[f:\Sym(\C(q),\underline{\sigma_q})\rightarrow F,\ \ x\mapsto \Trd_{\C(q)}(ee'x),\]
is a semi-trace and does not depend on the choice of $(e,e')$. 
\end{prop}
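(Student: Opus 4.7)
To show $f$ is a semi-trace, I would first observe that the element $\ell = ee' \in \C(q)$ satisfies $\ell + \underline{\sigma_q}(\ell) = ee' + e'e = b_q(e,e') = 1$, using that $\underline{\sigma_q}$ fixes $V$ pointwise together with the Clifford relation. Because $\underline{\sigma_q}$ is symplectic on the full Clifford algebra (see~\cref{fullcliff}), the standard construction recalled from~\cite[(5.7)]{KMRT} then immediately yields that $s \mapsto \Trd_{\C(q)}(\ell s)$ is a semi-trace on $(\C(q),\underline{\sigma_q})$.

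For the independence statement, given a second pair $(g,g')$ with $b_q(g,g')=1$, I need to show $u+v := ee' + gg' \in \Alt(\C(q),\underline{\sigma_q})$, so that both elements induce the same semi-trace. My plan is to fix a symplectic basis $(e_i,e_i')_{1\leq i\leq m}$ of $(V,b_q)$ with $(e_1,e_1')=(e,e')$, and to expand $gg'$ in the corresponding basis of $\C(q)$. Writing $g = \sum_i (\alpha_i e_i + \alpha_i'e_i')$ and $g' = \sum_i (\beta_i e_i + \beta_i'e_i')$, a direct computation gives $gg' = S + \sum_i c_i\,e_ie_i' + D$ where $S\in F$, $D$ is a sum of off-diagonal products $e_ie_j$, $e_ie_j'$ or $e_i'e_j'$ with $i\neq j$, and the ``diagonal coefficients'' satisfy $\sum_i c_i = b_q(g,g') = 1$. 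Consequently $u+v = S + \sum_i d_i\,e_ie_i' + D$ with $\sum_i d_i = 0$ in characteristic $2$, and it suffices to show each of the three summands lies in $\Alt(\C(q),\underline{\sigma_q})$.

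Scalars are trivially alternating via $c = c\,e_1e_1' + \underline{\sigma_q}(c\,e_1e_1')$, handling $S$. For $D$, each summand $ab$ with $a,b\in V$ orthogonal is symmetric (since $a$ and $b$ commute in $\C(q)$) and satisfies $ab = (ab\cdot hh') + \underline{\sigma_q}(ab\cdot hh')$ for any symplectic pair $(h,h')$ in $\langle a,b\rangle^\perp$ with $b_q(h,h')=1$; such a pair exists because $\langle a,b\rangle$ is totally $b_q$-isotropic, so $\langle a,b\rangle^\perp$ has dimension $2m-2\geq 4$ with a $2$-dimensional radical, leaving a nondegenerate part of dimension $2m-4\geq 2$. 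For the diagonal piece $\sum_i d_ie_ie_i'$, I exploit $\sum d_i = 0$ to rewrite it in characteristic $2$ as $\sum_{i\geq 2} d_i\,(e_ie_i' + e_1e_1')$; each $X_i := e_ie_i' + e_1e_1'$ is symmetric, commutes with a helper $w = e_ke_k'$ for some $k\notin\{1,i\}$ (which exists since $m\geq 3$), and hence equals $(X_iw) + \underline{\sigma_q}(X_iw)$, proving it alternating. The central role of the dimension bound $2m\geq 6$ is precisely in securing both the auxiliary symplectic pair $(h,h')$ in the off-diagonal case and the third index $k$ in the diagonal case; ensuring these helpers exist simultaneously for all configurations $(e,e')$ and $(g,g')$ is the main technical point of the argument.
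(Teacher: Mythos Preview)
Your argument is correct and handles all $m\geq 3$ uniformly. It differs from the paper's proof in an interesting way. The paper treats $m\geq 4$ by a geometric argument (as in \cref{split.rem}): given two symplectic pairs $(e,e')$ and $(g,g')$ spanning planes $P$ and $Q$, one finds a third nondegenerate plane $R\subset P^\perp\cap Q^\perp$ and uses the helper $w=hh'$ from a symplectic basis of $R$ to write $ee'+gg'$ directly as an alternating element. This fails for $m=3$ because $P^\perp\cap Q^\perp$ may be totally isotropic, and the paper then invokes Revoy's chain lemma to connect the two symplectic bases through a sequence sharing a common pair at each step.

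Your coordinate expansion avoids this dichotomy: by fixing one symplectic basis containing $(e,e')$ and decomposing $gg'$ into a scalar, a diagonal piece $\sum d_i e_ie_i'$ with $\sum d_i=0$, and off-diagonal monomials, you reduce to showing that each off-diagonal $ab$ and each $e_ie_i'+e_1e_1'$ is alternating --- precisely the computations carried out in the proof of \cref{lem:imc} for the even Clifford algebra, which only require a single auxiliary index $k\notin\{i,j\}$. The gain is that you never need a plane orthogonal to both $P$ and $Q$ simultaneously, so $m=3$ goes through without Revoy's result. The cost is a slightly longer explicit computation in place of the one-line identity $u=v+(u+v)w+\underline{\sigma_q}\bigl((u+v)w\bigr)$ used in the paper for $m\geq 4$.
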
 
We will refer to this semi-trace as the canonical semi-trace on the full Clifford algebra, and  use the same notation, $\underline{f_q}$, as for the canonical semi-trace on the even Clifford algebra. 
\begin{remark}
\begin{enumerate}
\item If $m$ is even, we may define $\underline{f_q}$ as the semi-trace on the full Clifford algebra $\C(q)$ induced by the canonical semi-trace on $\C_0(q)$, in the sense of \S~\ref{prod.section}. 
Note though that restricting the canonical semi-trace of $\C(q)$, viewed as a map, to the even part $\Sym(\C_0(q),\underline{\sigma_q})$ does not produce a semi-trace, since the values are in $F$, while the centre of $\C_0(q)$ is a quadratic \'etale extension of $F$. 
\item Since the involution $\underline{\sigma_q}$ on $\C(q)$ always is symplectic, $m$ need not be even here, so this remark is not enough to prove the proposition. 
\end{enumerate}
\end{remark}
\begin{proof}

Let $(e,e')\in V^2$ be two vectors such that $b_q(e,e')=1$. One computes that  the element $u=ee'\in\C(q)$ satisfies $u+\underline{\sigma_q}(u)=1$, hence it determines a semi-trace $f_{e,e'}$ on $\C(q)$. 

If $m\geq 4$, the same computations as in \cref{split.rem} shows that $f_{e,e'}$ does not depend on the choice of $(e,e')$. If $m=3$ we may argue as follows. There exists a symplectic base $\varepsilon=(e,e',e_2,e_2',e_3,e_3').$ The computation at the end of \cref{split.rem} shows that $f_{e,e'}=f_{e_2,e_2'}=f_{e_3,e_3'}$. 
Consider now another pair of vectors $(g,g')$ with $b_q(g,g')=1$ which is part of a symplectic base $\delta=(g,g',g_2,g_2',g_3,g_3')$. 
By Revoy's Proposition~\cite[Prop. 3]{revoy:chainlemma}, there is a chain of symplectic bases $\eta_1=\varepsilon,\eta_2,\dots,\eta_r=\delta$ of $(V,q)$ such that for all $i$, $1\leq i\leq r-1$, the bases $\eta_i$ and $\eta_{i+1}$ have a common symplectic pair. Hence, again by the computations of \cref{split.rem}, the semi-traces $f_{\eta_i}$ and $f_{\eta_{i+1}}$ coincide, and this concludes the proof. 
\end{proof}

\begin{remark}\label{rem:fullcliffcan} Using \cref{fullcliff} and 
 the same arguments as in the proof of \cref{splitdec.prop}, we see that if $q=[a_1,b_1]\perp\dots\perp[a_m,b_m]$, with $m\geq 3$, 
then 
\[\bigr(\C(q),\underline{\sigma_q},\underline{f_q}\bigl)\simeq \bigl([a_1b_1,a_1)\otimes\dots\otimes[a_mb_m,a_m),\can\otimes\dots\otimes\can,f_\otimes\bigl).\]
In particular, two decompositions of the algebra with involution $(\C(q),\underline{\sigma_q})$ arising from two different presentations of $q$ give rise to the same canonical semi-trace. Compare with \cref{differentdecomp}. 
\end{remark}

\begin{cor}
Let $q$ be a nonsingular  quadratic form over $F$.
If $q$ is isotropic then  $(\C(q),\underline{\sigma_q},\underline{f_q})$ is hyperbolic.
\end{cor}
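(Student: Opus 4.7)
The plan is to copy the strategy of the earlier \cref{isohypo}, but use \cref{rem:fullcliffcan} in place of \cref{splitdec.prop} so that we work with the full Clifford algebra rather than its even part. Since the canonical semi-trace $\underline{f_q}$ on $\C(q)$ has been defined only for quadratic spaces of dimension $2m\geq 6$, I implicitly assume $m\geq 3$, which is exactly what is needed to invoke \cref{rem:fullcliffcan}.

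As $q$ is nonsingular and isotropic, it splits off a hyperbolic plane, so I may choose a presentation $q\simeq [1,0]\perp [a_2,b_2]\perp\ldots\perp [a_m,b_m]$. Feeding $a_1=1$, $b_1=0$ into \cref{rem:fullcliffcan} yields an isomorphism of algebras with quadratic pairs
\[\bigl(\C(q),\underline{\sigma_q},\underline{f_q}\bigr)\simeq \bigl([0,1)\otimes [a_2b_2,a_2)\otimes\ldots\otimes [a_mb_m,a_m),\can\otimes\ldots\otimes\can,f_\otimes\bigr).\]
By \cref{part}, the canonical semi-trace $f_\otimes$ coincides with the semi-trace induced by an \emph{arbitrary} semi-trace on the first tensor factor $\bigl([0,1),\can\bigr)$. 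Because this factor is a split quaternion algebra, I may choose a semi-trace $f$ on it so that $([0,1),\can,f)\simeq \Ad_\HH$, i.e.\ adjoint to the hyperbolic plane. Setting $(B,\tau)=\bigotimes_{i=2}^m\bigl([a_ib_i,a_i),\can\bigr)$, this identifies
\[\bigl(\C(q),\underline{\sigma_q},\underline{f_q}\bigr)\simeq \Ad_\HH\otimes (B,\tau).\]

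From here I would invoke the standard fact that the tensor product of a hyperbolic algebra with quadratic pair and an arbitrary algebra with involution is again hyperbolic (see~\cite[\S 6.B]{KMRT}), which immediately gives the conclusion. The main obstacle is really the bookkeeping already carried out for \cref{isohypo}, namely checking that \cref{part} indeed allows one to push all the ambiguity in the semi-trace onto the split factor; once this is noted, the argument is essentially forced, and hyperbolicity of the split quaternion piece propagates through the tensor product.
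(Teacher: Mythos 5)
Your proposal is correct and takes essentially the same approach as the paper's proof, which simply cites \cref{rem:fullcliffcan} together with an argument "similar to \cref{isohypo}"; you have spelled out precisely what that one-liner means, choosing the presentation $q\simeq[1,0]\perp q'$, identifying the first quaternion factor $[0,1)$ as split, and using \cref{part} to load all the flexibility in the semi-trace onto that split factor so it becomes $\Ad_\HH$.
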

\begin{proof} 
This follows from \cref{rem:fullcliffcan} using a similar argument as to that in \cref{isohypo}.
\end{proof}

\end{document}